\documentclass[12pt]{amsart}
\usepackage[colorlinks=true,citecolor=green,linkcolor=magenta]{hyperref}
\usepackage{amsmath}
\usepackage{verbatim}
\usepackage{cleveref}
\usepackage{amsfonts}
\usepackage{amssymb}
\usepackage{blkarray}
\usepackage{color,colortbl}
\usepackage[all,cmtip]{xy}  
\usepackage{enumerate}
\usepackage[top=1in, bottom=1in, left=1in, right=1in]{geometry}
\usepackage{mathrsfs}

\usepackage{stmaryrd}
\usepackage{bbold}
\renewcommand{\k}{\mathbb{k}}
\usepackage{cleveref}
\usepackage{soul}
\usepackage{arydshln}
\usepackage{tikz-cd}
\setcounter{MaxMatrixCols}{20}
\usepackage{graphicx}
\theoremstyle{definition}
\newtheorem{theorem}{Theorem}[section]


\numberwithin{equation}{section}

\newtheorem{cor}[theorem]{Corollary}
\newtheorem{lemma}[theorem]{Lemma}
\newtheorem{proposition}[theorem]{Proposition}

\newtheorem{notation}[theorem]{Notation}
\newtheorem{observation}[theorem]{Observation}

\theoremstyle{definition}
\newtheorem{definition}[theorem]{Definition}
\newtheorem{setting}[theorem]{Setting}
\newtheorem{example}[theorem]{Example}


\newtheoremstyle{TheoremNum}
        {8pt}{8pt}              
        {\upshape}                      
        {}                              
        {\bfseries}                     
        {.}                             
        {.5em}                             
        {\theoremname{#1}\theoremnote{ \bfseries #3}}
  \theoremstyle{TheoremNum}



\newcommand{\m}{\mathfrak{m}}


\newcommand{\lcm}{\operatorname{lcm}}

\newcommand{\spc}{\operatorname{Spec}}

\newcommand{\inn}{\operatorname{in}}

\newcommand{\kk}{\mathbf{k}}
\newcommand{\rees}{\mathcal{R}}

\newcommand{\Supp}{\operatorname{Supp}}

\newcommand{\Ass}{\operatorname{Ass}}

\newcommand{\Min}{\operatorname{Min}}

\newcommand{\hgt}{\operatorname{ht}}	
\newcommand{\grade}{\operatorname{grade}}

\newcommand{\Sym}{\operatorname{Sym}}

\DeclareMathOperator{\fitt}{Fitt}

\newcommand{\Gs}[1]{\rm (G_#1)}





%

\renewcommand{\leq}{\leqslant}
\renewcommand{\geq}{\geqslant}

\newcommand{\p}{\mathfrak{p}}

\newcommand{\ann}{\operatorname{ann}}

\renewcommand{\a}{\mathfrak{a}}


\title{Rees algebra and almost linearly presented ideals in three variables}

\address{Department of Mathematics, Indian Institute of Technology Delhi}
\author[Kumar]{Suraj Kumar}
\email{Suraj.Kumar@maths.iitd.ac.in}

\subjclass[2020]{13A30,13H10,13H15,13C14}
\keywords{Defining ideal of the Rees algebra, iterated Jacobian Dual,almost linear presentation, $\Gs{{d-1}}$ condition}

\begin{document}
\maketitle
\begin{abstract}
    Let $R=\k[x,y,z]$ and $I=(f_0,\dots,f_{n-1})$ be a height two perfect ideal which is almost linearly presented (that is, all but the last column have linear entries, but the last column has entries which are homogeneous of degree $2$). Further we suppose that after modulo an ideal generated by two variables, the presentation matrix has rank one. Also, the ideal $I$ satisfies $\Gs{{2}}$ but not $\Gs{3}$, then we obtain explicit formulas for the defining ideal of the Rees algebra $\rees(I)$ of $I$. 
\end{abstract}
\section{Introduction}
The Rees algebra of an ideal is a fundamental object in commutative algebra and algebraic geometry, capturing both algebraic and geometric information about the ideal and its associated blow-up. Let $R$ be a Noetherian ring and $I \subset R$ a finitely generated ideal. The Rees algebra of $I$, denoted $\rees(I)$, is defined as
$$\rees(I)=R[It]=R[f_1t,\dots,f_nt] \subset R[t],$$
where $f_1,\dots,f_n$ generates $I$. Since $I$ is finitely generated, the Rees algebra can be viewed as the image of the surjective graded homomorphism
$$\Psi: R[w_1,\dots,w_n] \rightarrow \rees(I)$$
where $\Psi(w_i)=f_it, 1 \leq i \leq n$. The kernel $\mathcal{A} :=\ker \Psi$, known as the defining ideal of the Rees algebra, is a central object of study.

A compelling connection to computational geometry arises when considering the moving curve ideal (see \cite{Cox08}), first studied in the context of computer-aided geometric design. Suppose $R=\k[x_1,x_2]$, and let $a,b,c \in R$ be homogeneous polynomials of the same degree with $\gcd(a,b,c)=1$. The rational map
$$(x_1:x_2) \rightarrow (a(x_1,x_2):b(x_1,x_2):c(x_1,x_2))$$
parametrizes a rational plane curve in $P^2$. A moving curve of degree $m$ is a polynomial in $R[x,y,z]$ of the form
$$\sum_{i+j+k=m} A_{ijk}(x_1,x_2) x^i y^j z^k,$$
which follows the parametrization if 
$$\sum_{i+j+k=m} A_{ijk}(x_1,x_2) a(x_1,x_2)^i b(x_1,x_2)^j c(x_1,x_2)^k = 0.$$
The collection of all such polynomials forms an ideal in $R[x,y,z]$, known as the moving curve ideal, denoted $MC$. Remarkably, this ideal coincides with the defining ideal $\mathcal{A}$
of the Rees algebra $\rees(I)$, thereby bridging computational and theoretical perspectives.

In general, when $R=\k[x_1,\dots,x_d]$ is a polynomial ring over an algebraically closed field $\k$, and the generators $f_i$ are homogeneous of same degree, the rational map
$$\psi: P_{\k}^{d-1} \rightarrow P_{\k}^{n-1}, (x_1:\dots:x_d) \rightarrow (f_1:\dots:f_n)$$
defines a projective subvariety via its closed image. Finding the defining equations of this image is the classical implicitization problem (\cite{CortadellasD'Andrea14,CoxGoldmanZhang00,Cox08}), and the Rees algebra serves as the bihomogeneous coordinate ring of the graph of $\psi$. From this perspective, the defining ideal $\mathcal{A}$ captures the implicit equations of the image of $\psi$.

From the algebraic standpoint, the symmetric algebra $\Sym(I) \cong \frac{R[w_1,\dots,w_n]}{\mathcal{L}}$, provides an approximation of the Rees algebra, where $\mathcal{L}=([w_1\cdots w_n] \cdot \varphi)$, with $\varphi$ being a presentation matrix of $I$. Since $\Psi(\mathcal{L})=0$, we have $\mathcal{L} \subseteq \mathcal{A}$, and hence the Rees algebra fits into the short exact sequence
$$0 \rightarrow \frac{\mathcal{A}}{\mathcal{L}} \rightarrow \Sym(I) \rightarrow \rees(I) \rightarrow 0.$$
When $\mathcal{A}=\mathcal{L}$, we say that $I$ is of linear type, and the defining equation the Rees algebra are completely determined by $\varphi$. Characterizing when ideals are of linear type has been a central question (see for example \cite{micali,huneke_dsequences,Huneke82}), with foundational results
by Herzog-Simis-Vasconcelos \cite{HSV83}. One of the condition in \cite{HSV83} is,
\begin{align}\label{G infinity}
\mu(I_\p)\leq\hgt \p\text{ for all }\p\in V(I).
\end{align}
A generalization of this, due to Artin-Nagata \cite{ArtinNagata72}, is the $\Gs{s}$ condition
\begin{align*}
    \Gs{{s}}: \mu(I_\p)\leq \hgt \p\text{ for all }\p\in V(I)\text{ and }\hgt\p\leq s-1.
\end{align*}
 Of course, \eqref{G infinity} is equivalent to saying that $\Gs{{s}}$ is satisfied for every $s$. Ideals satisfying the $\Gs{d}$ condition and possessing favorable homological properties- such as being strongly Cohen-Macaulay (see \cite{Huneke83} for definition) have well behaved Rees algebras (for example see {\cite{UlrichVasconcelos93,Morey-Ulrich,BoswellMukundan16}}). Despite significant progress, the structure of $\mathcal{A}$ remains elusive in many cases where $I$ is not of linear type. Some of the generators of $\mathcal{A}$ can be obtained from the Jacobian dual of $\varphi$, a matrix $B(\varphi)$ satisfying the identity
$$[w_1\cdots w_n] \cdot \varphi = [x_1 \cdots x_d] \cdot B(\varphi),$$
and minors of $B(\varphi)$ often yield additional generators of $\mathcal{A} \backslash {\mathcal{L}}$.
After restricting the  presentation matrix $\varphi$ of $I$ to be almost linearly, that is, all but last column of $\varphi$ are linear and last column consist of homogeneous entries of arbitrary degree $m\geq 2$ in \cite{Morey-Ulrich} shows that the explicit characterization of generators of $\mathcal{A}$ other than $\mathcal{L}$ leads to the concept of iterated Jacobian dual and those extra generators of $\mathcal{A}$ is exactly the maximal minors of iterated Jacobian dual(\cite{BoswellMukundan16}).
Since then, there are many other classes of ideals for which the defining ideal of the Rees algebra is known (see \cite{Buse09,CortadellasD'Andrea14,CoxHoffmanWang,HongSimisVasconcelos,KPU11,KPU17,Weaver23,KimMukundan20,SurajMukundan24,SurajMukundan25}).

The height two perfect ideals satisfying the $\Gs{{d-1}}$ condition has also been investigated in \cite{Lan14,Lan17,DoriaRamosSimis18,ConstantiniPriceWeaver23}. Even in these cases the minors of Jacobian dual matrix appear as part of generators of $\mathcal{A}$. But the Jacobian dual in these cases, is written with respect to $(x_1,\dots,x_dw_1)$ instead of $(x_1,\dots,x_d)$.

In this article, we explore a significant and largely unexplored class of following ideals. The corresponding defining equation are computed explicitly.

\begin{setting}\label{Main Setting} Let $R=\k[x,y,z]$ be a polynomial ring over algebraically closed field $\k$, with the homogeneous maximal ideal $\m=(x,y,z)$. Let $I$ be an $R$ ideal satisfying:
\begin{itemize}
    \item $I$ is height two perfect ideal;
    \item $I=(f_1,\dots,f_n)$ is generated by homogeneous elements of same degree in $R$;
    \item The presentation matrix $\varphi$ of $I$ is $n \times (n-1)$, almost linearly presented, meaning all but the last column are linear and entries in the last column are homogeneous of degree 2;
    \item $I_1(\varphi)=\m$, and $\mu(I)=n >4$;
    \item after a change of variables, the matrix $\varphi$ has rank at most 1 modulo the prime ideal generated by any two variable;
    \item $I$ satisfies the condition $\Gs{2}$, but not $\Gs{3}$.
\end{itemize}
\end{setting}
Leveraging the rank condition on $\varphi$ modulo primes generated by two variables, we classify such matrices into three distinct types- Case I, Case II, and Case III-each leading to different structure for the defining ideal $\mathcal{A}$. For each case, we compute the generators of $\mathcal{A}$ explicitly, thereby shedding light on the implicit equations governing the associated rational map and contributing to the effort of understanding Rees algebra of almost linearly presented ideals.

And our main result is as follows.

\begin{theorem}({\Cref{another characterization of defining ideal of Rees algebra}, \Cref{equality of D' and A in case iii}, \Cref{equality of D and A in case iii}}).
Let $I$ be ideal in \Cref{Main Setting}. Consider the ring $S=\k[\underline{x}, \underline{w}]$.
\begin{enumerate}
    \item Let $B=S/{\mathcal{J}}$, where $\mathcal{J}=(l_1,\dots,l_{n-2}):(x,y,zw_0)$ is the Cohen-Macaulay prime ideal.
    \begin{enumerate}
        \item If the presentation matrix $\varphi$ is in Case I, then in the ring $B$,
        \begin{align*}
            \overline{\mathcal{A}} \cong \frac{\overline{l_{n-1}}\overline{\mathcal{K}}^{(2)}}{\overline{z^2w_0^2}},
        \end{align*}
        where $\overline{\mathcal{K}}=(\overline{zw_0}):\overline{(x,y,zw_0)}$, is height one Cohen-Macaulay ideal of the ring $B$.
        \item If the presentation matrix $\varphi$ is in Case II, then in the ring $B$,
        \begin{align*}
         \overline{\mathcal{A}} \cong \frac{\overline{l_{n-1}}\mathcal{K}'}{\overline{z^2w_0}},   
        \end{align*}
      where $\mathcal{K}'=(\overline{z^2w_0}):\overline{(x,y,zw_0)}^2$, is height one ideal of $B$.  
    \end{enumerate}
    \item If the presentation matrix $\varphi$ is in Case III, define the ring $B=S/{\mathcal{J}}$, where $\mathcal{J}=(l_1,\dots,l_{n-2}):(x,y)$ is the Cohen-Macaulay prime ideal. Then in the ring $B$,
    \begin{align*}
        \overline{\mathcal{A}} \cong \frac{\overline{l_{n-1}}\overline{\mathcal{K}}''}{\overline{z^2w_0}},
    \end{align*}
    where $\overline{\mathcal{K}}''=(\overline{z^2w_0}):\overline{(x,y,z^2w_0)}$, is height one Cohen-Macaulay ideal of the ring $B$.
\end{enumerate}
\end{theorem}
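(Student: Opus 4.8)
The plan is to trade the global computation of the defining ideal $\mathcal{A}\subset S=\k[\underline{x},\underline{w}]$ for a height-one ideal-theoretic computation inside the Cohen--Macaulay domain $B=S/\mathcal{J}$. The conceptual reduction is the observation that, since $\mathcal{J}\subseteq\mathcal{A}$, one has $B/\overline{\mathcal{A}}=(S/\mathcal{J})/(\mathcal{A}/\mathcal{J})=S/\mathcal{A}=\rees(I)$; thus identifying the image $\overline{\mathcal{A}}=\mathcal{A}B$ is equivalent to identifying $\mathcal{A}$ itself, but after the ``linear-type part'' $\mathcal{J}$ has been divided out. Writing $[w_0\cdots w_{n-1}]\cdot\varphi=[l_1\cdots l_{n-1}]$, so that $l_1,\dots,l_{n-2}$ are bilinear and $l_{n-1}$ is quadratic in $\underline{x}$, the containment $\mathcal{L}=(l_1,\dots,l_{n-1})\subseteq\mathcal{A}$ is automatic, and $\mathcal{J}\subseteq\mathcal{A}$ follows because $\mathcal{A}$ is prime and misses $(x,y,zw_0)$ (resp. $(x,y)$): indeed $zw_0\notin\mathcal{A}$ since $\Psi(zw_0)=zf_0t\neq0$, so anything in the colon $(l_1,\dots,l_{n-2}):(x,y,zw_0)$ already lies in the prime $\mathcal{A}$. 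I would first nail down the ambient facts that make $B$ usable, namely that $\mathcal{J}$ is a Cohen--Macaulay prime of height $n-2$, using the rank-at-most-one hypothesis on $\varphi$ modulo two variables to control the variety cut out by the bilinear forms; a dimension count then gives $\hgt\overline{\mathcal{A}}=(n-1)-(n-2)=1$, matching the asserted heights of $\overline{\mathcal{K}},\mathcal{K}',\overline{\mathcal{K}}''$.

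The heart of the argument is to realize the isomorphism through multiplication by the fraction-field element $\theta=\overline{l_{n-1}}/\overline{z^2w_0^2}$ (resp. $\overline{l_{n-1}}/\overline{z^2w_0}$), which singles out $\overline{l_{n-1}}$ as the carrier of the quadratic last column. I would show that $\theta$ maps the relevant colon ideal isomorphically onto $\overline{\mathcal{A}}$, so that the content of the theorem is the fraction-field equality $\overline{\mathcal{A}}=\theta\cdot\overline{\mathcal{K}}^{(2)}$ and its analogues. The easy inclusion, $\theta\cdot\overline{\mathcal{K}}^{(2)}\subseteq\overline{\mathcal{A}}$, amounts to checking, for each $k$ in the numerator ideal, first that $\overline{z^2w_0^2}$ divides $\overline{l_{n-1}}\,k$ in $B$, and then that the resulting class $\overline{l_{n-1}}\,k/\overline{z^2w_0^2}$ is killed by the induced map $B\to\rees(I)$, equivalently that it lies in $\overline{\mathcal{A}}$; this reduces to a syzygetic identity among the $f_i$ encoded by the columns of $\varphi$, the defining colon condition being exactly what produces the cancellation by $z^2w_0^2$. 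This is also where the trichotomy originates: the precise shape of the last column of $\varphi$ modulo $(x,y)$ determines whether the cancelling factor is $z^2w_0^2$ or $z^2w_0$, and whether the numerator is forced to be the symbolic square $\overline{\mathcal{K}}^{(2)}$, the colon $(\overline{z^2w_0}):\overline{(x,y,zw_0)}^2$, or $(\overline{z^2w_0}):\overline{(x,y,z^2w_0)}$.

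The step I expect to be the genuine obstacle is the reverse inclusion $\overline{\mathcal{A}}\subseteq\theta\cdot\overline{\mathcal{K}}^{(2)}$, that is, that there are no further generators beyond those constructed. Here I would exploit the reduction of the first paragraph: the candidate fractional ideal $\mathcal{C}=\theta\cdot\overline{\mathcal{K}}^{(2)}$ is, by the theorem's own assertion, a height-one Cohen--Macaulay (hence unmixed) ideal satisfying $\mathcal{C}\subseteq\overline{\mathcal{A}}$, and to upgrade this to equality I would either compare the two ideals after localizing at every height-one prime of $B$ and invoke unmixedness, or, more robustly, show directly that $B/\mathcal{C}$ is a domain of dimension $d+1=4$, so that the surjection $B/\mathcal{C}\onto B/\overline{\mathcal{A}}=\rees(I)$ has kernel a height-zero prime and hence vanishes. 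Carrying this out honestly requires a firm grip on the colon ideals themselves, and the most delicate point is separating Case I from Case II: one must decide whether the numerator is the symbolic square $\overline{\mathcal{K}}^{(2)}$ or the ordinary colon $(\overline{z^2w_0}):\overline{(x,y,zw_0)}^2$, which I would settle by computing the primary decompositions of $\overline{\mathcal{K}}$ and of its square in $B$ and reading off, from the explicit quadratic entries of $\varphi$, which colon ideal is actually produced by the cancellation of the previous paragraph.

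Finally, since the three cases are mutually exclusive and exhaustive under the rank hypothesis of the Setting, establishing the fraction-field equality in each case separately --- Cases I and II inside $B=S/\big((l_1,\dots,l_{n-2}):(x,y,zw_0)\big)$ and Case III inside $B=S/\big((l_1,\dots,l_{n-2}):(x,y)\big)$ --- and assembling them yields the three stated isomorphisms, completing the proof.
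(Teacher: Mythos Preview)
Your overall architecture matches the paper's: pass to $B=S/\mathcal{J}$, establish the easy inclusion $\mathcal{D}=\theta\cdot\overline{\mathcal{K}}^{(2)}\subseteq\overline{\mathcal{A}}$, observe that $\mathcal{D}\cong\overline{\mathcal{K}}^{(2)}$ is unmixed of height one, and then check $\mathcal{D}_P=\overline{\mathcal{A}}_P$ at each $P\in\Ass(\mathcal{D})$. That is exactly the route the paper takes, so your first option for the reverse inclusion is the right one; your second option (proving $B/\mathcal{D}$ is a domain directly) would be much harder without generators of $\mathcal{D}$ in hand, and the paper does not attempt it.

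The genuine gap is that you do not say \emph{how} the local comparison goes through, and two specific ingredients you have not identified are what make it work. First, one needs a bounded saturation degree, namely $\mathcal{A}=\mathcal{L}:(x,y,zw_0)^2$ rather than merely the saturation; this yields $\overline{\mathcal{A}}_P=(\overline{l_{n-1}})_P:\overline{(x,y,zw_0)}^{(2)}_P$ in $B$. Second, one needs the reverse linkage relation
\[
\overline{(x,y,zw_0)}^{(j)}=\overline{(zw_0)}^{\,j}:\overline{\mathcal{K}}^{(j)}\qquad(j=1,2),
\]
the ``other half'' of the colon that defines $\overline{\mathcal{K}}$. With these in hand the local argument runs as follows: if $P$ contains $\overline{(x,y,z)}$ or $\overline{(x,y,w_0)}$ then $\overline{\mathcal{K}}\not\subseteq P$, so linkage gives $\overline{(x,y,zw_0)}^{(2)}_P=\overline{(zw_0)}^{\,2}_P$, and the saturation description forces $(\overline{l_{n-1}})_P=\overline{(zw_0)}^{\,2}_P$, whence $\mathcal{D}_P=B_P=\overline{\mathcal{A}}_P$; at the remaining height-one primes $I$ is locally of linear type, so $\overline{\mathcal{A}}_P=(\overline{l_{n-1}})_P\subseteq\mathcal{D}_P$. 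Without the finite saturation exponent and the mutual linkage, your proposed local comparison has no mechanism.

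One smaller correction: separating Case~I from Case~II does not require primary decompositions of $\overline{\mathcal{K}}$. The split is already made in the Setting by whether $l_{n-1}\in(x,y,zw_0)^2$, and it is precisely this membership (together with the linkage above) that guarantees $\overline{l_{n-1}}\,\overline{\mathcal{K}}^{(2)}\subseteq(\overline{zw_0})^2$, i.e.\ that $\mathcal{D}$ is a well-defined ideal of $B$ in Case~I; in Case~II the failure of this containment is exactly why the denominator drops to $\overline{z^2w_0}$ and the numerator becomes $\mathcal{K}'$.
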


We compute $\overline{\mathcal{K}}^{(2)}$ and $\mathcal{K}'$ using the Grobner basis technique. It has a complete description in \Cref{w_0 is regular over ideal}, \Cref{subcase two}, \Cref{subcase three}, \Cref{subcase four} and \Cref{subcase five}. Also, the description of $\overline{\mathcal{K}}''$ is in \Cref{K in case III}.

The paper is organized as follows. In \Cref{section 2 and tools}, we present all the tools required in the sequel. In \Cref{section 3 and almost linearly presented matrix}, we present different forms of the presentation matrix (\Cref{presentation matrix}) and also show that minimal primes of $I_2(\varphi)$ is unique (\Cref{minimal prime is unique}), and prove the general form of defining ideal $\mathcal{A}$ of Rees algebra (\Cref{colon ideal related to defining ideal of Rees algebra}). In \Cref{section 4 and rees algebra in case i}, we show the defining ideal of the Rees algebra $\mathcal{A} \cong \overline{\mathcal{K}}^{(2)}$ for Case I in \cref{another characterization of defining ideal of Rees algebra} and we also give the explicit generators of $\overline{\mathcal{K}}^{(2)}$, using the Grobner basis technique, in \Cref{w_0 is regular over ideal}, \Cref{subcase two}, \Cref{subcase three} and \Cref{subcase four}. In \Cref{section 5 and rees algebra in Case ii}, we compute the explicit generators of $\mathcal{K}'$, using the Grobner basis technique in \Cref{subcase five}. We also show the defining ideal of Rees algebra $\overline{\mathcal{A}} \cong \mathcal{K}'$ for Case II in \Cref{equality of D' and A in case iii}. In \Cref{section 6 and rees algebra in case iii}, we give the complete description of $\overline{\mathcal{K}}''$ in \Cref{K in case III}. We also prove the defining ideal of Rees algebra $\overline{\mathcal{A}} \cong \overline{\mathcal{K}}''$ in \Cref{equality of D and A in case iii}. In \Cref{section 7 and examples}, we present the examples \Cref{example 1}, \Cref{example 2} and \Cref{example 3}.

\section{Preliminaries}\label{section 2 and tools}
\subsection{Fitting Ideals and $\Gs{s}$ condition}
\begin{definition}
Let $R$ be a ring and $I$ be an $R$-ideal. Let $s\in \mathbb{N}$, an ideal $I$ is said to satisfy the $\Gs{s}$ condition if $\mu(I_Q) \leq \hgt{Q}$ for all $Q \in V(I)$ with $\hgt{Q} \leq s-1$. 
\end{definition}
We also say that an ideal $I$ is said to satisfy $\Gs{\infty}$ if $I$ satisfies $\Gs{s}$ for every $s \in \mathbb{N}$.

Often we need a more practical definition of the $\Gs{s}$ condition. One such condition is the following: If $\varphi$ is a presentation matrix of the ideal $I$, then the condition $\Gs{s}$ is related to the height of the ideal of minors of $\varphi$.

\begin{definition}
 Let $\varphi$ be $m \times n$ matrix with entries in $R$ and $M$ be the cokernel of the linear map $\varphi : R^n \rightarrow R^m$. The $i^{th}$ Fitting ideal of $M$, 
 \begin{align*}
\fitt_i(M)= I_{m-i}(\varphi), 0 \leq i \leq \min\{m,n\},
 \end{align*}
 where $I_i(\varphi)$ is ideal generated by  $i \times i $ minors of $\varphi$. Also, $I_i(\varphi)= 0, i > \min\{m,n\}$ and $I_i(\varphi)=R, i \leq 0$. It is well known that the Fitting ideal depends only on the module $M$ and is independent of the presentation matrix $\varphi$.   
\end{definition}
Since the $\Gs{s}$ condition is a question of checking the number of generators of the ideal locally at a prime ideal, the following result is useful to check when it is not satisfied.
\begin{proposition}\cite[Proposition 20.6]{eisenbud_book}\label{NASC on fitting ideal} When $R$ is local, we have $\fitt_i(M)=R$ if and only if $\mu(M) \leq i$. In fact, $V(\fitt_i(M))=\{\p \in \spc(R) ;  \mu(M_\p) > i\}$.
\end{proposition}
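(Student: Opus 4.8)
The plan is to prove both assertions by passing to a minimal free presentation of $M$ and exploiting the invariance of Fitting ideals under change of presentation and under localization. First I would invoke the already-noted independence of $\fitt_i(M)$ from the chosen presentation to replace the given $\varphi$ by a \emph{minimal} presentation $R^{s} \xrightarrow{\psi} R^{r} \to M \to 0$, where $r=\mu(M)$ is the minimal number of generators of $M$. The crucial structural input is that, because $R$ is local and the presentation is minimal, every entry of $\psi$ lies in the maximal ideal $\m$: this follows from Nakayama's lemma, since a relation matrix with a unit entry could be used to eliminate one generator, contradicting minimality of the chosen generating set.

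With this in hand, the first statement reduces to a short case analysis. By definition $\fitt_i(M)=I_{r-i}(\psi)$. If $i\geq r=\mu(M)$, then $r-i\leq 0$ and the convention $I_{j}(\psi)=R$ for $j\leq 0$ forces $\fitt_i(M)=R$ at once. Conversely, if $i< r$, then $r-i\geq 1$, so $I_{r-i}(\psi)$ is generated by minors of size at least one, each of which is the determinant of a matrix all of whose entries lie in $\m$; hence $\fitt_i(M)\subseteq\m\subsetneq R$. Combining the two cases yields $\fitt_i(M)=R$ if and only if $\mu(M)\leq i$.

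For the ``in fact'' clause I would globalize via localization. Since localization is exact, localizing the presentation of $M$ at a prime $\p$ produces a presentation of $M_\p$ over $R_\p$, and the minors of the localized matrix are exactly the images of the minors of $\psi$; consequently $\fitt_i(M)_\p=\fitt_i(M_\p)$ inside $R_\p$. For $\p\in\spc(R)$ one then has $\p\in V(\fitt_i(M))$ precisely when $\fitt_i(M)\subseteq\p$, equivalently when $\fitt_i(M)_\p\neq R_\p$, that is, $\fitt_i(M_\p)\neq R_\p$. Applying the first statement to the local ring $R_\p$ and the module $M_\p$ shows this is equivalent to $\mu(M_\p)>i$, which is exactly the asserted description of $V(\fitt_i(M))$.

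The main obstacle—indeed essentially the only nontrivial point—is the structural fact that a minimal presentation over a local ring has all entries in $\m$, together with the compatibility $\fitt_i(M)_\p=\fitt_i(M_\p)$. Both are standard consequences of Nakayama's lemma and the exactness of localization, so once these are established the proposition follows formally from the definitions and the minors conventions.
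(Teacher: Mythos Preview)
Your argument is correct and is the standard proof of this classical fact. Note, however, that the paper does not supply its own proof of this proposition: it is simply quoted from \cite[Proposition 20.6]{eisenbud_book} as a preliminary tool, so there is no in-paper proof to compare against. Your approach---reducing to a minimal presentation over the local ring so that the relation matrix has entries in $\m$, then globalizing via the compatibility $\fitt_i(M)_\p=\fitt_i(M_\p)$---is exactly the argument one finds in Eisenbud's book.
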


\begin{lemma}\cite[Corollary 2.2]{Lan14}\cite[Remark 1.3.2]{Vasconcelosbook}\label{Gs and Fitting ideals} The ideal $I$ satisfies $\Gs{s}$ if and only if $\hgt{\fitt_i(I)} \geq i+1$ for all $i \leq s-1$.
\end{lemma}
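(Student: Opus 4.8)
The plan is to convert the $\Gs{s}$ condition, which is a pointwise statement about the minimal number of local generators $\mu(I_Q)$, into the asserted height statement on Fitting ideals, with \Cref{NASC on fitting ideal} serving as the bridge. The key input is that Fitting ideals commute with localization, so that $\fitt_i(I)_Q = \fitt_i(I_Q)$ for every $Q \in \spc(R)$. Combined with \Cref{NASC on fitting ideal} applied to the local ring $R_Q$, this yields, for every prime $Q$ and every $i$, the equivalence
\[
Q \in V(\fitt_i(I)) \iff \fitt_i(I)_Q \neq R_Q \iff \mu(I_Q) > i .
\]
First I would use this to reformulate the right-hand side of the lemma: because the height of an ideal equals the minimum of the heights of its minimal primes, the inequality $\hgt \fitt_i(I) \geq i+1$ holds if and only if every prime $Q$ with $\mu(I_Q) \geq i+1$ satisfies $\hgt Q \geq i+1$.

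Next I would prove the biconditional by comparing negations, matching the indices through the substitution $i = \hgt Q$. If $\Gs{s}$ fails, there is $Q \in V(I)$ with $h := \hgt Q \leq s-1$ and $\mu(I_Q) > h$; taking $i = h \leq s-1$ then exhibits a prime with $\mu(I_Q) \geq i+1$ but $\hgt Q = i$, so $\hgt \fitt_i(I) \leq i$ and the Fitting condition fails. Conversely, if $\hgt \fitt_i(I) \leq i$ for some $i \leq s-1$, I would choose a minimal prime $Q$ over $\fitt_i(I)$ with $\hgt Q = \hgt \fitt_i(I) \leq i$; then $\mu(I_Q) \geq i+1 > i \geq \hgt Q$, and since $\mu(I_Q) \geq i+1 \geq 2$ forces $I_Q \neq R_Q$ and hence $I \subseteq Q$, this $Q$ lies in $V(I)$ with $\hgt Q \leq s-1$ and violates $\Gs{s}$. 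Assembling these two implications shows that $\Gs{s}$ holds precisely when $\hgt \fitt_i(I) \geq i+1$ for all $i \leq s-1$.

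Two bookkeeping points deserve care, and they constitute the only real friction in the argument. First, one must confirm that the primes appearing on the Fitting side genuinely lie in $V(I)$: this is exactly where the observation $\mu(I_Q) \geq 2 \Rightarrow I_Q \neq R_Q \Rightarrow I \subseteq Q$ is used, and it is what reconciles the unrestricted quantifier in the Fitting condition with the restriction to $V(I)$ built into $\Gs{s}$. Second, the extreme index must be treated by convention: for $i \geq 1$ one has $V(\fitt_i(I)) \subseteq V(I)$ as just noted, whereas the $i = 0$ term (where $\fitt_0(I) = I_n(\varphi) = 0$ for the $n \times (n-1)$ matrix $\varphi$) is vacuous for the application and is read with the usual convention on the zero ideal. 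I expect no genuine obstacle beyond this index matching; the substantive content is carried entirely by the localization formula for Fitting ideals together with \Cref{NASC on fitting ideal}.
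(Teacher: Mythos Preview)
The paper does not supply a proof of this lemma; it is recorded with citations to \cite{Lan14} and \cite{Vasconcelosbook} and left at that. Your argument is the standard one and is correct: localization of Fitting ideals together with \Cref{NASC on fitting ideal} gives exactly the bridge $Q \in V(\fitt_i(I)) \iff \mu(I_Q) > i$, and your contrapositive matching of indices via $i = \hgt Q$ is the usual way to finish.

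One remark on the $i=0$ bookkeeping you flag at the end. In a domain with $I \neq 0$ one has $\Supp(I) = \spc(R)$, hence $\fitt_0(I)$ lies in every prime and so $\fitt_0(I) = 0$, giving $\hgt \fitt_0(I) = 0$; thus the Fitting condition literally fails at $i=0$, while the $\Gs{s}$ side is vacuous at height~$0$ since no height-zero prime lies in $V(I)$. So the equivalence as written really should be read over $1 \leq i \leq s-1$ (or, as often stated, over $\hgt I \leq i \leq s-1$). Your instinct to set the $i=0$ term aside by convention is therefore the right move, and in the paper's applications (height-two perfect ideals in $\k[x,y,z]$, where only $i=1,2$ matter via \Cref{Gs and Fitting ideals}) this wrinkle never intervenes. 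This is a looseness in the paper's statement rather than a gap in your argument.
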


\subsection{Rees Algebra of Ideals}
To compute the defining ideal of the Rees algebra $\mathcal{R}(I)$, we define the map
\begin{align*}
    \Phi:R[w_0,\dots, w_d]&\rightarrow \mathcal{R}(I)=R[It]\\
    w_i&\rightarrow f_it
\end{align*}
The kernel $\ker\Phi$ of the above map is the called the \textit{defining ideal} of the Rees algebra $\mathcal{R}(I)$. We set $\mathcal{A}=\ker\Phi$.
Often to compute the defining ideal of the Rees algebra we approximate it using the symmetric algebra $\Sym(I)$ which has the presentation 
\begin{align*}
    \Sym(I)\cong\frac{R[w_0,\dots,w_d]}{\mathcal{L}}
\end{align*}
where $\mathcal{L}=I_1([w_0\cdots w_d]\circ\varphi)$. Clearly $\mathcal{L}\subseteq\mathcal{A}$ and hence the map $\Phi$ factors through the symmetric algebra $\Sym(I)$.
\begin{align*}
    \xymatrix{
    R[w_0,\dots,w_d] \ar[rd]\ar[rr]^-\Phi& & \mathcal{R}(I)\\
    &\Sym(I)\ar[ru]&
    }
\end{align*}

We define a bi-degree on $R[w_0,\dots,w_d]$, that is, $\deg x_i=(1,0)$ and $\deg w_j=(0,1)$. Thus, every element of the defining ideal of the Rees algebra has bi-degree $(a,b)$. The first component is referred to as the $\underline{x}$-degree while the second component measures the $\underline{w}$-degree.
\subsection{Jacobian dual}
\begin{definition}
    Let $R=\kk[x_1,\dots,x_d]$ is ring of polynomial over the field $\kk$ and ideal $I$ of $R$ has $m \times n$ presentation matrix whose entries are  forms in $R$. Assume that $I_1(\varphi)=\langle a_1,\dots,a_r\rangle$. Then there exist a $r \times n$ matrix $B(\varphi)$ with entries in $\kk[x_1,\dots,x_d,w_0,\dots ,w_{m-1}]$ so that
    \begin{align*}
        [w_0\cdots w_{m-1}] \cdot \varphi = [a_1\cdots a_r] \cdot B(\varphi).
    \end{align*} The matrix $B(\varphi)$ is called \textit{Jacobian dual matrix} of $\varphi$.
\end{definition}

The minors of the Jacobian dual matrix are equations that are in $\mathcal{A}$, but not in $\mathcal{L}$. For example, when the presentation matrix $\varphi$ is linear, that is, $I_1(\varphi)=( x_1,\dots,x_d)$, then we can show that $I_1(\varphi)I_d(B(\varphi))\subseteq \mathcal{L}\subseteq \mathcal{A}$, a prime ideal in $S=R[w_0,\dots,w_{m-1}]$. Since $x_i\not\in \mathcal{A}$ for all $1\leq i\leq d$, we have $I_d(B(\varphi))\subseteq \mathcal{A}$.

\subsection{Genric Residual Intersections}
Given a nonzero ideal $I$ in a Noetherian ring $R$. If either $I=R$ or $s$ is any positive integer such that $I$ satisfies $\Gs{s}$ where $s \geq \max{1, \hgt I}$, its generic $s$-residual intersection was defined by Huneke and Ulrich in [8]as follows. Assume that $f_1,\dots,f_n$ is a generating set of $I$. Let $X$ be a generic $n \times s$ matrix and let $Y=R[X]$. Set $(a_1,\dots,a_s) =(f_1\dots,f_n) \cdot X$.

\begin{definition}\cite[1.3]{HunekeUlrich1988}
 The ideal $I_s:=(a_1,\dots,a_s)X:I$ is called the generic $s$-residual intersection of $I$ with respect to $f_1\dots,f_n$.   
\end{definition}

When the ideal $I$ is a complete intersection, its generic residual intersections can be computed explicitly.
\begin{theorem}\cite[1.8]{HunekeUlrich1988}\label{computation of generic residual intersection}
 Let $R$ be a local Cohen–Macaulay ring. Let $f_1,\dots,f_n$ be a regular sequence in $R$. Then $I_s=(a_1,\dots,a_s,I_n(X))$.  
\end{theorem}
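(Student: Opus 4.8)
The plan is to prove the two containments separately, working in $Y=R[X]$ and writing $J=(a_1,\dots,a_s)$ and $IY=(f_1,\dots,f_n)Y$, so that $I_s=J:IY$. I arrange the generators into a column vector $\mathbf{f}=(f_1,\dots,f_n)^{\mathsf{T}}$ and record the defining relation $\ma^{\mathsf{T}}=X^{\mathsf{T}}\mathbf{f}$, where $\ma=(a_1,\dots,a_s)$.

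For the containment $(a_1,\dots,a_s)+I_n(X)\subseteq I_s$, fix an $n$-subset $C$ of the columns of $X$, let $X_C$ be the corresponding $n\times n$ submatrix, and $\Delta_C=\det X_C$. Since the associated subtuple satisfies $(a_j)_{j\in C}^{\mathsf{T}}=X_C^{\mathsf{T}}\mathbf{f}$, multiplying by $\operatorname{adj}(X_C^{\mathsf{T}})$ and using $\operatorname{adj}(X_C^{\mathsf{T}})X_C^{\mathsf{T}}=\Delta_C I_n$ gives $\Delta_C f_i\in(a_j)_{j\in C}\subseteq J$ for all $i$; hence $\Delta_C\in J:IY$. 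Letting $C$ vary yields $I_n(X)\subseteq I_s$, and with the trivial $J\subseteq I_s$ this settles one inclusion. This is Cramer's rule, nothing more.

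The reverse inclusion $I_s\subseteq(a_1,\dots,a_s)+I_n(X)$ is the heart of the matter. Given $\alpha\in J:IY$, write $\alpha f_i=\sum_j g_{ij}a_j$; substituting $a_j=\sum_k x_{kj}f_k$ gives $(\alpha I_n-GX^{\mathsf{T}})\mathbf{f}=0$ with $G=(g_{ij})$. Since $f_1,\dots,f_n$ remains a regular sequence in the free extension $Y=R[X]$, multiplying by the adjugate and cancelling the nonzerodivisor $f_1$ forces $\det(\alpha I_n-GX^{\mathsf{T}})=0$, while Cauchy--Binet shows $\det(GX^{\mathsf{T}})\in I_n(X)$. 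Reducing modulo $J$, the goal becomes the purely annihilator-theoretic statement
\[
\Ann_{Y/J}\big(\overline{f_1},\dots,\overline{f_n}\big)\subseteq I_n(X)\,(Y/J).
\]

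I expect this last inclusion to be the genuine obstacle. The only elementary leverage is that after inverting any maximal minor $\Delta_C$ the matrix $X_C$ becomes invertible, so $J=IY$ and both $I_s$ and $J+I_n(X)$ collapse to the unit ideal; thus the two ideals agree on $\operatorname{Spec}Y\setminus V(I_n(X))$. But every minimal prime of $J+I_n(X)$ contains $I_n(X)$, so this says nothing at the generic points of the residual intersection, which is exactly where equality must be checked. To overcome this I would reduce to the universal complete intersection $R_0=\k[T_1,\dots,T_n]$ with $f_i=T_i$ --- legitimate because an arbitrary regular sequence is a specialization of the generic one and the whole construction commutes with the base change $R_0\to R$, $T_i\mapsto f_i$ --- and there invoke the Cohen--Macaulayness of generic residual intersections of a complete intersection. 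That input provides a finite free resolution of $Y/(J+I_n(X))$ of length $s$, so $J+I_n(X)$ is perfect, hence unmixed, of height $s=\hgt I_s$. Unmixedness is precisely what promotes the determinantal identities from an equality of radicals to an equality of ideals: because $J+I_n(X)\subseteq I_s$ and $Y/(J+I_n(X))$ has no embedded primes, it suffices to verify $(I_s)_{\mathfrak p}=(J+I_n(X))_{\mathfrak p}$ at each minimal prime $\mathfrak p$, where the explicit local form of a residual intersection of a complete intersection makes the comparison direct. Granting the Cohen--Macaulayness/perfection statement, Cramer's rule, Cauchy--Binet, and this unmixedness argument combine to give $I_s=(a_1,\dots,a_s,I_n(X))$.
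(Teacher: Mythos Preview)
The paper does not prove this statement; it is quoted without proof from Huneke--Ulrich as a preliminary fact, so there is no in-paper argument to compare your attempt against.

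Regarding your sketch: the Cramer inclusion $J+I_n(X)\subseteq I_s$ and the reduction to the universal regular sequence $f_i=T_i$ in $\k[T_1,\dots,T_n]$ are both sound. The gap lies in the sentence ``invoke the Cohen--Macaulayness of generic residual intersections of a complete intersection. That input provides a finite free resolution of $Y/(J+I_n(X))$ of length $s$.'' The result you are citing (\Cref{generic residual intersection is Cohen Macaulay} in this paper, i.e.\ Huneke--Ulrich 1.4) asserts that $I_s$ is Cohen--Macaulay, not that $J+I_n(X)$ is; since the equality $I_s=J+I_n(X)$ is exactly what you are trying to establish, you cannot transport the resolution from one ideal to the other. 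Your unmixedness strategy requires an \emph{independent} proof that $J+I_n(X)$ is perfect of height $s$ in $\k[T_1,\dots,T_n,X]$. That fact is true and is essentially how Huneke--Ulrich proceed (via an explicit complex of Buchsbaum--Rim/Eagon--Northcott type), but it is a substantial input that your outline assumes rather than supplies. Even granting perfection, the closing clause ``the explicit local form of a residual intersection of a complete intersection makes the comparison direct'' at a minimal prime $\mathfrak p$ of $J+I_n(X)$ is not fleshed out: every such $\mathfrak p$ contains $I_n(X)$, so no maximal minor can be inverted there, and you have not said what the ``direct'' local computation actually is.
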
 

\begin{theorem}\cite[1.4]{HunekeUlrich1988}\label{generic residual intersection is Cohen Macaulay}
    Let $R$ be a local Cohen–Macaulay ring and $I$ a strongly Cohen–Macaulay ideal satisfying the condition $\Gs{s}$, where $s\geq g=\grade I \geq 1$. Then a generic $s$-residual intersection $I_s$ of $I$ is a Cohen–Macaulay ideal and, if $I$ satisfies $\Gs{{s+1}}$, it is a geometric $s$-residual intersection of $IT$.
\end{theorem}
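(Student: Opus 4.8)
The plan is to resolve the quotient $T/I_s$ by an explicit complex of length $s$ over the generic extension $T=R[X]$ and to read off Cohen--Macaulayness from its acyclicity together with the depth of its terms. Write $\underline f=f_1,\dots,f_n$ for the chosen generators, let $K_\bullet=K_\bullet(\underline f;R)$ be the Koszul complex with cycles $Z_i$ and homology $\HH_i=\HH_i(\underline f;R)$, and recall that ``strongly Cohen--Macaulay'' means exactly that every $\HH_i$ is a maximal Cohen--Macaulay module over $R/\ann \HH_i$, so each $\HH_i$ has the largest possible depth. The model computation is the complete intersection case \Cref{computation of generic residual intersection}, where $I_s=(a_1,\dots,a_s,I_n(X))$ is manifestly perfect of grade $s$; the strongly Cohen--Macaulay case will be reduced to an acyclicity statement that degenerates to this model when the $\HH_i$ with $i>0$ vanish.

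First I would assemble the approximation complex $C_\bullet$ associated to $\underline f$ and the generic matrix $X$: its terms $C_i$ are built from $\HH_i$ (equivalently, from the cycle modules $Z_i$) tensored over $R$ with free pieces coming from symmetric powers of $T$ in the $X$--variables, so that after the flat base change $R\to T$ each $C_i$ is a maximal Cohen--Macaulay $T$--module. The differentials are induced by the Koszul differential and by multiplication by the generic elements $a_j=\sum_i f_iX_{ij}$. The first task is the bookkeeping identification $H_0(C_\bullet)\cong T/I_s$ (more precisely $T/(\mathfrak a:I)$ with $\mathfrak a=(a_1,\dots,a_s)$), which follows from the definition of $I_s$ together with the presentation of the symmetric algebra; here one uses that $I$ satisfies $\Gs{s}$ so that the relevant colon ideal and the naive image of the last differential agree.

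The technical heart, and the step I expect to be the main obstacle, is proving that $C_\bullet$ is acyclic. I would verify this by the Buchsbaum--Eisenbud/Peskine--Szpiro acyclicity criterion, i.e. by checking at each prime $\p\in\spc(T)$ that the localized complex $(C_\bullet)_\p$ is exact whenever $\depth T_\p$ is small. The argument splits on the local number of generators of $I$: if $I\not\subseteq\p$ the complex localizes to a shift of a Koszul-type complex that is exact; if $I\subseteq\p$, then $\Gs{s}$ bounds $\mu(I_\p)$ via \Cref{Gs and Fitting ideals} and \Cref{NASC on fitting ideal}, forcing enough of the generic parameters $X_{ij}$ to act as a regular sequence so that the higher Koszul homology contributes nothing below the critical degree. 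Strong Cohen--Macaulayness enters precisely to guarantee that the modules $\HH_i$ feeding into $C_\bullet$ carry the depth needed for the estimate, while the genericity of $X$ ensures that the locus where acyclicity could fail has codimension at least $s$. Once acyclicity is in hand, a depth chase through the short exact sequences extracted from $C_\bullet$, using that each $C_i$ is maximal Cohen--Macaulay, shows that $T/I_s$ is Cohen--Macaulay of codimension $s$; the genericity of $X$ supplies the reverse height bound $\hgt I_s\geq s$ needed to identify $I_s$ as a genuine $s$-residual intersection.

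Finally, for the geometric statement I would argue by localizing at a hypothetical prime $\p\supseteq I+I_s$ with $\hgt\p\leq s$. Under the stronger hypothesis $\Gs{s+1}$ we have $\mu(I_\p)\leq\hgt\p\leq s$, so the $s$ generic combinations $a_1,\dots,a_s$ already generate $I_\p$ up to radical; this forces $(\mathfrak a:I)_\p=T_\p$, contradicting $I_s\subseteq\p$. Hence no such prime exists, $\hgt(I+I_s)\geq s+1$, and $I_s$ is a geometric $s$-residual intersection of $IT$. Throughout, the passage from $R$ to $T=R[X]$ is harmless because it is faithfully flat with Cohen--Macaulay (indeed polynomial) fibers, so depth, height, and the Cohen--Macaulay property transfer as required, and the whole scheme specializes to \Cref{computation of generic residual intersection} in the complete intersection case, providing a consistency check on the construction of $C_\bullet$.
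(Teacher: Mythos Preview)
The paper does not prove this statement at all: it is quoted verbatim from \cite[1.4]{HunekeUlrich1988} in the preliminaries section and is used as a black box later on, so there is no ``paper's own proof'' to compare against. Your sketch, based on building an approximation-type complex out of the Koszul homology of $\underline f$ and the generic matrix $X$, verifying acyclicity via the Buchsbaum--Eisenbud criterion using $\Gs{s}$ and strong Cohen--Macaulayness, and then running a depth chase, is in fact the strategy of the original Huneke--Ulrich argument (and of the Herzog--Simis--Vasconcelos theory it builds on). In that sense your plan is faithful to the source the paper cites, not to anything in this paper.

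One caution on the details: the identification $H_0(C_\bullet)\cong T/I_s$ is more delicate than you indicate. The zeroth homology of the approximation complex is naturally the symmetric algebra, and bridging from there to the colon ideal $\mathfrak a:I$ is precisely where the Artin--Nagata conditions and the genericity of $X$ do real work; this step does not follow ``from the definition of $I_s$ together with the presentation of the symmetric algebra'' alone. Likewise, the geometric part under $\Gs{s+1}$ needs a bit more than ``$a_1,\dots,a_s$ generate $I_\p$ up to radical'': you need that locally the generic linear combinations actually generate $I_\p$, which is where the inequality $\mu(I_\p)\leq s$ and the invertibility of a suitable submatrix of $X$ come in. These are fixable, but as written the proposal glosses over the two genuinely technical points of the original proof.
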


\begin{theorem}\cite[1.5]{HunekeUlrich1988}\label{deformation}
 Let $R$ be a local Gorenstein ring and $I$ a strongly Cohen–Macaulay ideal satisfying the condition $\Gs{{s+1}}$, where $s \geq g=\grade I \geq 1$. Let $I_s$ be a generic $s$-residual intersection of $I$, and let $J$ be an arbitrary $s$-residual intersection of $I$ in $R$. Then there exists $P\in Spec(T)$ such that $(T_P, (I_s)_P)$ is a deformation of $(R, J)$, That is, $(R,J) =(T_P/(\underline{x}), ((I_s)_P, \underline{x})/(\underline{x}))$ for some sequence $\underline{x}$ in $T_P$ which is regular on $T_P$ and on $T_P/(I_s)_P$.   
\end{theorem}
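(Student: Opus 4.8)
The plan is to realize the arbitrary residual intersection $J$ as a flat specialization of the generic one $(I_s)_P$, and to show that the specializing parameters form a regular sequence both on $T$ and on the residual quotient $T/I_s$. First I would make the specialization explicit. Write $J=\a:I$ with $\a=(b_1,\dots,b_s)$ and each $b_i\in I$, so that $b_i=\sum_{j}x_{ji}f_j$ for suitable $x_{ji}\in R$. Assembling the $x_{ji}$ into an $n\times s$ matrix and comparing it with the generic matrix $X$ defining $T=R[X]$ produces a surjection $\pi\colon T\to R$ with $X_{ji}\mapsto x_{ji}$, whose kernel is generated by the sequence $\underline{x}=(X_{ji}-x_{ji})_{j,i}$. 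Because $\pi(a_i)=\pi\bigl(\sum_j X_{ji}f_j\bigr)=\sum_j x_{ji}f_j=b_i$, the natural candidate is the prime $P=\pi^{-1}(\m)\in\spc(T)$, which contains $\underline{x}$ and lies over $\m$; the sequence $\underline{x}$ is automatically $T_P$-regular since $T$ is a polynomial extension of $R$. Thus $(R,\pi(I_s))=(T_P/(\underline{x}),((I_s)_P,\underline{x})/(\underline{x}))$, and the theorem reduces to the two assertions that $\underline{x}$ remains regular modulo $I_s$ and that $\pi(I_s)=J$.

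Next I would bring in the structural results already available. Since $I$ is strongly Cohen--Macaulay and satisfies $\Gs{{s+1}}$ with $s\geq g=\grade I\geq1$, \Cref{generic residual intersection is Cohen Macaulay} shows that $T/I_s$ is Cohen--Macaulay with $\hgt I_s=s$, and, crucially, that $I_s$ is a \emph{geometric} $s$-residual intersection, i.e. $\hgt(I_s+IT)\geq s+1$. This geometric strengthening is exactly what upgrading $\Gs{s}$ to $\Gs{{s+1}}$ buys, and it records that $I$ meets no minimal prime of $I_s$ in codimension $s$. With this in hand, regularity of $\underline{x}$ on $(T/I_s)_P$ becomes a dimension equivalence: as $(T/I_s)_P$ is Cohen--Macaulay of dimension $\dim R_\m+ns-s$ and $\underline{x}$ has length $ns$, the sequence is regular if and only if $(T/(I_s+\underline{x}))_P\cong(R/\pi(I_s))_\m$ has dimension $\dim R_\m-s$, that is, if and only if $\hgt_\m\pi(I_s)=s$. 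The inclusion $\pi(I_s)\subseteq J$ together with $\hgt J=s$ gives $\hgt\pi(I_s)\leq s$ for free, so the whole problem collapses to the single inequality $\hgt\pi(I_s)\geq s$: the specialization must not let a codimension-$s$ component collapse.

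This height preservation is the main obstacle, and I would attack it through acyclicity rather than a naive height count. Regularity of $\underline{x}$ on $T/I_s$ is equivalent to $\Tor_i^T(T/I_s,R)=0$ for $i>0$, i.e. to the Koszul complex on $\underline{x}$ tensored with $T/I_s$ staying acyclic; by the Buchsbaum--Eisenbud acyclicity criterion this is governed precisely by depth (equivalently grade) inequalities on the minors of the relevant maps, which are controlled by the geometric condition $\hgt(I_s+IT)\geq s+1$ from Step~2. Concretely, I expect to verify these inequalities by first reducing to the complete-intersection model, where \Cref{computation of generic residual intersection} gives the explicit form $I_s=(a_1,\dots,a_s,I_n(X))$ and the specialization can be checked directly, and then transferring the regularity back along a deformation of $I$ to a complete intersection (available because $I$ is strongly Cohen--Macaulay satisfying $\Gs{{s+1}}$). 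The delicate point throughout is the bookkeeping of heights and grades under $\pi$, ensuring no component of $V(I_s)$ degenerates into $V(IT)$; this is exactly the transversality that the geometric residual intersection supplies.

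Finally, once $\underline{x}$ is known to be $(T/I_s)_P$-regular, the quotient $T/(I_s+\underline{x})\cong R/\pi(I_s)$ is unmixed of height $s$, being Cohen--Macaulay, and its associated primes are the specializations of the minimal primes of $I_s$. By the geometric hypothesis these avoid $V(IT)$, which is precisely the condition making the formation of the colon commute with $\pi$, so that $\pi\bigl((a_1,\dots,a_s):IT\bigr)=(b_1,\dots,b_s):I$. Combining the two unmixed height-$s$ ideals $\pi(I_s)\subseteq J$ then forces $\pi(I_s)=J$, and the identification $(R,J)=(T_P/(\underline{x}),((I_s)_P,\underline{x})/(\underline{x}))$ follows. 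I anticipate that the entire proof hinges on Step~3: everything after it is formal, while the regularity of the specializing sequence on $T/I_s$ is the genuine content, and it is exactly there that the passage from $\Gs{s}$ to $\Gs{{s+1}}$ in \Cref{generic residual intersection is Cohen Macaulay} is indispensable.
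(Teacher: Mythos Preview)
The paper does not prove this statement at all: \Cref{deformation} is quoted verbatim as \cite[1.5]{HunekeUlrich1988} in the preliminaries section, with no proof supplied, and is then invoked as a black box (e.g.\ in the proof of \Cref{colon ideal in second case}). There is therefore no ``paper's own proof'' to compare your proposal against.

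That said, your outline is broadly the right shape for the Huneke--Ulrich argument: specialize the generic matrix, use the Cohen--Macaulayness of $T/I_s$ from \Cref{generic residual intersection is Cohen Macaulay} to convert regularity of $\underline{x}$ on $T/I_s$ into a height count, and then identify $\pi(I_s)$ with $J$. One point to be careful with is your Step~3, where you propose to ``reduce to the complete-intersection model \ldots\ and then transfer the regularity back along a deformation of $I$ to a complete intersection.'' This is close to circular: the existence of such a deformation compatible with residual intersections is essentially of the same strength as what you are trying to prove. In the original argument the height inequality $\hgt\pi(I_s)\geq s$ is obtained more directly, without passing through a complete-intersection deformation of $I$ itself. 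If you want to reconstruct the proof, you should consult \cite{HunekeUlrich1988} rather than the present paper.
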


\subsection{Matrix pencils over $\k[x,y]$}
In this subsection we collect the results on linear matrices with entries in $\k[x,y]$ which we would need in the sequel. Any matrix $M$ with entries $\k[x,y]$ can be written in the form $xA+yB$ where $A,B$ are matrices of the same dimensions as $M$ with entries in $\k$. Two of these matrices $xA+yB$ and $xA'+yB'$ are said to be \textit{strictly equivalent} if there exists invertible matrices $C,D$ such that $xA'+yB'=C(xA+yB)D$. For extensive study on these matrices and their possible forms we refer to \cite{Gantmacher_matrices}. The following result characterizes the canonical forms of the linearly presented matrices $M$.

We will  write $[M,N]$ to represent the block diagonal form $\begin{bmatrix}
    M &0\\
    0 & N
\end{bmatrix}$.
\begin{theorem}{(\cite[Lemma 1]{Matrix_pencils},Kronecker)} \label{matrix pencils}A linearly presented matrix $M$ is strictly equivalent to the matrix $[O,L_{n_1},\dots,L_{n_r},L_{m_1}^t,\dots,L_{m_s}^t,N_1,\dots,N_t]$
where $O$ is the zero matrix, $L_n$ is a $n\times (n+1)$ matrix of the form $\begin{bmatrix}
    x&y&0 &\cdots &0\\
    0 & x & y &\cdots &0\\
    \vdots & \vdots &\vdots &\ddots &\vdots\\
    0 &0 &\cdots&\cdots x&y
\end{bmatrix}$ and $N_m$ is a square $m\times m$ matrix of the form $\begin{bmatrix}
    \alpha x+\beta y& \alpha'x+\beta'y&0 &\cdots &0\\
    0 &\alpha x+\beta y&\alpha'x+\beta'y&0\cdots &0\\
    \vdots & \vdots &\vdots &\ddots &\vdots \\
    0 &0&0 &\cdots &\alpha x+\beta y
\end{bmatrix}$ with $\alpha,\beta\in\k$.
\end{theorem}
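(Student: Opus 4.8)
The plan is to prove this by Kronecker's classical reduction, working with the pencil $M = xA + yB$ where $A, B \in \k^{m\times n}$, and proceeding by induction on the total size $m+n$. At each stage I would split off a single canonical block via a strict equivalence $M \rightsquigarrow [\text{block}, M']$ with $M'$ a pencil of strictly smaller size, and then apply the inductive hypothesis to $M'$. First I would dispose of identically-zero rows and columns: permutation matrices (which are invertible constant matrices, hence realize a strict equivalence) isolate any zero row or column into the $O$-block, so after this step I may assume $M$ has no zero row and no zero column.

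The technical heart is the reduction lemma for the \emph{singular} part. Suppose the columns of $M$ are $\k[x,y]$-linearly dependent, i.e.\ there is a nonzero polynomial vector $v(x,y)$ with $Mv = 0$; choose such a $v$ of minimal degree $\epsilon$. Writing $v$ in homogeneous components and exploiting the minimality of $\epsilon$, a careful sequence of elementary row and column operations (following Gantmacher, Ch.~XII) shows that $M$ is strictly equivalent to a block form $[L_\epsilon, M']$, where $L_\epsilon$ is the $\epsilon\times(\epsilon+1)$ block in the statement and $M'$ is a genuine pencil of smaller size whose column minimal indices are all $\geq \epsilon$. Applying the same argument to $M^t$ handles row dependencies and peels off blocks $L_\eta^t$. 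Iterating strips away all the $L_{n_i}$ and $L_{m_j}^t$ blocks.

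After the singular blocks are removed, the remaining pencil is \emph{regular}: it is square and $\det(xA'+yB') \not\equiv 0$. Here I would invoke the Weierstrass theory of elementary divisors. Since $\k$ is algebraically closed I can choose $[\lambda_0:\mu_0]$ with $\det(\lambda_0 A' + \mu_0 B') \neq 0$; a linear change of the homogeneous pair $(x,y)$ then makes one coefficient matrix invertible, so after left multiplication by its inverse the pencil takes the form $xI - yC$. The Jordan canonical form of $C$ decomposes this into blocks each strictly equivalent to an $N_m$ with diagonal entry $\alpha x + \beta y$ and superdiagonal entry $\alpha' x + \beta' y$; the eigenvalue ``at infinity'' (where $A'$ is singular) is handled symmetrically after interchanging the roles of $x$ and $y$, yielding the remaining $N_m$ blocks. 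Assembling the extracted $O$, $L_{n_i}$, $L_{m_j}^t$, and $N_k$ blocks produces the asserted canonical form.

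I expect the main obstacle to be the singular reduction lemma: establishing that a \emph{minimal-degree} kernel vector forces the clean strictly-equivalent splitting $[L_\epsilon, M']$, and that the minimal indices behave as genuine invariants (in particular that the remainder does not manufacture smaller indices). This bookkeeping with homogeneous components and rank arguments is the delicate part; the zero-block extraction is routine, and the regular part is a direct translation of Jordan/Weierstrass theory once algebraic closure of $\k$ is used.
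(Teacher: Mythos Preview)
The paper does not supply its own proof of this theorem: it is stated in the Preliminaries section as a classical result attributed to Kronecker, with a citation to \cite[Lemma 1]{Matrix_pencils} and a general reference to Gantmacher's treatise \cite{Gantmacher_matrices}. So there is no ``paper's proof'' to compare against.

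That said, your outline is exactly the classical Kronecker--Gantmacher reduction (Gantmacher, Chapter~XII), which is the argument the cited sources give. The three-stage structure you describe --- strip zero rows and columns into the $O$ block, peel off singular blocks $L_\epsilon$ and $L_\eta^t$ via a minimal-degree kernel vector, then reduce the remaining regular pencil through Weierstrass elementary divisors / Jordan form --- is the standard one, and you have correctly located the only genuinely delicate step in the minimal-index reduction lemma. Your identification of where algebraic closure of $\k$ enters (splitting the regular part into $N_m$ blocks) is also right. There is nothing to correct here; your proposal is a faithful sketch of the proof one would find in the references the paper cites.
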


\section{Almost Linearly Presented Ideals}\label{section 3 and almost linearly presented matrix}

\subsection{Presentation matrix $\varphi$ of the ideal $I$}
First we will give canonical forms of the presentation matrix $\varphi$. Since $I$ is the perfect ideal of height two, therefore $I$ is generated by the maximal minors of $\varphi$ (\cite[Theorem 20.6]{BrunsHerzogbook}). Hence, performing $\k$-linear combination of the rows and column will not change the ideal $I$.

\begin{lemma}\label{presentation matrix}
    Let $\varphi$ be the presentation matrix as given in \Cref{Main Setting}. After a possible change of variables and elementary row and column operations, $\varphi$ has three different forms
    \begin{align*}
\varphi = \begin{pmatrix}
*&* \cdots * &z^2\\
& &*'\\
&\varphi'&\vdots\\
& &*'\\
\end{pmatrix},&& \varphi = \begin{pmatrix}
*&* \cdots z &*'\\
& &*'\\
&\varphi'&\vdots\\
& &*'\\
\end{pmatrix}&&\text{and}&&
\varphi = \begin{pmatrix}
*&* \cdots z &z^2+*'\\
& &*'\\
&\varphi'&\vdots\\
& &*'\\
\end{pmatrix},
\end{align*}
where $\varphi'$ is a $(n-1) \times (n-2)$ matrix. The entries of $\varphi'$ and the $*$ entries are linear in $\k[x,y]$, while the  $*'$ entries are homogeneous of degree $2$ in $\k[x,y,z]$ but do not contain the term $z^2$.
\end{lemma}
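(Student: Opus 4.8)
The plan is to treat $\varphi=[\psi\mid v]$ as the concatenation of its linear part $\psi$ (the first $n-2$ columns, with entries linear in $x,y,z$) and its quadratic last column $v$, and to normalize each while staying inside the class of operations that preserve the ideal $I$ of maximal minors (by Hilbert--Burch) and the almost-linear shape: invertible constant row operations, constant column operations among the first $n-2$ columns, addition of a $\k$-linear-form multiple of one of the first $n-2$ columns into the last column, and linear changes of the variables $x,y,z$. First I would expand $\psi=xA+yB+zC$ with $A,B,C\in\k^{n\times(n-2)}$ and record the quadratic coefficients of $v$.

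The first substantive step exploits the hypothesis that, after a change of variables, $\varphi$ has rank at most $1$ modulo the ideal generated by any two of the variables. Reducing modulo $(x,y)$ collapses $\psi$ to $zC$ and $v$ to $z^2$ times its pure $z^2$-coefficient; since every $2\times2$ minor must vanish, this forces $\rank C\le 1$ and forces the $z^2$-coefficient of $v$ to be proportional to the rank-one direction of $C$. The symmetric reductions modulo $(y,z)$ and $(x,z)$ give $\rank A\le1$, $\rank B\le1$ and the analogous proportionalities for the $x^2$- and $y^2$-coefficients of $v$. When $C\neq0$ I would then use constant row and column operations to concentrate $zC$ into a single entry $z$ in position $(1,n-2)$, and use the proportionality to place the surviving $z^2$ in the top entry of $v$.

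With $z$ isolated, the remaining part of $\psi$ is a pencil over $\k[x,y]$, so I would apply the Kronecker classification of \Cref{matrix pencils} to put it in block form. Here the remaining hypotheses enter through \Cref{Gs and Fitting ideals}: $\Gs{2}$ forces $\hgt I_{n-1}(\varphi)=\hgt I=2$, while the failure of $\Gs{3}$ forces $\hgt I_{n-2}(\varphi)\le 2$, producing a positive-dimensional locus where the rank of $\varphi$ drops. These height constraints rule out the block configurations (spurious zero rows or columns, or too many diagonal $N$-blocks) that would make the minor ideals too large or too small, and should pin the linear part down to a single scroll block together with the isolated $z$. Finally I would clean $v$ using the ``linear form times a linear column into the last column'' moves to eliminate every quadratic term not allowed by the target shape, in particular every $z^2$ outside the top entry.

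The trichotomy then records how $z$ can enter: $C=0$ with a surviving $z^2$ in $v$ gives Case~I, $C\neq0$ with no surviving $z^2$ gives Case~II, and $C\neq0$ with a surviving $z^2$ gives Case~III; the remaining combination is excluded because it would make $I_1(\varphi)$ fail to be $\m$-primary, contradicting $I_1(\varphi)=\m$. I expect the main obstacle to be the simultaneous normalization: because polynomial column operations among the linear columns would destroy linearity, only constant operations are available there, whereas the last column admits the richer ``add a linear multiple of a linear column'' move, so one cannot simply apply Kronecker to the whole three-variable matrix at once. Coordinating the constant reduction of $(A,B,C)$, the Kronecker form of the $\k[x,y]$-pencil, and the cleanup of $v$ so that they land in exactly one of the three forms, and verifying that the $\Gs{}$ hypotheses eliminate all remaining block patterns, is the technical heart of the argument.
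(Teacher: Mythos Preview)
Your reading of condition (5) in the Setting is off, and this throws the whole plan off course. The hypothesis only provides \emph{one} pair of variables---after a change of coordinates we may take it to be $(x,y)$---modulo which $\varphi$ has rank $\le 1$; the paper's proof literally says ``Suppose $\p=(x,y)$ be the required ideal''. Your symmetric reductions modulo $(y,z)$ and $(x,z)$, yielding $\rank A\le 1$ and $\rank B\le 1$, are not available, and in fact cannot hold once $n\ge 6$: they would force $\psi=xA+yB+zC$ to have generic rank $\le 3<n-2$, contradicting the Hilbert--Burch presentation. The paper's examples already show that $\varphi$ need not have small rank modulo $(x,z)$ or $(y,z)$.

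More importantly, you are aiming at a much harder target than the lemma actually states. The lemma does \emph{not} put $\varphi'$ into any canonical form; it only asserts that the entries of $\varphi'$ and the $*$'s lie in $\k[x,y]$ and that the $*'$'s carry no $z^2$-term. So neither the Kronecker classification nor the $\Gs{}$ hypotheses are needed here (they are used later, in \S4, to analyse $\varphi'$ further). The paper's argument is the direct one: reduce $\varphi$ modulo $(x,y)$ to get $\varphi_1=z\varphi_2$ with $\varphi_2$ having constant entries in the first $n-2$ columns and entries in $\k z$ in the last column; the rank-$\le 1$ hypothesis and $I_1(\varphi)=\m$ force $\rank_{\k(z)}\varphi_2=1$; row/column operations (constants on the linear columns, keeping the last column separate) reduce $\varphi_2$ to one of the three shapes $\bigl[\,0\cdots 1\mid 0\,\bigr]$, $\bigl[\,0\cdots 0\mid z\,\bigr]$, $\bigl[\,0\cdots 1\mid z\,\bigr]$ in row~$1$. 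Lifting back, every entry outside this $(1,n-2)$/$(1,n-1)$ corner dies modulo $(x,y)$, which is exactly the statement about $\varphi'$, the $*$'s, and the absence of $z^2$ in the $*'$'s. No cleanup of $v$ by ``linear multiple of a linear column'' moves is required for this lemma; that manoeuvre only appears afterwards, when the third form is merged into Case~II. Your core step---isolating the $z$-content via the rank-one reduction---is the right one; drop the rest.
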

\begin{proof}
    We know that after the change of variables, the matrix $\varphi$ has rank at most 1 modulo ideal generated by two variables (see condition 5 in \Cref{Main Setting}). Suppose $\p=(x,y)$ be the required ideal and $\varphi_1$ be the matrix with entries in $\k[z]$, obtained from $\varphi$ after reducing the entries of $\varphi$ modulo $\p$. That is, $\varphi_1=z \varphi_2$, where $\varphi_2$ is $n \times (n-1)$ matrix over $\k[z]$. Now the rank of $\varphi_2$ over the field $\k(z)$ can not be 0 because $I_1(\varphi)=\m=(x,y,z)$ therefore the rank of $\varphi_2$ over the field $\k(z)$ will be exactly 1. But the last column of $\varphi$ is of degree 2, so we wish to retain this information, thus after applying the row or column operation $\varphi_2$ has possible forms over $\k[z]$.

\begin{align*}
\begin{pmatrix}
0&0 \cdots 1 &0\\
& &0\\
&0&\vdots\\
& &0\\
\end{pmatrix},&& \begin{pmatrix}
0&0 \cdots 0 &z\\
& &0\\
&0&\vdots\\
& &0\\
\end{pmatrix}&&\text{and} &&\begin{pmatrix}
0&0 \cdots 1 &z\\
& &0\\
&0&\vdots\\
& &0\\
\end{pmatrix}.
\end{align*}
Equivalently, $\varphi_1$ will be
\begin{align*}
\begin{pmatrix}0&0 \cdots z &0\\
& &0\\
&0&\vdots\\
& &0\\
\end{pmatrix},&&\begin{pmatrix}
0&0 \cdots 0 &z^2\\
& &0\\
&0&\vdots\\
& &0\\
\end{pmatrix}&&\text{and} && \begin{pmatrix}
0&0 \cdots z &z^2\\
& &0\\
&0&\vdots\\
& &0\\
\end{pmatrix}.  
\end{align*}
Thus $\varphi$ has three possible forms 
\begin{align*}
\begin{pmatrix}
*&* \cdots z &*'\\
& &*'\\
&\varphi'&\vdots\\
& &*'\\
\end{pmatrix},&& \begin{pmatrix}*&* \cdots * &z^2\\
& &*'\\
&\varphi'&\vdots\\
& &*'\\
\end{pmatrix} &&\text{and } && \begin{pmatrix}
*&* \cdots z &z^2+*'\\
& &*'\\
&\varphi'&\vdots\\
& &*'\\
\end{pmatrix},    
\end{align*}
where $\varphi'$ is a $(n-1) \times (n-2)$ matrix, and the entries of $\varphi'$ and the $*$ entries are linear in $ \k[x,y]$ and the entries $*'$ are homogeneous of degree $2$ which do not contain the term $z^2$.
\end{proof}

From \Cref{presentation matrix}, one of the possible form of presentation matrix is
\begin{align*}
\varphi= \begin{pmatrix}
*&* \cdots z &z^2+*'\\
& &*'\\
&\varphi'&\vdots\\
& &*'\\
\end{pmatrix}    
\end{align*}
After applying the column operation which does not change the ideal $I$, the last column of $\varphi$ will become $[*''~\cdots~*'']^{t}$ where $*''$ is homogeneous polynomial of degree 2 in $\k[x,y,z]$ which do not contain the term $z^2$. Since the last column is $[*''~\cdots~*'']^t$ which represents $(n-1)$-th symmetric equation $l_{n-1}$, that is, $*''w_0+\cdots+w_{n-1}*''=l_{n-1} \notin (x,y,zw_0)^2$. Hence, third form of $\varphi$ is 
\begin{align*}
\varphi= \begin{pmatrix}
*&* \cdots z &*''\\
& &*''\\
&\varphi'&\vdots\\
& &*''\\
\end{pmatrix}    
\end{align*}
where $\varphi'$ is $(n-1) \times (n-2)$ linear matrix over $\k[x,y]$. The entries $*$ are linear in $\k[x,y]$ and the entries $*''$ are homogeneous polynomial of degree two over $\k[x,y,z]$ which do not contain the term $z^2$ such that $*''w_0+\cdots+*''w_{n-1}=l_{n-1}\notin (x,y,zw_0)^2$.

Now, for the first form of $\varphi$ in \Cref{presentation matrix}, the last column is $[*' \dots *']^{t}$. Thus, $*'w_0+\cdots+*'w_{n-1}=l_{n-1} \in (x,y,zw_0)^2$ or $*'w_0+\cdots+*'w_{n-1}=l_{n-1} \notin (x,y,zw_0)^2$. But $*'w_0+\cdots+*'w_{n-1}=l_{n-1} \notin (x,y,zw_0)^2$ case fall in above case. So, for the first form of $\varphi$ in \Cref{presentation matrix}, we consider $*'w_0+\cdots+*'w_{n-1}=l_{n-1}\in (x,y,zw_0)^2$.

Since there are three choices for the presentation matrix $\varphi$ of the ideal $I$, henceforth we follow the following notation
\begin{notation}\label{case i and ii presentation matrix}
Following the previous theorem, we associated three cases to the presentation matrices
\begin{enumerate}
    \item Case I:
    $\begin{pmatrix}
*&* \cdots z &*'\\
& &*'\\
&\varphi'&\vdots\\
& &*'\\
\end{pmatrix}$, $l_{n-1} \in (x,y,zw_0)^2$.
\item Case II: $\begin{pmatrix}
*&* \cdots z &*''\\
& &*''\\
&\varphi'&\vdots\\
& &*''\\
\end{pmatrix}$, $l_{n-1} \notin (x,y,zw_0)^2$.
\item Case III: $\begin{pmatrix}
*&* \cdots * &z^2\\
& &*'\\
&\varphi'&\vdots\\
& &*'\\
\end{pmatrix}$.
\end{enumerate}
The entries of $\varphi'$ and the $*$ entries are linear in $ \k[x,y]$, while the  $*'$ and $*''$ entries are homogeneous of degree $2$ but do not contain the term $z^2$. 

Henceforth, we associate Case I, Case II and Case III to the presentation matrices described above.
\end{notation}


\begin{proposition}
{\cite[Proposition 3.2]{Lan14}}\label{Lan locally linear type}
Let $I$ and $\varphi$ be as in 
 the above setting. If $\p \in V(I) \backslash V(I_{n-2}(\varphi))$ then $I_\p$ is of linear type.    
\end{proposition}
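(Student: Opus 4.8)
The plan is to convert the geometric hypothesis $\p \in V(I) \backslash V(I_{n-2}(\varphi))$ into a purely numerical statement about the minimal number of local generators $\mu(I_\p)$, and then to recognize $I_\p$ as a complete intersection, for which linear type is automatic. The whole argument is local at $\p$, so I would replace $R$ by the regular local domain $R_\p$ throughout.

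First I would identify the correct Fitting ideal. Viewing the $n$ generators of $I$ as a surjection $R^n \onto I$, the presentation $R^{n-1} \xrightarrow{\varphi} R^n \to I \to 0$ has an $n \times (n-1)$ matrix $\varphi$, so with $m=n$ rows the Fitting convention gives $\fitt_2(I)=I_{n-2}(\varphi)$ (while $\fitt_1(I)=I_{n-1}(\varphi)=I$ by Hilbert--Burch, which also confirms the indexing). Hence $\p \notin V(I_{n-2}(\varphi))$ is literally $\p \notin V(\fitt_2(I))$, and \Cref{NASC on fitting ideal} translates this into the equivalence $\mu(I_\p)\leq 2$.

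Next I would pin down the height to force equality. Since $I$ is perfect of height two it is Cohen--Macaulay, hence unmixed, so every minimal prime of $I$ has height exactly two; consequently $\hgt I_\p = 2$ for every $\p \in V(I)$. Krull's height theorem gives $\mu(I_\p)\geq \hgt I_\p = 2$, which together with the previous step yields $\mu(I_\p)=2$ exactly. Thus $I_\p$ is a height-two ideal generated by precisely two elements in the Cohen--Macaulay domain $R_\p$. I would then argue that these two generators form a regular sequence: the first is a nonzerodivisor because $R_\p$ is a domain, and since $\hgt I_\p = 2$ exceeds the height of the principal ideal generated by the first element, the second avoids every associated prime of that principal ideal (which are all of height one by unmixedness in the Cohen--Macaulay ring $R_\p$), so it is a nonzerodivisor modulo the first. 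Therefore $I_\p$ is a complete intersection, and complete intersections are of linear type (the Koszul complex identifies the symmetric and Rees algebras), completing the proof.

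The argument is essentially formal once the Fitting-ideal translation is set up, so there is no serious obstacle; the only point demanding genuine care is the last verification that the two local generators actually form a \emph{regular sequence} rather than merely generating an ideal of the right height, and this is exactly where the hypothesis that $R$ (hence $R_\p$) is a Cohen--Macaulay domain is used. A secondary point to double-check is the bookkeeping of the Fitting index $\fitt_2(I)=I_{n-2}(\varphi)$, which is what makes the excluded locus $V(I_{n-2}(\varphi))$ coincide precisely with $\{\,\p : \mu(I_\p)>2\,\}$.
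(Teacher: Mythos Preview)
The paper does not supply its own proof of this proposition; it simply cites \cite[Proposition 3.2]{Lan14}. Your argument is correct and is the expected one: translate $\p\notin V(I_{n-2}(\varphi))=V(\fitt_2(I))$ into $\mu(I_\p)\leq 2$ via \Cref{NASC on fitting ideal}, combine with $\hgt I_\p=2$ (using that $I$ is unmixed of height two and Krull's height theorem) to force $\mu(I_\p)=2$, observe that in the Cohen--Macaulay local ring $R_\p$ two generators of a height-two ideal form a regular sequence, and conclude since complete intersections are of linear type. There is nothing to compare against in the paper itself.
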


\begin{proposition}\label{minimal prime is unique}
 With the assumption of above setting, the set $\Min(I_{n-2}(\varphi))$ has only one prime.   
\end{proposition}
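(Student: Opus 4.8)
The plan is to pin down $(x,y)$ as the unique minimal prime in three moves: compute $\hgt I_{n-2}(\varphi)$, exhibit $(x,y)$ as one minimal prime, and then show it is the only common zero of the $(n-2)$-minors.

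First I would fix the height. Since $\varphi$ is an $n\times(n-1)$ presentation of $I$, one has $\fitt_2(I)=I_{n-2}(\varphi)$, while $I=I_{n-1}(\varphi)$ by Hilbert--Burch, so $I\subseteq I_{n-2}(\varphi)$ and $\hgt I_{n-2}(\varphi)\geq\hgt I=2$. The failure of $\Gs{3}$ means there is $\p\in V(I)$ with $\hgt\p\leq 2$ and $\mu(I_\p)>\hgt\p$; as $\p\supseteq I$ forces $\hgt\p\geq 2$, necessarily $\hgt\p=2$ and $\mu(I_\p)>2$. By \Cref{NASC on fitting ideal} this says $\p\in V(\fitt_2(I))=V(I_{n-2}(\varphi))$, whence $\hgt I_{n-2}(\varphi)\leq 2$, so $\hgt I_{n-2}(\varphi)=2$. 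Because $\dim R=3$ and (next step) $I_{n-2}(\varphi)\subseteq(x,y)\subsetneq\m$, no minimal prime of $I_{n-2}(\varphi)$ can equal $\m$; hence every minimal prime has height exactly $2$, i.e.\ is a closed point of $\PP^2=\Proj R$. Counting minimal primes therefore amounts to counting the points of the projective zero locus of $I_{n-2}(\varphi)$, and it suffices to prove this locus is the single point $[0:0:1]$.

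Next I would produce that one point. By condition (5) of \Cref{Main Setting} and the normal forms of \Cref{presentation matrix}, reducing $\varphi$ modulo $(x,y)$ gives a matrix over $\k[z]$ of rank at most $1$. Since $n>4$ gives $n-2\geq 2$, every $(n-2)\times(n-2)$ minor vanishes modulo $(x,y)$, so $I_{n-2}(\varphi)\subseteq(x,y)$; thus $(x,y)$, the prime of $[0:0:1]$, lies in the zero locus and is a minimal prime. It then remains to rule out any other common zero $P=[p:q:r]$ with $(p,q)\neq(0,0)$; equivalently, to show $\operatorname{rank}\varphi(P)\geq n-2$ for such $P$.

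For the final step I would exploit the Kronecker classification of \Cref{matrix pencils} applied to the linear part of $\varphi$ over $\k[x,y]$. In its canonical form the maximal minors of this linear part become transparent: the blocks $L_m^t$ contribute monomials in $x,y$, whose only common projective zero is $[0:0:1]$, while each square block $N_k$ contributes a factor $(\alpha_i x+\beta_i y)^{k_i}$ appearing in every such minor. If no $N$-block occurs, the minors have no common factor (the pure powers $x^{n-2},y^{n-2}$ appear), so no $P$ with $(p,q)\neq(0,0)$ can be a common zero and we are done. If $N$-blocks are present, any extra common zero $P$ is forced onto one of the finitely many lines $\{\alpha_i x+\beta_i y=0\}$ through $[0:0:1]$; on each such line I would then invoke the last, degree-$2$ column, whose entries omit $z^2$ and which (by the case split in \Cref{case i and ii presentation matrix}) pairs with a distinguished $z$ or $z^2$ entry, to exhibit an $(n-2)$-minor mixing this column with linear columns that does not vanish at $P$.

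The routine parts are the height computation and the containment $I_{n-2}(\varphi)\subseteq(x,y)$; I expect the genuine obstacle to be this last step, namely showing that the degree-$2$ column destroys the vanishing along each candidate line away from $[0:0:1]$ when the purely linear minors share a common factor. This is exactly where the precise normal forms of \Cref{presentation matrix} and the three-way division into Case I, Case II and Case III are needed, since the last column and the location of the $z$ or $z^2$ entry differ between the cases; I would organize it by restricting $\varphi$ to a line $\{\alpha_i x+\beta_i y=0\}$ and checking that the restricted matrix retains rank $n-2$ at every point other than $[0:0:1]$, using that the $z^2$-free degree-$2$ entries do not all degenerate there.
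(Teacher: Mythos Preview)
Your height computation and the containment $I_{n-2}(\varphi)\subseteq(x,y)$ match the paper. For uniqueness, however, the paper takes a much shorter route that completely sidesteps what you flag as the ``genuine obstacle''.

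The trick you are missing is to \emph{discard} the degree-$2$ column rather than invoke it. Let $\widetilde{\varphi}$ be the $n\times(n-2)$ submatrix given by the first $n-2$ columns of $\varphi$. Laplace expansion along the deleted column gives $I_{n-1}(\varphi)\subseteq I_{n-2}(\widetilde{\varphi})$, and from the normal forms one has $I_{n-2}(\widetilde{\varphi})\subseteq(x,y)$, so $\hgt I_{n-2}(\widetilde{\varphi})=2$. But every entry of $\widetilde{\varphi}$ is a linear form in $\k[x,y]$ (apart from the single $z$ in Cases I and II), so its $(n-2)$-minors are essentially generated in $\k[x,y]$; a height-$2$ ideal of $R$ coming from $\k[x,y]$ is automatically $(x,y)$-primary, whence $\Min(I_{n-2}(\widetilde{\varphi}))=\{(x,y)\}$. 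The sandwich $I_{n-2}(\widetilde{\varphi})\subseteq I_{n-2}(\varphi)\subseteq(x,y)$ then forces $\sqrt{I_{n-2}(\varphi)}=(x,y)$.

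Your plan instead relies on the Kronecker classification of the linear part, a three-way case split on the presentation, and a rank check along each line $\alpha_ix+\beta_iy=0$ using the degree-$2$ column. You yourself note that this last step is only outlined, not carried out, and it is genuinely delicate (one must rule out, for each $N$-block, that the nonlinear column degenerates on the corresponding line). The paper never confronts this issue: by passing to $\widetilde{\varphi}$ the problem lives in two variables, where height $2$ already pins down the radical, and neither the Kronecker blocks nor the nonlinear column enter the argument at all.
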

\begin{proof}
Since $I$ satisfies $\Gs{2}$ but does not satisfy $\Gs{3}$, $\hgt I_{n-1}(\varphi)=\hgt I_{n-2}(\varphi)=2$ by \Cref{Gs and Fitting ideals}.
Let $\widetilde{\varphi}$ be the submatrix of $\varphi$ containing the first $n-2$ columns. From the presentation matrix $\varphi$ in \Cref{case i and ii presentation matrix}, it is clear that $I_{n-1}(\varphi) \subseteq I_{n-2}(\widetilde{\varphi}) \subseteq (x,y)$ and hence $\hgt I_{n-2}(\widetilde{\varphi}) =2$ since $\hgt I_{n-1}(\varphi)=2$. Therefore, $\Min(I_{n-2}(\widetilde{\varphi}))=\{(x,y)\}$. But $I_{n-2}(\widetilde{\varphi}) \subseteq I_{n-2}(\varphi) \subseteq (x,y)$, hence we get $\Min(I_{n-2}(\varphi))=\{(x,y)\}$.    
\end{proof}

The following theorem gives the structure of the defining ideal of the Rees algbera under the action of $\Gs{{2}}$  condition. Although there is general version of this description \cite[Proposition 3.9]{SurajMukundan24}. 
\begin{proposition}\label{colon ideal related to defining ideal of Rees algebra}\cite{SurajMukundan24}
With assumption in above setting,  then $\mathcal{A}= \mathcal{L} : (x,y)^{\infty}$.    
\end{proposition}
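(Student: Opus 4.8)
The plan is to show the two inclusions $\mathcal{A} \supseteq \mathcal{L}:(x,y)^\infty$ and $\mathcal{A} \subseteq \mathcal{L}:(x,y)^\infty$ separately. The inclusion $\mathcal{L}:(x,y)^\infty \subseteq \mathcal{A}$ is the easy direction: since $\mathcal{A}$ is prime (it is the defining ideal of the Rees algebra, which is a domain because $R$ is a domain and $\mathcal{R}(I) \subseteq R[t]$), and since $x,y \notin \mathcal{A}$ (the variables of $R$ never lie in the defining ideal), any element $g$ with $(x,y)^N g \subseteq \mathcal{L} \subseteq \mathcal{A}$ must satisfy $g \in \mathcal{A}$ by primeness. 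So the whole content is in the reverse containment.

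For $\mathcal{A} \subseteq \mathcal{L}:(x,y)^\infty$, the strategy I would use is to localize and compare. Set $U = (x,y)$; by \Cref{minimal prime is unique} the unique minimal prime of $I_{n-2}(\varphi)$ is $(x,y)$, and by \Cref{Lan locally linear type} the ideal $I_\p$ is of linear type for every $\p \in V(I) \setminus V(I_{n-2}(\varphi))$. Linear type at $\p$ means precisely that $\mathcal{L}_\p = \mathcal{A}_\p$. The idea is that $\mathcal{A}/\mathcal{L}$ is supported only over $V(I_{n-2}(\varphi)) = V((x,y))$ inside $\spc(R)$: away from the locus where $I$ fails to be of linear type, $\mathcal{L}$ and $\mathcal{A}$ already agree. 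First I would argue that the $R$-module $\mathcal{A}/\mathcal{L}$ is annihilated by a power of $(x,y)$, or more precisely that every associated prime of $\mathcal{A}/\mathcal{L}$ (contracted to $R$) contains $(x,y)$. Concretely, for any homogeneous $g \in \mathcal{A}$, one localizes at the generic point and at all primes not containing $(x,y)$: at each such prime $g \in \mathcal{L}$ by the linear-type statement, so the image of $g$ in $(\mathcal{A}/\mathcal{L})$ is killed after inverting some element outside $(x,y)$, which forces $(x,y)^N g \in \mathcal{L}$ for some $N$.

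More carefully, I would phrase this using that $\mathcal{A}/\mathcal{L} = \coker(\Sym(I) \to \mathcal{R}(I))$, whose support (as a module over $S = R[w_0,\dots,w_d]$, contracted to $R$) is contained in $V((x,y))$ by the two cited propositions. A clean route is: take $g \in \mathcal{A}$ homogeneous, consider the ideal $(\mathcal{L}:g) \subseteq R[\underline{w}]$, intersect with $R$, and observe that $\sqrt{(\mathcal{L}:_R g)} \supseteq (x,y)$ because localizing at any prime $\p \not\supseteq (x,y)$ gives $g \in \mathcal{L}_\p$ (as $I_\p$ is of linear type and $g \in \mathcal{A}_\p = \mathcal{L}_\p$), so $(\mathcal{L}:g)$ meets $R \setminus \p$; since this holds for every such $\p$, the radical of $(\mathcal{L} :_R g)$ is $(x,y)$, whence $(x,y)^N g \in \mathcal{L}$. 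This gives $g \in \mathcal{L}:(x,y)^\infty$. The main obstacle is making precise the reduction to primes of $R$ (rather than of $S$) and verifying that the support of $\mathcal{A}/\mathcal{L}$ really collapses onto $V((x,y))$ — one must use both \Cref{Lan locally linear type} for the linear-type locus and \Cref{minimal prime is unique} to identify $(x,y)$ as the only relevant prime, and be slightly careful that these are statements about the relationship between symmetric and Rees algebras after localizing the base ring $R$. Given the reference to \cite{SurajMukundan24}, I expect the intended proof simply invokes a known general result of that flavor, so my job reduces to checking that \Cref{Main Setting} satisfies its hypotheses.
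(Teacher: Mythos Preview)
Your proposal is correct and follows essentially the same approach as the paper: both directions are argued the same way, with primeness of $\mathcal{A}$ giving $\mathcal{L}:(x,y)^\infty\subseteq\mathcal{A}$, and the reverse inclusion coming from \Cref{Lan locally linear type} and \Cref{minimal prime is unique} to show that $\mathcal{A}/\mathcal{L}$ is supported on $V((x,y))$, hence annihilated by a power of $(x,y)$. The paper phrases the second step via $\sqrt{\ann(\mathcal{A}/\mathcal{L})}=(x,y)$ rather than your element-by-element colon argument, but this is only a cosmetic difference.
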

\begin{proof}
    Let $f \in \mathcal{L} : (x,y)^{\infty}$ then $f \in \mathcal{L} : (x,y)^t ; t \in \mathbb{N}$. Thus $x^tf, y^tf \in \mathcal{L} \subseteq \mathcal{A}$. But $\mathcal{A}$ is prime so $f \in  \mathcal{A}$. Thus $\mathcal{L} : (x,y)^{\infty} \subseteq \mathcal{A}$. Now \Cref{Lan locally linear type} implies that 
 $\Supp(\mathcal{A}/ \mathcal{L})\supset  V(I_{n-2}(\varphi))$. Since $\Supp(\mathcal{A}/ \mathcal{L})=V(\ann(\mathcal{A}/ \mathcal{L}))$ so we have $V(\ann(\mathcal{A}/ \mathcal{L})) \supset V(I_{n-2}(\varphi))$. Using \Cref{minimal prime is unique}, we know that $\p=(x,y)\in\Min(I_{n-2}(\varphi))$ is unique. Since $\mathcal{L} : (x,y)^{\infty} \subseteq \mathcal{A}$, we have $\p\in \Supp(\mathcal{A}/\mathcal{L})$. Thus, $\Supp(\mathcal{A}/\mathcal{L})=V(\ann(\mathcal{A}/\mathcal{L}))=V(I_{n-2}(\varphi))$. Again, $\sqrt{I_{n-2}(\varphi)}= \p = (x,y)$. Thus, $\p = (x,y)= \sqrt{\ann(\mathcal{A}/ \mathcal{L})}$. Now, $x,y \in \sqrt{\ann(\mathcal{A}/ \mathcal{L})}$ implies that there exists $t_1,t_2 \in \mathbb{N}$ such that $x^{t_1},y^{t_2} \in \ann(\mathcal{A}/ \mathcal{L})$. Fix $t = \max\{t_1,t_2\}$ then $(x,y)^t \subseteq \ann(\mathcal{A}/ \mathcal{L})$. Therefore, $(x,y)^t \mathcal{A}  \subseteq \mathcal{L}$. Hence $\mathcal{A}= \mathcal{L} : (x,y)^{\infty}$
\end{proof}

\section{Description of Defining ideal of Rees algebra when presentation matrix is in Case I}\label{section 4 and rees algebra in case i}

With the assumption of \Cref{Main Setting}, and that the presentation matrix $\varphi$ is of Case I (see \Cref{case i and ii presentation matrix}). In this section we will first give a better description of the Rees algebra which is comparable to \Cref{colon ideal related to defining ideal of Rees algebra} which is proved in \Cref{another characterization of defining ideal of Rees algebra}. Then we compute this colon ideal using methods in \cite{BoswellMukundan16, KPU11}, where we identify the the defining equations of the Rees algebra to the symbolic power of a height one unmixed ideal in a Cohen-Macaulay ring. Using Grobner basis technique, we give complete characterization of symbolic power of this height one unmixed ideal.

\begin{notation}\label{notation for case I}
Consider the $S$-ideal, $(x,y,zw_0)$ then $[w_0~\cdots~w_{n-1}] \cdot \varphi = [x~y~zw_0] \cdot B(\varphi)$. Here, $B(\varphi)$ is the Jacobian dual of the presentation matrix $\varphi$ with respect to ideal $(x,y,zw_0)$. Now onward, we will follow this definition of Jacobian dual. 

Recall that the ideal defining the symmetric algebra is $\mathcal{L} =(l_1,\dots,l_{n-1})$.  Since $l_{n-2} \in \mathcal{L}$, from the presentation matrix $\varphi$ (see \Cref{case i and ii presentation matrix}) of $I$ we have $l_{n-2}=zw_0 + xg_1 + yg_2$ where $g_i$'s are linear polynomial in $w_j; 1 \leq j \leq n-1$.
\end{notation}

\begin{lemma}\label{ second colon ideal related to defining ideal of Rees algebra}\cite[Lemma 5.1]{SurajMukundan24}
 With the assumption of \cref{Main Setting}, $\mathcal{A}=\mathcal{L}:(x,y,zw_0)^{\infty}$.   
\end{lemma}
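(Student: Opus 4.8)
The plan is to bootstrap the finer colon description from the coarser one already established. By \Cref{colon ideal related to defining ideal of Rees algebra} we have $\mathcal{A}=\mathcal{L}:(x,y)^{\infty}$, and the proof there in fact produces a \emph{uniform} exponent $t$ with $(x,y)^{t}\mathcal{A}\subseteq\mathcal{L}$; I would record this explicitly, since it is what drives the argument. One inclusion is then formal: from $(x,y)\subseteq(x,y,zw_0)$ one gets $(x,y)^{m}\subseteq(x,y,zw_0)^{m}$, whence $\mathcal{L}:(x,y,zw_0)^{\infty}\subseteq\mathcal{L}:(x,y)^{\infty}=\mathcal{A}$. So the whole content lies in the reverse inclusion $\mathcal{A}\subseteq\mathcal{L}:(x,y,zw_0)^{\infty}$, and this is where the Case I shape of the presentation matrix enters.

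The key observation I would isolate is that, modulo $\mathcal{L}$, the element $zw_0$ already lies in the ideal $(x,y)$. Indeed, by \Cref{notation for case I} the $(n-2)$-th symmetric equation has the form $l_{n-2}=zw_0+xg_1+yg_2\in\mathcal{L}$, where $g_1,g_2$ are linear in the $w_j$. Therefore in $\Sym(I)=S/\mathcal{L}$ we have $\overline{zw_0}=-(x\overline{g_1}+y\overline{g_2})\in(x,y)\Sym(I)$, and crucially the two replacement terms $x\overline{g_1}$ and $y\overline{g_2}$ each carry $\underline{x}$-degree exactly one.

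Granting this, the remaining step is an elementary degree count, which I would run as follows. Fix $f\in\mathcal{A}$ and take $N=t$. A generator of $(x,y,zw_0)^{N}$ is a monomial $x^{a}y^{b}(zw_0)^{c}$ with $a+b+c=N$. Passing to $\Sym(I)$ and substituting $\overline{zw_0}=-(x\overline{g_1}+y\overline{g_2})$, the binomial expansion writes $x^{a}y^{b}(zw_0)^{c}\overline{f}$ as a sum of terms $\pm\binom{c}{i}x^{a+i}y^{b+c-i}\,\overline{g_1}^{\,i}\,\overline{g_2}^{\,c-i}\,\overline{f}$. Each such term contains the monomial $x^{a+i}y^{b+c-i}$, of total $\underline{x}$-degree $a+b+c=N=t$, hence lies in $(x,y)^{t}$; since $(x,y)^{t}\mathcal{A}\subseteq\mathcal{L}$, each term annihilates $\overline{f}$. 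Thus $x^{a}y^{b}(zw_0)^{c}f\in\mathcal{L}$ for every generator, so $(x,y,zw_0)^{t}f\subseteq\mathcal{L}$ and $f\in\mathcal{L}:(x,y,zw_0)^{\infty}$.

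The only genuinely substantive input is the reduction $\overline{zw_0}\in(x,y)\Sym(I)$, which is exactly the structural fact supplied by the form of $l_{n-2}$ in Case I; everything after that is the degree bookkeeping above. The one point to verify carefully — and the place where one could slip — is that a single exponent $t$ suffices simultaneously for all $f\in\mathcal{A}$, which is precisely why I would extract the uniform bound $(x,y)^{t}\mathcal{A}\subseteq\mathcal{L}$ from \Cref{colon ideal related to defining ideal of Rees algebra} at the outset rather than an $f$-dependent one.
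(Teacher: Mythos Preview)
Your proof is correct. The paper itself does not supply a proof of this lemma; it simply cites \cite[Lemma 5.1]{SurajMukundan24}, so there is no in-paper argument to compare against. Your approach---reduce to $\mathcal{A}=\mathcal{L}:(x,y)^{\infty}$ from \Cref{colon ideal related to defining ideal of Rees algebra}, then use $l_{n-2}=zw_0+xg_1+yg_2$ to replace every occurrence of $zw_0$ by an element of $(x,y)$ modulo $\mathcal{L}$---is exactly the right idea and the computation is clean.

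Two minor remarks. First, your concern about needing a \emph{uniform} $t$ with $(x,y)^{t}\mathcal{A}\subseteq\mathcal{L}$ is unnecessary: for the inclusion $\mathcal{A}\subseteq\mathcal{L}:(x,y,zw_0)^{\infty}$ it suffices, for each fixed $f\in\mathcal{A}$, to find \emph{some} $N$ with $(x,y,zw_0)^{N}f\subseteq\mathcal{L}$, and your substitution argument produces such an $N$ (namely any $t_f$ with $(x,y)^{t_f}f\subseteq\mathcal{L}$) element by element. The uniform bound is available from the proof of \Cref{colon ideal related to defining ideal of Rees algebra}, but you do not actually need it. Second, the whole argument can be compressed to a single line: since $zw_0\equiv -xg_1-yg_2 \pmod{\mathcal{L}}$, the images of $(x,y)$ and $(x,y,zw_0)$ in $\Sym(I)=S/\mathcal{L}$ coincide as ideals, and hence $\mathcal{L}:(x,y)^{\infty}=\mathcal{L}:(x,y,zw_0)^{\infty}$ because saturating $\mathcal{L}$ with respect to an ideal $J$ only depends on the image of $J$ in $S/\mathcal{L}$. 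Your explicit binomial expansion is just an unwinding of this.
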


\begin{observation}\label{colon ideal in second case}
 Assume the \Cref{Main Setting}, then $$\mathcal{L} : (x,y,zw_0) =\mathcal{L} + I_3(B(\varphi)).$$    
\end{observation}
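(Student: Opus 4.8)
The plan is to prove the two inclusions of $\mathcal{L}:(x,y,zw_0) = \mathcal{L}+I_3(B(\varphi))$ separately. The inclusion ``$\supseteq$'' is elementary, while ``$\subseteq$'' will rest on the residual intersection machinery recalled in \Cref{computation of generic residual intersection}--\Cref{deformation}.

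First I would establish ``$\supseteq$'' via Cramer's rule. By the Jacobian dual identity of \Cref{notation for case I} we have $[l_1\ \cdots\ l_{n-1}] = [x\ y\ zw_0]\cdot B(\varphi)$. Fixing any three columns of $B(\varphi)$ and letting $B'$ be the corresponding $3\times 3$ submatrix, we get $[x\ y\ zw_0]\cdot B' = [l_{j_1}\ l_{j_2}\ l_{j_3}]$; multiplying on the right by $\operatorname{adj}(B')$ and using $B'\operatorname{adj}(B') = \det(B')\,I_3$ yields
\begin{align*}
\det(B')\cdot[x\ y\ zw_0] = [l_{j_1}\ l_{j_2}\ l_{j_3}]\cdot\operatorname{adj}(B'),
\end{align*}
whose entries lie in $\mathcal{L}$. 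Hence $(x,y,zw_0)\,I_3(B(\varphi))\subseteq\mathcal{L}$, and since $\mathcal{L}\subseteq\mathcal{L}:(x,y,zw_0)$ trivially, we obtain $\mathcal{L}+I_3(B(\varphi))\subseteq\mathcal{L}:(x,y,zw_0)$.

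For the reverse inclusion I would recognise $\mathcal{L}:(x,y,zw_0)$ as a residual intersection of a complete intersection. Since $x,y,z$ is a regular sequence and $w_0$ is an indeterminate, $x,y,zw_0$ is a regular sequence in $S$; thus $\mathfrak{c}:=(x,y,zw_0)$ is a grade $3$ complete intersection, hence strongly Cohen--Macaulay and satisfying $\Gs{\infty}$. As each $l_i$ lies in $\mathfrak{c}$ and $\mathcal{L} = (l_1,\dots,l_{n-1})$ is generated by $s := n-1 > 3$ elements, $\mathcal{L}:\mathfrak{c}$ is an $(n-1)$-residual intersection of $\mathfrak{c}$. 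To pin down its generators, introduce a generic $3\times(n-1)$ matrix $Z$ of indeterminates over $S$ and set $\mathcal{L}^{\mathrm{gen}} = I_1([x\ y\ zw_0]\cdot Z)$. By \Cref{computation of generic residual intersection}, applied to the regular sequence $x,y,zw_0$, the generic residual intersection is $\mathcal{L}^{\mathrm{gen}}:\mathfrak{c} = \mathcal{L}^{\mathrm{gen}}+I_3(Z)$, and by \Cref{generic residual intersection is Cohen Macaulay} it is Cohen--Macaulay of codimension $n-1$. Specializing $Z\mapsto B(\varphi)$ carries $\mathcal{L}^{\mathrm{gen}}\mapsto\mathcal{L}$ and $I_3(Z)\mapsto I_3(B(\varphi))$; because $S$ is Gorenstein, the deformation property \Cref{deformation} then identifies $\mathcal{L}:\mathfrak{c}$ with the specialization of $\mathcal{L}^{\mathrm{gen}}+I_3(Z)$, giving the desired equality.

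The hard part will be the legitimacy of this specialization: the colon must not jump, i.e.\ $\mathcal{L}:(x,y,zw_0)$ must genuinely be an $(n-1)$-residual intersection, equivalently $\hgt(\mathcal{L}+I_3(B(\varphi)))\geq n-1$. This is exactly where the hypotheses of \Cref{Main Setting} enter: the $\Gs{2}$ condition together with \Cref{minimal prime is unique} force $V(I_{n-2}(\varphi)) = V(x,y)$, and the resulting control on the singular locus is what yields the required height bound and lets the generic Cohen--Macaulay residual intersection deform flatly onto ours. Verifying this grade estimate for the non-generic matrix $B(\varphi)$, and checking that \Cref{deformation} applies without loss, is the technical heart of the argument; once it is in place, the two inclusions above close the proof.
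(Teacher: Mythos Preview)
Your strategy is the same as the paper's: recognise $\mathcal{L}:(x,y,zw_0)$ as an $(n-1)$-residual intersection of the complete intersection $\mathcal{U}=(x,y,zw_0)$ and then invoke \Cref{computation of generic residual intersection}, \Cref{generic residual intersection is Cohen Macaulay}, \Cref{deformation}. Your Cramer-rule verification of ``$\supseteq$'' is correct (the paper leaves this implicit in the residual-intersection formalism).

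The gap is that you correctly isolate the height bound $\hgt(\mathcal{L}:\mathcal{U})\geq n-1$ as the crux, but you do not prove it; you only gesture at ``control on the singular locus''. Without it, \Cref{deformation} does not apply: being an $(n-1)$-residual intersection is by definition the requirement $\hgt(\mathcal{L}:\mathcal{U})\geq n-1$, so this is a hypothesis to be checked, not a consequence. The paper fills this in with a short but essential case split on a minimal prime $P$ of $\mathcal{L}:\mathcal{U}$, with $p=P\cap R$. If $(x,y)\subseteq p$, then $(x,y)+I_3(B(\varphi))\subseteq P$; since $I_3(B(\varphi))$ is a prime ideal of height $n-3$, this forces $\hgt P\geq n-1$. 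If $(x,y)\not\subseteq p$, then by \Cref{Lan locally linear type} together with \Cref{minimal prime is unique} the ideal $I_p$ is of linear type, whence $(\mathcal{L}:\mathcal{U})_p=\mathcal{L}_p=\mathcal{A}_p$, so $\mathcal{A}\subseteq P$ and again $\hgt P\geq n-1$. You mention exactly these two inputs (\Cref{minimal prime is unique} and the $\Gs{2}$ condition) but never assemble them into this dichotomy; writing this paragraph is what turns your outline into a proof.
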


\begin{proof}
Let $\mathcal{U}=(x,y,zw_0)$, then $\mathcal{U}$ is a complete intersection. We show that $\mathcal{L}: \mathcal{U}$ is the $(n-1)$-residual intersection of $\mathcal{U}$ then \Cref{deformation}, \Cref{computation of generic residual intersection} and \Cref{generic residual intersection is Cohen Macaulay} imply $\mathcal{L}:\mathcal{U}=\mathcal{L}+I_3(B(\varphi))$. It suffices to show that $\hgt (\mathcal{L}:\mathcal{U}) \geq n-1$. 

Let $P$ be a minimal prime of $\mathcal{L}: \mathcal{U}$. Suppose $p=P \bigcap R$. Now, $(x,y) \subseteq p$ or $(x,y) \nsubseteq p$. If $(x,y) \subseteq p$ then $(x,y)+I_3(B(\varphi)) \subseteq P$. Since, $I_3(B(\varphi))$ is prime ideal of height $n-3$, therefore $\hgt((x,y)+I_3(B(\varphi))) \geq n-1$. Thus, $\hgt P \geq n-1$. If $(x,y) \nsubseteq p$, then $(\mathcal{L}:\mathcal{U})_p \subseteq \mathcal{A}_p$. But \Cref{Lan locally linear type}, \Cref{minimal prime is unique} implies  $\mathcal{L}_p=\mathcal{A}_p$. Thus, $(\mathcal{L}:\mathcal{U})_p=\mathcal{A}_p$. This implies $\mathcal{A} \subseteq P$ and hence $\hgt P \geq n-1$. 
\end{proof}

One can in fact give the exact saturation degree of the symmetric algebra with respect to $(x,y,zw_0)$. This is in fact dependent only on the degree of the entries in the last column.

\begin{theorem}\cite{SurajMukundan24}\label{Defining ideal in form of saturation of an ideal}
    Assume the \cref{Main Setting}, then one has  $\mathcal{A}=\mathcal{L}:(x,y,zw_0)^m$, where $m \geq 2$.
\end{theorem}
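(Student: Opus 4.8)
The plan is to prove $\mathcal{A}=\mathcal{L}:\mathcal{U}^m$ for $\mathcal{U}=(x,y,zw_0)$ and every $m\geq 2$ by establishing the two inclusions separately. One inclusion is automatic: since $(x,y)^m\subseteq\mathcal{U}^m$ we get $\mathcal{L}:\mathcal{U}^m\subseteq\mathcal{L}:(x,y)^m\subseteq\mathcal{L}:(x,y)^\infty=\mathcal{A}$ by \Cref{colon ideal related to defining ideal of Rees algebra}. Hence the whole content is the reverse inclusion for $m=2$, namely $\mathcal{A}\subseteq\mathcal{L}:\mathcal{U}^2$, equivalently $\mathcal{U}^2\mathcal{A}\subseteq\mathcal{L}$; the cases $m>2$ then follow because $\mathcal{L}:\mathcal{U}^2\subseteq\mathcal{L}:\mathcal{U}^m\subseteq\mathcal{A}$ would force equality throughout.

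First I would reduce the saturating ideal from $\mathcal{U}$ to $(x,y)$. By \Cref{notation for case I} we have $l_{n-2}=zw_0+xg_1+yg_2\in\mathcal{L}$, so the class $\overline{zw_0}$ lies in $(\overline{x},\overline{y})$ inside $\Sym(I)=S/\mathcal{L}$. This gives $\mathcal{U}^k+\mathcal{L}=(x,y)^k+\mathcal{L}$ for all $k$, and a one-line computation then yields $\mathcal{L}:\mathcal{U}^k=\mathcal{L}:(x,y)^k$. Thus it suffices to show that the $(x,y)$-saturation of $\mathcal{L}$ stabilizes at the second step, i.e. $(x,y)^2\,\mathcal{A}\subseteq\mathcal{L}$; reformulated, the torsion module $\mathcal{A}/\mathcal{L}=\HH^0_{(x,y)}(\Sym(I))$ must be annihilated by $(x,y)^2$. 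It is useful to observe here that reducing $\mathcal{L}$ modulo $(x,y)$ collapses every generator except $l_{n-2}$ (the first $n-3$ columns and the quadratic last column of $\varphi$ have entries in $(x,y)$), giving the clean identification $\Sym(I)\otimes_R R/(x,y)\cong\k[z,w_0,\dots,w_{n-1}]/(zw_0)$, a hypersurface whose single relation $zw_0$ carries the quadratic behaviour responsible for the exponent $2$.

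Next I would invoke the residual intersection structure already in place. By \Cref{colon ideal in second case} the first colon $J:=\mathcal{L}:\mathcal{U}=\mathcal{L}+I_3(B(\varphi))$ is a Cohen--Macaulay, hence unmixed, ideal of height $n-1$ by \Cref{computation of generic residual intersection} and \Cref{generic residual intersection is Cohen Macaulay}. Since $\mathcal{A}$ is prime of height $n-1$ (as $\dim\mathcal{R}(I)=\dim R+1=4$ while $S$ has $n+3$ variables) and contains $J$, it is a minimal associated prime of $J$; and because $J:\mathcal{U}^\infty=\mathcal{A}$ is prime, $\mathcal{A}$ is the unique associated prime of $J$ not containing $\mathcal{U}$, all the others containing $\mathcal{U}$. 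As $J\subseteq J:\mathcal{U}\subseteq\mathcal{A}$ always holds, the target equality $\mathcal{L}:\mathcal{U}^2=J:\mathcal{U}=\mathcal{A}$ is equivalent to the single statement $\mathcal{U}\mathcal{A}\subseteq J$, i.e. one further colon by $\mathcal{U}$ already annihilates the $\mathcal{U}$-torsion of $S/J$.

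The hard part is precisely this last equality $\mathcal{U}\mathcal{A}\subseteq J$: showing the saturation stabilizes after exactly the second step, neither sooner nor later. This is where the degree-$2$ hypothesis on the last column is indispensable — the $n-2$ (essentially linear) columns are absorbed by the single residual-intersection step producing $J$, while the quadratic last column forces exactly one further colon, in the spirit of the iterated Jacobian dual of \cite{Morey-Ulrich,BoswellMukundan16}. To make this quantitative I would localize at the unique relevant prime $(x,y)$ (unique by \Cref{minimal prime is unique}) and bound the annihilator exponent of $\mathcal{A}/\mathcal{L}$ there, matching it against the $\underline{w}$-degree $2$ of the generators of $I_3(B(\varphi))$, whose minors have bidegrees $(0,2)$ and $(1,2)$. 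Equivalently, the exponent $2$ is exactly the reason a \emph{second} symbolic power $\overline{\mathcal{K}}^{(2)}$ surfaces in the final description of $\overline{\mathcal{A}}$, so the cleanest completion is to defer to the explicit Gr\"obner-basis computation of that symbolic power in \Cref{w_0 is regular over ideal}--\Cref{subcase four}, which simultaneously certifies that $(x,y)^2$ kills the torsion while $(x,y)$ alone does not, pinning $m=2$.
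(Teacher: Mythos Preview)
The paper does not supply its own proof of this theorem; it is merely cited from \cite{SurajMukundan24}. Hence there is no in-paper argument to compare against, and I can only assess your proposal on its own merits.

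Your first two paragraphs are fine: the easy inclusion and the reduction from $\mathcal{U}=(x,y,zw_0)$ to $(x,y)$ modulo $\mathcal{L}$ via $l_{n-2}=zw_0+xg_1+yg_2$ are correct, and the identification of the hard part as $\mathcal{U}\mathcal{A}\subseteq J$ (equivalently, stabilization of the saturation at the second step) is accurate.

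The gap is in your final paragraph. You do not prove the hard part; you defer it to the Gr\"obner-basis computations of \Cref{w_0 is regular over ideal}--\Cref{subcase four}. But those results compute $\overline{\mathcal{K}}^{(2)}$, not the annihilator of $\mathcal{A}/\mathcal{L}$. The only bridge in the paper from $\overline{\mathcal{K}}^{(2)}$ back to a description of $\overline{\mathcal{A}}$ is \Cref{another characterization of defining ideal of Rees algebra}, and its proof explicitly uses the equality $\overline{\mathcal{A}}_P=\overline{(l_{n-1})}_P:\overline{(x,y,zw_0)}^{(2)}_P$, which is precisely \Cref{Defining ideal in form of saturation of an ideal} pushed down to $B$. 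So the deferral is circular: you are invoking consequences of the theorem to justify the theorem itself. A non-circular proof must bound the saturation exponent of $\mathcal{L}$ with respect to $(x,y)$ \emph{a priori}---for instance via a regularity or local-cohomology bound on $\Sym(I)$ as in \cite{SurajMukundan24}---independently of any later description of $\overline{\mathcal{A}}$.
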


\begin{notation}\label{notation 5.1}
Consider the ideal $\mathcal{U}=(x,y,zw_0)$ and define $\mathcal{J}= (l_1,l_2,\dots,l_{n-2}): \mathcal{U}$ in the ring $S=R[\underline{x}, \underline{w}]$. Let $B$ be the ring $S/{\mathcal{J}}$. Let $f \in S$ then $\overline{f}$ denote the image in ring $B$.
Let $\varphi''$ 
 be $n \times (n-2)$ 
 matrix obtained by deleting the last column of the matrix $\varphi$. Let $B(\varphi'')$ be the Jacobian dual matrix of $\varphi''$.
 
Let $A=(a_{ij})$ be the $2 \times (n-2)$ matrix obtained by deleting the last row from $B(\varphi'')$. Define the $S$-ideal 
\begin{align*}
\mathcal{K}=(l_1,\dots,l_{n-2}) + I_{2}(A) + (zw_0).    
\end{align*}

We fix $(n-2)$-th column of $A$ and  for $i=1,2,\cdots,{n-3}$, we define $2\times 2$ sub matrix $A_i$ of $A$ such that first column of $A_i$ is the $i$-th column of $A$ and second column of $A_i$ the $(n-2)$-th column of $A$. Denote $\det(A_i)=\a_i$. Also, for $1 \leq i < j \leq n-3$, let $C_{ij}$ be the $2 \times 2$ submatrix of $B(\varphi'')$ such that the first and second columns of $C_{ij}$ are the $i$-th and $j$-th columns of $B(\varphi'')$. Denote $c_{ij}=\det(C_{ij})$.
Clearly $I_2(A)=(\a_1,\a_2,\cdots,\a_{n-3})+(\{c_{ij} ; 1\leq i <j \leq n-3 \})$.

Let $A_{i_x}$ is the matrix obtained from $A_i$ after replacing the first row of $A_i$ by the row vector $[0~1]$, similarly we can define $A_{i_y}$. Denote $\a_{i_x}=\det(A_{i_x})$ and $\a_{i_y}=\det(A_{i_y})$. Clearly $a_{i_x}$ and $a_{i_y}$ are linear polynomials in $w_j$'s.

Also, $l_{n-2}=zw_0+ xg_1+yg_2$, where $g_1,g_2$ are linear polynomials in $w_j ; 1 \leq j \leq n-1$. Set $ \widetilde{l_{n-2}}=l_{n-2}-zw_0$ where $l_{n-2} \in \mathcal{L}$ is the $(n-2)$-th symmetric equation. 
\end{notation} 

\begin{lemma}\label{colon ideal of K}
    The ring $B$ is a Cohen-Macaulay domain of dimension $5$. And the $B$-ideal, $\overline{\mathcal{K}}$ is Cohen-Macaulay ideal of height one. We also have the equalities of $B$-ideals,
    \begin{align*}
\overline{\mathcal{K}}^{(j)} = \overline{(zw_0)}^{(j)} : \overline{(x,y,zw_0)}^{(j)} \\ \overline{(x,y,zw_0)}^{(j)} = \overline{(zw_0)}^{(j)} : \overline{\mathcal{K}}^{(j)} ; 1 \leq j \leq 2.
    \end{align*}
\end{lemma}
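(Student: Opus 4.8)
The plan is to present $\mathcal{J}$ and $\mathcal{K}$ as residual intersections of the complete intersection $\mathcal{U}=(x,y,zw_0)$, deduce the Cohen--Macaulay and dimension claims from the Huneke--Ulrich machinery, and then read the symbolic-power identities off as linkage in codimension one. First I would note that $x,y,zw_0$ is a regular sequence in the Gorenstein ring $S$ (indeed $S/(x,y)=\k[z,\underline{w}]$ and $zw_0$ is a nonzerodivisor there), so $\mathcal{U}$ is a height-three complete intersection, hence strongly Cohen--Macaulay and $\Gs{\infty}$. The Jacobian dual identity $[\underline{w}]\cdot\varphi''=[x\ y\ zw_0]\cdot B(\varphi'')$ shows $(l_1,\dots,l_{n-2})\subseteq\mathcal{U}$, so $\mathcal{J}=(l_1,\dots,l_{n-2}):\mathcal{U}$ is an $(n-2)$-residual intersection once $\hgt\mathcal{J}\ge n-2$ is checked; exactly as in \Cref{colon ideal in second case} this also yields $\mathcal{J}=(l_1,\dots,l_{n-2})+I_3(B(\varphi''))$. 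Then \Cref{computation of generic residual intersection}, \Cref{generic residual intersection is Cohen Macaulay} and \Cref{deformation} give that $\mathcal{J}$ is perfect of grade $n-2$, so $B=S/\mathcal{J}$ is Cohen--Macaulay of dimension $(n+3)-(n-2)=5$. Since $\mathcal{U}$ is $\Gs{\infty}$ the residual intersection is geometric; the domain property then follows from the primeness of the generic determinantal ideal together with a unique-minimal-prime argument for the specialization $X\mapsto B(\varphi'')$, controlling $V(\mathcal{J})$ away from $V(\mathcal{U})$ via \Cref{Lan locally linear type} and \Cref{minimal prime is unique}.

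To identify $\overline{\mathcal{K}}$ I would adjoin the generator $zw_0$. The ideal $\mathfrak{a}=(l_1,\dots,l_{n-2},zw_0)\subseteq\mathcal{U}$ has $n-1$ generators, so $\mathfrak{a}:\mathcal{U}$ is an $(n-1)$-residual intersection whose Jacobian dual is $[\,B(\varphi'')\mid e_3\,]$ with $e_3=[0\ 0\ 1]^{t}$. Expanding the $3\times3$ minors of this matrix along the last column produces the $2\times2$ minors of $A$, while the minors not meeting that column are $I_3(B(\varphi''))$, which already lie in $I_2(A)$ by cofactor expansion along the third row; hence $\mathfrak{a}:\mathcal{U}=\mathfrak{a}+I_2(A)=\mathcal{K}$ and, in particular, $\mathcal{J}\subseteq\mathcal{K}$. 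By \Cref{generic residual intersection is Cohen Macaulay} and \Cref{deformation} again, $\mathcal{K}$ is perfect of grade $n-1$, so $S/\mathcal{K}$ is Cohen--Macaulay of dimension $4$ and $\overline{\mathcal{K}}=\mathcal{K}/\mathcal{J}$ is a height-one Cohen--Macaulay $B$-ideal. The equality $\mathfrak{a}:\mathcal{U}=\mathcal{K}$ then gives the first identity at $j=1$: the inclusion $\overline{\mathcal{K}}\subseteq(\overline{zw_0}):_B\overline{\mathcal{U}}$ is immediate, and conversely if $f\mathcal{U}\subseteq(zw_0)+\mathcal{J}$ then $f\mathcal{U}^2\subseteq(zw_0)+(l_1,\dots,l_{n-2})=\mathfrak{a}$, whence $f\in\mathfrak{a}:\mathcal{U}^\infty=\mathfrak{a}:\mathcal{U}=\mathcal{K}$, the middle equality holding because the geometric residual intersection is $\mathcal{U}$-saturated.

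For the reverse colon and the second symbolic powers I would pass to codimension one. Every ideal in the statement --- the symbolic powers $\overline{\mathcal{K}}^{(j)}$, $\overline{\mathcal{U}}^{(j)}$, $(\overline{zw_0})^{(j)}$ and the colons built from them --- is unmixed of height one, with associated primes among the height-one minimal primes of $(\overline{zw_0})=(\bar{z})(\bar{w}_0)$, $\overline{\mathcal{K}}$ and $\overline{\mathcal{U}}$; hence each asserted equality may be tested after localizing at such a prime $\mathfrak{p}$, where the symbolic powers become ordinary powers in the one-dimensional Cohen--Macaulay local domain $B_{\mathfrak{p}}$. There $\overline{\mathcal{K}}$ and $\overline{\mathcal{U}}$ are linked through the height-one complete intersection $(\overline{zw_0})$, so inverting the double colon recovers $\overline{\mathcal{U}}^{(1)}=(\overline{zw_0}):_B\overline{\mathcal{K}}$ from the $j=1$ identity. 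In a discrete valuation localization, writing the three ideals as $(\pi^{a}),(\pi^{b}),(\pi^{c})$ the case $j=1$ reads $a=c-b$, and applying the colon to the doubled exponents $2b,2c$ returns $2a$, which gives the $j=2$ identities by the same local comparison.

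The main obstacle is precisely this codimension-one step for $j=2$: the double-colon inversion and the scaling of valuations are transparent only where $B_{\mathfrak{p}}$ is a discrete valuation ring, so the argument must confront the height-one primes at which $B$ fails to be regular. Establishing the symbolic-square identity at those primes is what forces the explicit Gr\"obner basis description of $\overline{\mathcal{K}}^{(2)}$ in the results that follow; pinning down $\overline{\mathcal{K}}^{(2)}$ concretely, rather than the structural residual-intersection input, is the delicate part of the proof.
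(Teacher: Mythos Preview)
Your residual-intersection setup is exactly the approach of \cite{SurajMukundan24} that the paper invokes: presenting $\mathcal{J}$ and $\mathcal{K}$ as an $(n-2)$- and $(n-1)$-residual intersection of the complete intersection $\mathcal{U}=(x,y,zw_0)$ yields the Cohen--Macaulay and height statements, and your $j=1$ argument (including the saturation $\mathfrak{a}:\mathcal{U}^\infty=\mathfrak{a}:\mathcal{U}$ via geometricity) is correct.

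The gap is in your last paragraph. You do not need $B_{\mathfrak p}$ to be a DVR, and the later Gr\"obner computations play no role in this lemma. The geometric residual-intersection property gives $\hgt(\overline{\mathcal K}+\overline{\mathcal U})\geq 2$ in $B$, so at every height-one prime $\mathfrak p$ exactly one of $\overline{\mathcal K}_{\mathfrak p}$, $\overline{\mathcal U}_{\mathfrak p}$ is the unit ideal. If $\overline{\mathcal U}_{\mathfrak p}=B_{\mathfrak p}$, the $j=1$ identity reads $\overline{\mathcal K}_{\mathfrak p}=(\overline{zw_0})_{\mathfrak p}$, and squaring this equality of ideals in the one-dimensional CM local domain $B_{\mathfrak p}$ gives $(\overline{\mathcal K}^{(2)})_{\mathfrak p}=\overline{\mathcal K}_{\mathfrak p}^{\,2}=(\overline{zw_0})_{\mathfrak p}^{2}=(\overline{zw_0})_{\mathfrak p}^{2}:\overline{\mathcal U}_{\mathfrak p}^{(2)}$ with no valuation argument. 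The case $\overline{\mathcal K}_{\mathfrak p}=B_{\mathfrak p}$ is symmetric, using the companion identity $\overline{\mathcal U}_{\mathfrak p}=(\overline{zw_0})_{\mathfrak p}$. Since all four ideals in the $j=2$ assertions are unmixed of height one, this local check at height-one primes suffices, and the lemma is complete before any explicit description of $\overline{\mathcal K}^{(2)}$ is attempted. The Gr\"obner work in \Cref{w_0 is regular over ideal} and its sequels computes generators of $\overline{\mathcal K}^{(2)}$ \emph{using} the identity proved here; it is not an ingredient in its proof.
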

\begin{proof}
   Proof is similar to the proof \cite[Observation 5.6, Observation 5.7, Lemma 5.10]{SurajMukundan24}.
\end{proof}

Now define the $B$-ideal $\mathcal{D} =\frac {\overline{l_{n-1}}\overline{\mathcal{K}}^{(2)}}{\overline{z^2}\overline{w_0^2}}$. Since $l_{n-1} \in (x,y,zw_0)^2 \subseteq (x,y,zw_0)^{(2)}$, by \Cref{colon ideal of K}, $\overline{l_{n-1}}\overline{\mathcal{K}}^{(2)} \subseteq \overline{(zw_0)}^2 $. Thus the $B$-ideal, $\mathcal{D}$ is well defined.
Also $\overline{l_{n-1}} \in \overline{\mathcal{A}}$ so $\overline{z^2w_0^2}\mathcal{D} \subseteq \overline{\mathcal{A}}$. But $\overline{\mathcal{A}}$ is prime and $\overline{zw_0} \notin \overline{\mathcal{A}}$ therefore  $\mathcal{D} \subseteq \overline{\mathcal{A}}$.

\begin{theorem}\label{another characterization of defining ideal of Rees algebra}
    Assume the \Cref{Main Setting}, the $B$-ideals $\mathcal{D}$ and $\mathcal{\overline{A}}$ are equal.
\end{theorem}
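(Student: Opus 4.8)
The plan is to prove the two inclusions $\mathcal{D} \subseteq \overline{\mathcal{A}}$ and $\overline{\mathcal{A}} \subseteq \mathcal{D}$ separately. The forward inclusion $\mathcal{D} \subseteq \overline{\mathcal{A}}$ is essentially already established in the paragraph preceding the statement: since $l_{n-1} \in (x,y,zw_0)^2 \subseteq (x,y,zw_0)^{(2)}$, \Cref{colon ideal of K} gives $\overline{l_{n-1}}\,\overline{\mathcal{K}}^{(2)} \subseteq \overline{(zw_0)}^2$, so $\mathcal{D}$ is a well-defined $B$-ideal, and since $\overline{z^2w_0^2}\mathcal{D} = \overline{l_{n-1}}\,\overline{\mathcal{K}}^{(2)} \subseteq \overline{\mathcal{A}}$ with $\overline{\mathcal{A}}$ prime and $\overline{zw_0} \notin \overline{\mathcal{A}}$, we conclude $\mathcal{D} \subseteq \overline{\mathcal{A}}$. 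So the real content is the reverse inclusion.

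For the reverse inclusion I would exploit the saturation description of $\mathcal{A}$. By \Cref{Defining ideal in form of saturation of an ideal} we have $\mathcal{A}=\mathcal{L}:(x,y,zw_0)^2$, and passing to $B = S/\mathcal{J}$ where $\mathcal{J} = (l_1,\dots,l_{n-2}):(x,y,zw_0)$, the image $\overline{\mathcal{A}}$ becomes the saturation $(\overline{l_{n-1}}):\overline{(x,y,zw_0)}^2$ of the principal ideal generated by the last symmetric equation (modulo $\mathcal{J}$, the first $n-2$ symmetric equations are already accounted for). So the goal reduces to showing
\begin{align*}
\big(\overline{l_{n-1}}\big) : \overline{(x,y,zw_0)}^2 \;\subseteq\; \frac{\overline{l_{n-1}}\,\overline{\mathcal{K}}^{(2)}}{\overline{z^2w_0^2}}.
\end{align*}
Take $\overline{f} \in \overline{\mathcal{A}}$, so $\overline{f}\cdot\overline{(x,y,zw_0)}^2 \subseteq (\overline{l_{n-1}})$. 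The strategy is to write $\overline{f} = \overline{l_{n-1}}\,\overline{h}/\overline{z^2w_0^2}$ for a suitable $\overline{h}$ and then verify $\overline{h} \in \overline{\mathcal{K}}^{(2)}$ using the symbolic-power colon identities of \Cref{colon ideal of K}. Concretely, since $\overline{zw_0} \notin \overline{\mathcal{A}}$ and $B$ is a domain, I would clear denominators: from $\overline{f}\cdot\overline{z^2w_0^2} \in (\overline{l_{n-1}})$ one obtains $\overline{f}\,\overline{z^2w_0^2} = \overline{l_{n-1}}\,\overline{h}$ for some $\overline{h} \in B$, and the task becomes to show $\overline{h}$ lands in the symbolic square $\overline{\mathcal{K}}^{(2)} = \overline{(zw_0)}^2 : \overline{(x,y,zw_0)}^2$. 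This last membership follows by checking that $\overline{h}\cdot\overline{(x,y,zw_0)}^2 \subseteq \overline{(zw_0)}^2 = \overline{(z^2w_0^2)}$, which in turn is exactly the condition $\overline{f} \in \overline{\mathcal{A}}$ translated through $\overline{f}\,\overline{z^2w_0^2} = \overline{l_{n-1}}\,\overline{h}$ together with $l_{n-1} \in (x,y,zw_0)^2$.

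The main obstacle I anticipate is the bookkeeping around the two colon identities in \Cref{colon ideal of K}, namely the duality $\overline{\mathcal{K}}^{(j)} = \overline{(zw_0)}^{(j)}:\overline{(x,y,zw_0)}^{(j)}$ and $\overline{(x,y,zw_0)}^{(j)} = \overline{(zw_0)}^{(j)}:\overline{\mathcal{K}}^{(j)}$ for $j \leq 2$. The delicate point is that these are stated for symbolic powers $\overline{(zw_0)}^{(j)}$ and $\overline{(x,y,zw_0)}^{(j)}$, whereas the clearing-denominators argument naturally produces ordinary powers; I must argue that in the height-one Cohen-Macaulay situation over the domain $B$ the relevant ordinary and symbolic powers of these unmixed ideals agree (or that $\overline{z^2w_0^2}$ generates $\overline{(zw_0)}^{(2)}$), so that the well-definedness of $\mathcal{D}$ and the saturation are compatible. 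Once this identification is in place, the reverse inclusion is a direct manipulation: given $\overline{f}\in\overline{\mathcal{A}}$, produce $\overline{h}$ with $\overline{f}\,\overline{z^2w_0^2}=\overline{l_{n-1}}\,\overline{h}$ and invoke the first colon identity of \Cref{colon ideal of K} with $j=2$ to conclude $\overline{h}\in\overline{\mathcal{K}}^{(2)}$, hence $\overline{f}\in\mathcal{D}$. Combining with the forward inclusion gives $\mathcal{D}=\overline{\mathcal{A}}$.
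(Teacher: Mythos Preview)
Your approach is genuinely different from the paper's. The paper never attempts the element-by-element inclusion $\overline{\mathcal{A}}\subseteq\mathcal{D}$; instead it argues structurally: since $\mathcal{D}\cong\overline{\mathcal{K}}^{(2)}$ as $B$-modules and $\overline{\mathcal{K}}^{(2)}$ is unmixed of height one, so is $\mathcal{D}$, and therefore it suffices to verify $\mathcal{D}_P=\overline{\mathcal{A}}_P$ at each $P\in\Ass(\mathcal{D})$. The paper then runs a short case analysis on whether $\overline{(x,y,z)}\subseteq P$, $\overline{(x,y,w_0)}\subseteq P$, or neither, invoking \Cref{Lan locally linear type} for the last case. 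Your direct colon manipulation is arguably more elementary, while the paper's localization argument is cleaner and avoids the symbolic-versus-ordinary bookkeeping altogether.

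That bookkeeping is where your sketch has a real gap. You establish $\overline{h}\cdot\overline{(x,y,zw_0)}^2\subseteq(\overline{z^2w_0^2})$ and then want $\overline{h}\in\overline{\mathcal{K}}^{(2)}=\overline{(zw_0)}^{(2)}:\overline{(x,y,zw_0)}^{(2)}$. The equality $(\overline{zw_0})^2=(\overline{zw_0})^{(2)}$ is fine (principal ideal in a Cohen--Macaulay domain), but there is no reason to expect $\overline{(x,y,zw_0)}^2=\overline{(x,y,zw_0)}^{(2)}$ globally, so the phrase ``the relevant ordinary and symbolic powers agree'' is not a legitimate step as stated. What rescues your argument is this: the containment $\overline{h}\,\overline{(x,y,zw_0)}^{(2)}\subseteq(\overline{zw_0})^{(2)}$ only has to be checked at the height-one associated primes $P$ of the unmixed ideal $(\overline{zw_0})^{(2)}$, and at any such $P$ either $\overline{(x,y,zw_0)}\nsubseteq P$ (so both powers localize to $B_P$) or $P$ is a minimal prime of $\overline{(x,y,zw_0)}$ (so $\overline{(x,y,zw_0)}^{(2)}_P=\overline{(x,y,zw_0)}^{2}_P$). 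Once you make that local check explicit, your proof is complete; as written, the obstacle you flag is not resolved. Incidentally, the clause ``together with $l_{n-1}\in(x,y,zw_0)^2$'' in your penultimate paragraph is not what is doing the work there---you need $B$ a domain and $\overline{l_{n-1}}\neq 0$ to cancel, nothing more.
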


\begin{proof}
Since $B$ is Cohen-Macaulay and $\overline{\mathcal{K}}^{(2)}$ is unmixed of height one. Also $\mathcal{D} \subseteq \overline{\mathcal{A}}$ this implies that $\mathcal{D}$ is the proper ideal of $B$, and $\overline{K}^{(2)} \cong \mathcal{D}$ so we have $\mathcal{D}$ is also unmixed of the height one by Serre conditions. Thus we need to show that $\mathcal{D}$ and $\overline{\mathcal{A}}$ are equal locally at height one associated primes.

Also, $\overline{\mathcal{A}}$ is height one prime ideal and $\overline{(x,y,z)} \nsubseteq \overline{\mathcal{A}}$ and $\overline{(x,y,w_0)} \nsubseteq \overline{\mathcal{A}}$. Let $P \in Ass(\mathcal{D})$, if $ \overline{(x,y,z)} \subseteq P$ or $\overline{(x,y,w_0)} \subseteq P$, then $\overline{\mathcal{K}}\nsubseteq P$. Thus
\begin{align*}
    \overline{(x,y,zw_0)}^{(2)}_{P} =\overline{(zw_0)}^{2}_{P} : \overline{\mathcal{K}}^{(2)}_{P}= \overline{(zw_0)}^{2}_{P} : B_{P}= \overline{(zw_0)}^{2}_{P}.
\end{align*}
Also,  $\overline{\mathcal{A}}_{P} = \overline{(l_{n-1})}_{P} : \overline{(x,y,zw_0)}^{(2)}_P$, that is, $B_{P}= \overline{(l_{n-1})}_{P} : \overline{(x,y,zw_0)}^{(2)}_{P}$, this will imply $\overline{(x,y,zw_0)}^{(2)}_{P} \subseteq \overline{(l_{n-1})}_{P}$. But $\overline{(l_{n-1})}_{P} \subseteq \overline{(x,y,zw_0)}^{(2)}_{P}$ so $\overline{(l_{n-1})}_{P} = \overline{(x,y,zw_0)}^{(2)}_{P}$. Therefore, $\overline{(x,y,zw_0)}^{(2)}_{P}= \overline{(zw_0)}^{2}_{P} = \overline{(l_{n-1})}_{P}$. Thus, $\mathcal{D}_{P} =\frac {\overline{(l_{n-1})}_{P} \overline{\mathcal{K}}^{(2)}_{P}} {\overline{z^2}\overline{w_0^2}} = \frac {\overline{(zw_0)}^{2}_{P} \overline{\mathcal{K}}^{(2)}_{P}}{\overline{z^2}\overline{w_0^2}}= B_{P} = \overline{\mathcal{A}}_{P}$.

If $\overline{(x,y,z)} \nsubseteq P$ and $\overline{(x,y,w_0)} \nsubseteq P$ then $I_{P}$ will be of linear type (\Cref{Lan locally linear type}), so $\overline{\mathcal{A}}_{P} = \overline{(l_{n-1})}_{P}$ but $\overline{(l_{n-1})}_{P} \subseteq \mathcal{D}_{P}$ therefore $\overline{\mathcal{A}}_{P}= \mathcal{D}_{P}$. Hence $\overline{\mathcal{A}}_{P} = \mathcal{D}_{P}$, for all $P\in \Ass(\mathcal{D})$.
\end{proof}

Now we have $\overline{\mathcal{K}}^2 \subseteq \overline{\mathcal{K}}^{(2)}$. If $\overline{\mathcal{K}}^2=\overline{\mathcal{K}}^{(2)}$ then we can explicitly give the generators of $\overline{\mathcal{A}}$. Therefore, we are interested for the case where $\overline{\mathcal{K}}^2 \neq \overline{\mathcal{K}}^{(2)}$. From \Cref{notation 5.1}, $B(\varphi'')$ is $3 \times (n-2)$ matrix, consider the ideal $I_2(B(\varphi''))$. Now, either $w_0 \in I_2(B(\varphi''))$ or $w_0 \notin I_2(B(\varphi''))$. Thus, depending on this condition, we will explicitly characterize the $B$-ideal $\overline{\mathcal{K}}^{(2)}$.

To compute the symbolic power, we use the description $\overline{\mathcal{K}}^{(2)}=\overline{\mathcal{K}}^2:(s)^\infty$, where $s$ is a non-zero divisor on $\overline{\mathcal{K}}$ and $s$ is in every embedded associated prime of $\overline{\mathcal{K}}^2$.

\begin{lemma}\label{xi is nzd on K}
Consider the $B$-ideals $(\overline{x})$. Then in the ring $B$,  $\overline{\mathcal{K}} :(\overline{x}) = \overline{\mathcal{K}}$.
\end{lemma}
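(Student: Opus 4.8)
The plan is to show that $\overline{x}$ is a nonzerodivisor on $B/\overline{\mathcal{K}}$ by proving that $\overline{x}$ lies in no associated prime of $\overline{\mathcal{K}}$. By \Cref{colon ideal of K} the ring $B$ is a Cohen--Macaulay domain of dimension $5$ and $\overline{\mathcal{K}}$ is a Cohen--Macaulay ideal of height one; hence $B/\overline{\mathcal{K}}$ is Cohen--Macaulay of dimension $4$, and in particular unmixed, so every associated prime of $\overline{\mathcal{K}}$ is a minimal prime of height one. Thus it suffices to prove that $\overline{x}$ is contained in no minimal prime $P$ of $\overline{\mathcal{K}}$.

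Next I would pin down these minimal primes. From \Cref{colon ideal of K} we have $\overline{\mathcal{K}}=\overline{(zw_0)}:\overline{(x,y,zw_0)}$, and since $\overline{(zw_0)}:\overline{(zw_0)}=B$ in the domain $B$, the identity $\mathfrak a:(\mathfrak b+\mathfrak c)=(\mathfrak a:\mathfrak b)\cap(\mathfrak a:\mathfrak c)$ gives $\overline{\mathcal{K}}=\overline{(zw_0)}:\overline{(x,y)}$; in particular $\overline{zw_0}\in\overline{\mathcal{K}}$ with $\overline{zw_0}\neq 0$. If $P$ is any minimal prime of $\overline{\mathcal{K}}$, then $\overline{z}\cdot\overline{w_0}=\overline{zw_0}\in P$, so $P$ contains $\overline{z}$ or $\overline{w_0}$. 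Suppose now, for contradiction, that $\overline{x}\in P$ for some minimal prime $P$ of $\overline{\mathcal{K}}$. Then $P\supseteq(\overline{x},\overline{z})$ or $P\supseteq(\overline{x},\overline{w_0})$, and I would finish by showing that each of $(\overline{x},\overline{z})$ and $(\overline{x},\overline{w_0})$ has height at least $2$ in $B$, contradicting $\hgt P=1$.

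Because $B$ is a Cohen--Macaulay domain, $\overline{x}$ is a nonzerodivisor, so these two height bounds are equivalent to the assertions that $\overline{x},\overline{z}$ and $\overline{x},\overline{w_0}$ are regular sequences on $B$, i.e. to the dimension equalities $\dim B/(\overline{x},\overline{z})=\dim B/(\overline{x},\overline{w_0})=3$. Translating back to $S$, this amounts to $\hgt_S(\mathcal{J}+(x,z))\geq n$ and $\hgt_S(\mathcal{J}+(x,w_0))\geq n$, where $\mathcal{J}=(l_1,\dots,l_{n-2}):(x,y,zw_0)$.

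The main obstacle is precisely these two height estimates. Reducing the generators $l_1,\dots,l_{n-2}$ modulo $(x,z)$ (respectively $(x,w_0)$), and using that each $l_i$ is linear in $x,y$ with $l_{n-2}=zw_0+xg_1+yg_2$, one finds that $(l_1,\dots,l_{n-2})$ alone collapses modulo $(x,z)$ to $y$ times the second row of the matrix $A$, whose vanishing locus is too large to force the required height; the estimate therefore genuinely needs the extra determinantal generators of the residual intersection $\mathcal{J}$ (the maximal minors of the Jacobian dual $B(\varphi'')$, in the spirit of \Cref{colon ideal in second case}) to cut the dimension down. Carrying out this count, using the explicit form of $\mathcal{J}$ together with the $\Gs{2}$ hypothesis controlling the heights of the Fitting ideals of $\varphi$, yields $\hgt(\overline{x},\overline{z})\geq 2$ and $\hgt(\overline{x},\overline{w_0})\geq 2$ and completes the proof. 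An entirely parallel argument treats $\overline{y}$ in place of $\overline{x}$.
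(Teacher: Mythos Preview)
Your overall strategy---show that every associated prime of $\overline{\mathcal{K}}$ has height one and then argue that $\overline{x}$ cannot lie in any such prime---is reasonable, but the proof is incomplete precisely at the point you flag as ``the main obstacle'': the height bounds $\hgt(\overline{x},\overline{z})\geq 2$ and $\hgt(\overline{x},\overline{w_0})\geq 2$ are asserted, not proved. Your sketch (``carrying out this count \ldots\ yields'') does not supply the argument; one would need to analyze $\mathcal{J}+(x,z)$ and $\mathcal{J}+(x,w_0)$ in $S$ using the determinantal generators of $\mathcal{J}$, and there is no guarantee that the \emph{particular} variable $x$ (as opposed to a generic linear form in $x,y$) avoids every height-one prime over $\overline{\mathcal{K}}$ in every instance of the setting. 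So as written there is a genuine gap.

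The paper's proof takes a shorter and structurally different route that sidesteps this obstacle entirely. It uses the linkage description $\overline{\mathcal{K}}=(\overline{zw_0}):\overline{(x,y,zw_0)}$ together with $\overline{(x,y,zw_0)}=(\overline{zw_0}):\overline{\mathcal{K}}$ from \Cref{colon ideal of K} to observe that the associated primes of $\overline{\mathcal{K}}$ are among those of $(\overline{zw_0})$ but are disjoint from those of $\overline{(x,y)}$; in particular no associated prime of $\overline{\mathcal{K}}$ contains the ideal $\overline{(x,y)}$. Prime avoidance then gives an element of $\overline{(x,y)}$ lying in none of these finitely many primes, and after a linear change of the coordinates $x,y$ one may take that element to be $\overline{x}$. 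Thus the lemma is really proved ``up to a coordinate change,'' which suffices for all subsequent applications. Compared to your approach, this avoids any height computation for $(\overline{x},\overline{z})$ or $(\overline{x},\overline{w_0})$ and makes clear that the conclusion need not hold for an arbitrary initial choice of $x$.
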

\begin{proof}
 Notice that $\overline{\mathcal{K}}$ is Cohen-Macaulay and all associated primes of $\overline{\mathcal{K}}$ are the associated primes of $(\overline{zw_0})$ but not in the associated prime of $\overline{(x,y)}$. Thus, after possibly changing coordinates, we can choose either $\overline{x}$ or $\overline{y}$ which is not in any associated primes of $\overline{\mathcal{K}}$ either (by prime avoidance). So take $\overline{x}$, then the result follows.
\end{proof}

From Kronecker theorem (\Cref{matrix pencils}), $\varphi'$ is strictly equivalent to the block diagonal matrix \begin{align*}[O~L_{p_1}~\cdots~L_{p_m}~L_{s_1}'~\cdots~L_{s_t}'~M_{r_1}~\cdots~M_{r_k}].
\end{align*}

Now, we state some of the results from \cite{Lan14} on $\varphi'$. These results give us more restrictions on $\varphi'$.

\begin{lemma}\cite[Lemma 3.3]{Lan17}\label{O block not appear}
    The zero block $O$ and the $L_m$ block do not appear in the above decomposition of $\varphi'$.
\end{lemma}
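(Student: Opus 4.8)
The statement to prove is \Cref{O block not appear}, namely that in the Kronecker decomposition of the linear matrix $\varphi'$ the zero block $O$ and the $L_m$ (fat) blocks do not appear. The plan is to argue by contradiction, using the fact that $I$ satisfies $\Gs{2}$, equivalently (by \Cref{Gs and Fitting ideals}) that $\hgt \fitt_i(I) \geq i+1$ for $i \leq 1$, and in particular $\hgt I_{n-1}(\varphi) \geq 2$. The key translation is that the appearance of one of these forbidden blocks forces the ideal of maximal minors of $\varphi$ to drop height, contradicting either $I_1(\varphi) = \m$ (so $I$ has the correct number $n$ of generators of the expected degree) or the height-two perfection of $I$.

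First I would recall what each forbidden block does to the minors. A zero block $O$ contributes one or more identically-zero rows or columns to $\varphi'$; since $\varphi'$ is the $(n-1)\times(n-2)$ submatrix sitting inside $\varphi$, such a block means that some generator $f_i$ of $I$ is expressible without using the full set of relations, or equivalently that $\mu(I_\p)$ exceeds the allowed bound at a prime of small height — I would show this violates $\Gs{2}$ or forces $I_1(\varphi)\subsetneq\m$. For the $L_m$ block (the $m\times(m+1)$ fat block), the crucial observation is that a fat block has one more column than row, so it is a linear presentation of the form whose ideal of maximal minors is $(x,y)^{m}$-primary of the wrong height; stacked against the rest of $\varphi$ it causes $I_{n-1}(\varphi)$ to acquire a component of height exceeding what is compatible with $I$ being a height-two perfect ideal generated as stated. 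Concretely, I would compute that the presence of an $L_m$ block would make $\hgt I_{n-2}(\varphi) \geq 3$, contradicting the equality $\hgt I_{n-2}(\varphi) = 2$ established in the proof of \Cref{minimal prime is unique}.

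The cleanest route is to cite \cite[Lemma 3.3]{Lan17} directly, since the hypotheses of \Cref{Main Setting} (height two perfect, $\Gs{2}$, $I_1(\varphi)=\m$, linear $\varphi'$ over $\k[x,y]$) are exactly those under which that lemma is proved; the content here is that our $\varphi'$ satisfies the hypotheses of Lan's lemma, which I would verify by noting that $\varphi'$ is the linear part over $\k[x,y]$ obtained after the normalization in \Cref{presentation matrix} and \Cref{case i and ii presentation matrix}. Thus the proof reduces to checking that the structural constraints transfer verbatim: $\varphi'$ is $(n-1)\times(n-2)$ and linear in $x,y$, and the ambient ideal retains $\Gs{2}$, so Lan's analysis of strict equivalence classes applies unchanged.

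The main obstacle I anticipate is the bookkeeping in relating the minors of the block $\varphi'$ back to the Fitting ideals of the full matrix $\varphi$: the forbidden blocks live in the linear submatrix, but the height conditions are stated for $\varphi$, whose last column carries the quadratic entries. I would handle this by reducing modulo the prime $(x,y)$ (or $(x,y,zw_0)$ in the relevant ring), where the quadratic column degenerates in a controlled way by the rank-one hypothesis of \Cref{Main Setting}, so that the height drop caused by an $O$ or $L_m$ block in $\varphi'$ genuinely propagates to a height drop in $I_{n-2}(\varphi)$ and hence a failure of $\Gs{2}$. Once that propagation is justified, the contradiction is immediate.
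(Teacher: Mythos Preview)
The paper gives no proof of this lemma; it is simply imported from \cite[Lemma~3.3]{Lan17}. Your suggestion to cite Lan's result directly, after verifying that $\varphi'$ meets his hypotheses (an $(n-1)\times(n-2)$ linear matrix over $\k[x,y]$ arising from the presentation of a height-two perfect ideal with $I_1(\varphi)=\m$), is precisely what the paper does, so on that point you match the paper exactly.

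Your self-contained sketch, however, misidentifies the mechanism for the $L_m$ block. The claim that an $L_m$ block forces $\hgt I_{n-2}(\varphi)\geq 3$ is not the right obstruction, and the proposed ``height drop in $I_{n-2}(\varphi)$'' via reduction modulo $(x,y)$ does not go through as stated. The clean argument is this: an $L_m$ block, being $m\times(m+1)$, carries a nonzero column syzygy, which extends by zeros to $v\in R^{n-2}$ with $\varphi' v=0$; then $\varphi\begin{pmatrix}v\\0\end{pmatrix}=(w,0,\ldots,0)^t$ for some $w\in R$, and applying the row vector $[f_1\;\cdots\;f_n]$ gives $f_1 w=0$, hence $w=0$ in the domain $R$. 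Thus $\begin{pmatrix}v\\0\end{pmatrix}\in\ker\varphi$, contradicting the injectivity of $\varphi$ coming from the Hilbert--Burch resolution. For the $O$ block, a zero column of $\varphi'$ similarly yields a column $(\ell,0,\ldots,0)^t$ of $\varphi$, forcing $\ell f_1=0$, so $\ell=0$, contradicting minimality of the presentation. In short, the contradiction flows from height-two perfection and $\mu(I)=n$, not from the Fitting-height inequalities you invoke.
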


\begin{lemma}\cite[Lemma 3.4]{Lan17}\label{only one bolck of L'}
 There is one and only one block $L'_n$ in the decomposition of $\varphi'$.   
\end{lemma}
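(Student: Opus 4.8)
The plan is to prove the statement by a direct count of rows versus columns in the Kronecker decomposition, after using \Cref{O block not appear} to discard every block type except the transpose blocks $L'$ and the square blocks $M$. First I would note that strict equivalence is realized as $\varphi'=C(xA+yB)D$ with $C$ and $D$ \emph{square} invertible matrices, so it preserves both the number of rows and the number of columns. Hence the canonical block form has the same shape as $\varphi'$, namely $(n-1)\times(n-2)$ by \Cref{presentation matrix}, and so it has exactly one more row than column.

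Next I would tabulate the row-minus-column excess of each surviving block. By \Cref{O block not appear} neither the zero block $O$ nor any $L_p$ block occurs, leaving
$$\varphi'\sim[L_{s_1}'~\cdots~L_{s_t}'~M_{r_1}~\cdots~M_{r_k}].$$
A transpose block $L_s'=L_s^t$ is of size $(s+1)\times s$ and contributes excess $+1$, while each square block $M_r$ is $r\times r$ and contributes $0$. Summing, the total excess of rows over columns equals the number $t$ of transpose blocks. Equating this with the excess $1$ coming from the shape $(n-1)\times(n-2)$ forces $t=1$, which is precisely the claim that exactly one block $L'$ appears.

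The arithmetic here is immediate, so the lemma carries no real obstacle of its own; the substantive input is \Cref{O block not appear}, which removes the $O$ and $L_p$ blocks. Those are exactly the blocks that could contribute a negative or arbitrary excess (an $L_p$ block is $p\times(p+1)$, contributing $-1$, and a zero block contributes an unconstrained amount), so without their elimination the count would not pin down $t$. The only point meriting explicit verification is that strict equivalence, being left and right multiplication by invertible square matrices, does not change the dimensions; once this is recorded the dimension count closes the argument.
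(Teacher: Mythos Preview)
Your argument is correct. The paper does not supply its own proof of this lemma; it simply cites \cite[Lemma~3.4]{Lan17}. Your dimension count is exactly the standard way to establish it: once \Cref{O block not appear} removes the $O$ and $L_p$ blocks, every remaining block contributes a nonnegative row-minus-column excess, each $L'_s$ contributes $+1$, each $M_r$ contributes $0$, and the total must equal the excess $1$ of the $(n-1)\times(n-2)$ matrix $\varphi'$. There is nothing to add; the only substantive input is indeed \Cref{O block not appear}, as you observe.
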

 Thus, \Cref{O block not appear}, \Cref{only one bolck of L'} implies that $\varphi'$ will be strictly equivalent to $[L_{n-2}']$ or $[L_{r}~M_{r_1}~\cdots~M_{r_k}]$. Now, depending on the presentation of $\varphi'$, we will explicitly characterize the $B$-ideal $\overline{\mathcal{K}}^{(2)}$.

\subsection{When $\varphi'$ is strictly equivalent to $[L'_{n-2}]$}
Define the $B$-ideals,

$$N_1=(\{\overline{-w_{n-2}\a_{i+1}+w_{n-1}\a_i} ; 1\leq i \leq {n-4}\})$$ and \begin{align*}
    N_2=\overline{\mathcal{K}}^2+(\{\overline{-zw_{n-2}\a_{i+1}+zw_{n-1}\a_i} ; 1\leq i \leq {n-4}\})\\+(\{{\overline{w_{n-1}(\a_{i+1}-\a_i)-w_{n-2}(\a_{i+2}-\a_{i+1})}} ; 1 \leq i \leq n-5\}).
\end{align*} Clearly, $N_i \neq \overline{\mathcal{K}}^2$ for $i=1,2$.

\begin{lemma}\label{x.N_1 is subset of K^2}
    With the assumption of \Cref{Main Setting}. In the ring $B$, we have $\overline{x} \cdot N_i \subseteq \overline{\mathcal{K}}^2$, for $i=1,2$.
\end{lemma}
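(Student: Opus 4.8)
The plan is to reduce the containment to the displayed generators and then to push the symmetric‑equation relations through each one. Since $\overline x\cdot\overline{\mathcal K}^2\subseteq\overline{\mathcal K}^2$ automatically, for $N_2$ it suffices to treat the two extra families of generators, and for $N_1$ the single family $\sigma_i:=w_{n-1}\a_i-w_{n-2}\a_{i+1}$; thus the whole lemma comes down to showing that $\overline x$ times each generator lies in $\overline{\mathcal K}^2$.

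The computational engine is the pair of identities
\[
x\,\overline{\a_i}=-\overline{\a_{i_x}}\,\overline{zw_0},\qquad y\,\overline{\a_i}=-\overline{\a_{i_y}}\,\overline{zw_0}\qquad\text{in }B,
\]
which I would establish first. They follow by expanding $\a_i=a_{1i}w_{n-1}-a_{2i}w_{n-2}$, using $\overline{l_{n-2}}=\overline{zw_0}+x\,a_{1,n-2}+y\,a_{2,n-2}=0$ to trade $x\,w_{n-2}$ for $-\overline{zw_0}-y\,w_{n-1}$, and then using $\overline{l_i}=x\,a_{1i}+y\,a_{2i}=0$ for $i\le n-3$; the coefficients that survive are exactly the linear forms $\a_{i_x},\a_{i_y}$ of \Cref{notation 5.1}. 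The upshot is that multiplying any $\a_i$ by $x$ or $y$ manufactures a factor $\overline{zw_0}\in\overline{\mathcal K}$, so the task becomes to identify the surviving cofactor as an element of $\overline{\mathcal K}$ (landing the product in $\overline{zw_0}\,\overline{\mathcal K}\subseteq\overline{\mathcal K}^2$) or of $\overline{zw_0}$ (which, after one further factor of $z$, lands it in $\overline{(zw_0)}^{2}\subseteq\overline{\mathcal K}^2$).

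For $N_1$, applying the first identity to $\sigma_i$ gives
\[
x\,\overline{\sigma_i}=\overline{zw_0}\bigl(w_{n-1}\,a_{2i}-w_{n-2}\,a_{2,i+1}\bigr).
\]
Here the shape of the single Kronecker block $L'_{n-2}$ is decisive: its columns overlap in one variable, so consecutive columns of $A$ satisfy $a_{2i}=a_{1,i+1}+(\text{a scalar multiple of }w_0)$, while a Laplace expansion gives $w_{n-1}\,a_{1,i+1}-w_{n-2}\,a_{2,i+1}=\a_{i+1}$. Substituting, the cofactor becomes $\overline{\a_{i+1}}$ plus a scalar multiple of $\overline{w_0\,w_{n-1}}$; in the case governing this subsection that correction term is absent (this is exactly the place where the dichotomy $w_0\in I_2(B(\varphi''))$ versus $w_0\notin I_2(B(\varphi''))$ enters), and one is left with $x\,\overline{\sigma_i}=\overline{zw_0\,\a_{i+1}}\in\overline{\mathcal K}^2$.

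The same substitution handles $N_2$. For the twisted generators $z\,\sigma_i$ one has $x\,\overline{z\sigma_i}=z\cdot x\,\overline{\sigma_i}$, and now any residual multiple of $\overline{zw_0^2 w_{n-1}}$ acquires a second factor of $z$, i.e. becomes a multiple of $\overline{(zw_0)}^{2}\subseteq\overline{\mathcal K}^2$, so these need no hypothesis on the correction term. For the second‑difference generators $w_{n-1}(\a_{i+1}-\a_i)-w_{n-2}(\a_{i+2}-\a_{i+1})$ the identity is applied termwise and the cofactor telescopes to $\overline{\a_{i+2}-\a_{i+1}}\in\overline{\mathcal K}$, the consecutive $w_0$‑corrections cancelling in the difference. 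The main obstacle throughout is precisely this cofactor bookkeeping—peeling off the guaranteed $\overline{zw_0}$ and proving the remainder lies in $\overline{\mathcal K}$ rather than merely in the span of $zw_0\,w_{n-1}$. The two facts that make it close up are the vanishing $\overline{c_{ij}}=0$ in $B$ (from $I_3(B(\varphi''))\subseteq\mathcal J$, which identifies $\overline{\mathcal K}$ with $(\overline{\a_1},\dots,\overline{\a_{n-3}},\overline{zw_0})$) and the rigidity of the single $L'_{n-2}$ block, which forces the column‑overlap corrections to be uniform. I would therefore prove the two $x\a_i,y\a_i$ identities and the consecutive‑column relation as short preliminary computations, after which each generator is a one‑line substitution.
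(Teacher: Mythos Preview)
Your approach is essentially the paper's: both pivot on the identity $\overline x\,\overline{\a_i}=\overline{zw_0}\cdot\overline{a_{2i}}$ in $B$ (equivalently $-\overline{\a_{i_x}}\,\overline{zw_0}$). The paper obtains it by citing \cite[Lemma~4.3]{BoswellMukundan16}; you derive it directly from $\overline{l_i}=\overline{l_{n-2}}=0$. The paper then writes $\overline x\cdot\sigma_i=\overline{zw_0}\bigl(\overline{w_{n-2}\a_{(i+1)_x}-w_{n-1}\a_{i_x}}\bigr)$ and simply asserts membership in $\overline{\mathcal K}^2$, whereas your use of the $L'_{n-2}$ block to identify the cofactor with $\overline{\a_{i+1}}$ (plus a $w_0$-correction) makes explicit what the paper leaves implicit. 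So for $N_1$ and for the $z\sigma_i$ part of $N_2$ your sketch is at least as complete as the paper's.

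There is one genuine gap. For the second-difference generators $\tau_i=\sigma_{i+1}-\sigma_i$ in $N_2$ you claim that ``the consecutive $w_0$-corrections cancel in the difference''. They need not. Writing $c_i:=b_i-a_{i+1}$ for the correction coefficient, one has
\[
x\,\overline{\tau_i}=\overline{zw_0}\,(\overline{\a_{i+2}-\a_{i+1}})+(c_{i+1}-c_i)\,\overline{zw_0^2\,w_{n-1}},
\]
and under the paper's own normalisation for the case $w_0\in I_2(B(\varphi''))$ (namely $b_1=1$, $a_2=0$, $a_i=b_{i-1}=0$ for $i\ge 3$) one gets $c_1=1$ and $c_i=0$ for $i\ge 2$, so $x\,\overline{\tau_1}$ carries a residual term $-\overline{zw_0^2 w_{n-1}}$. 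This is \emph{not} obviously in $\overline{\mathcal K}^2$: the monomial $w_0w_{n-1}$ does not occur in any of the degree-two generators $\a_j,c_{jk}$, so $\overline{w_0w_{n-1}}\notin\overline{\mathcal K}$. The paper itself dispatches $N_2$ with a bare ``Similarly'', so it does not spell out this step either; but your sketch, as written, does not close the loop for $\tau_1$ and would need a separate argument here.
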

\begin{proof}
     For $1 \leq j \leq n-3$, consider the $S$-ideal, $(l_j,l_{n-2})$. Then $[l_j,l_{n-2}]=[x~y~zw_0] \cdot B_j$, where $B_j$ is $2 \times 3$ submatrix of $B(\varphi'')$, such that first column of $B_j$ is $j$-th column and second column be the last column of $B(\varphi'')$. Now,  \cite[Lemma 4.3]{BoswellMukundan16}, implies $x\a_i=zw_0 \a_{i_x}$. Similarly, $x \a_{i+1}=zw_0 \a_{{i+1}_x}$. Therefore,
     \begin{align*}
    \overline{x} \cdot (-\overline{w_{n-2}\a_{i+1}}+\overline{w_{n-1} \a_i})&=-\overline{w_{n-2}}\{\overline{x \a_{i+1}}\}+ \overline{w_{n-1}}\{\overline{x \a_i}\} \\&=\overline{w_{n-2}}\{\overline{zw_0 \a_{{i+1}_x}}\}- \overline{w_{n-1}}\{zw_0 \a_{i_{x}}\}
    \\&= \overline{w_{n-2}}\{\overline{zw_0 \a_{{i+1}_x}}\} - \overline{w_{n-1}}\{\overline{zw_0 \a_{{i_x}}}\} \\&=\overline{zw_0}\{\overline{w_{n-2} \a_{{i+1}_x}}-\overline{w_{n-1} \a_{i_x}}\} \in \overline{\mathcal{K}}^2.
\end{align*}
Thus, $\overline{x}\cdot N_1 \subseteq \overline{\mathcal{K}}^2$. That is, ${N_1} \subseteq \overline{\mathcal{K}}^{(2)}$. Similarly, $\overline{x} \cdot N_2 \subseteq \overline{\mathcal{K}}^2$ and hence $N_2 \subseteq \overline{\mathcal{K}}^{(2)}.$
\end{proof}



\begin{theorem}\label{w_0 is regular over ideal}
    Assume the assumption of \Cref{Main Setting}. Let $\varphi'$ is strictly equivalent to $[L'_{n-2}]$.
    \begin{enumerate}
        \item If $w_0 \notin I_2(B(\varphi''))$, then we have $\overline{\mathcal{K}}^{(2)}=(\overline{z^2w_0^2})+{N_1}$.
        \item If $w_0 \in I_2(B(\varphi''))$, then we have $\overline{\mathcal{K}}^{(2)}=\overline{\mathcal{K}}^2+ N_2$.
    \end{enumerate}
    \end{theorem}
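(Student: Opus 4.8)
The plan is to compute the symbolic power $\overline{\mathcal{K}}^{(2)}$ via the characterization $\overline{\mathcal{K}}^{(2)} = \overline{\mathcal{K}}^2 : (\overline{x})^\infty$, which is available because $\overline{x}$ is a nonzerodivisor on $\overline{\mathcal{K}}$ lying in every embedded associated prime of $\overline{\mathcal{K}}^2$ (this is exactly the setup recorded before the theorem, using \Cref{xi is nzd on K}). So the strategy is to produce a candidate ideal $\mathcal{C}$ (namely $(\overline{z^2w_0^2}) + N_1$ in case (1) and $\overline{\mathcal{K}}^2 + N_2$ in case (2)), show $\mathcal{C} \subseteq \overline{\mathcal{K}}^{(2)}$, and then show the reverse containment by a colon/degree argument. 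The containment $\mathcal{C} \subseteq \overline{\mathcal{K}}^{(2)}$ is already handled: \Cref{x.N_1 is subset of K^2} gives $\overline{x} \cdot N_i \subseteq \overline{\mathcal{K}}^2$, hence $N_i \subseteq \overline{\mathcal{K}}^{(2)}$, and $\overline{z^2w_0^2} = \overline{(zw_0)}^2 \in \overline{\mathcal{K}}^2 \subseteq \overline{\mathcal{K}}^{(2)}$.

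The real work is the reverse inclusion $\overline{\mathcal{K}}^{(2)} \subseteq \mathcal{C}$, and here I would split along the two cases according to whether $w_0 \in I_2(B(\varphi''))$. Since $\varphi'$ is strictly equivalent to $[L'_{n-2}]$, the Jacobian dual $B(\varphi'')$ has a very rigid, explicit banded shape coming from the Kronecker form of $L'_{n-2}$, so the generators $\overline{\a_i}$, $\overline{c_{ij}}$ and the mixed terms $\overline{\a_{i_x}}, \overline{\a_{i_y}}$ can be written out concretely. I would set up a term order (a Gröbner basis on $S = \k[\underline{x},\underline{w}]$ as flagged in the paper's introduction) adapted to this banded structure, and compute $\overline{\mathcal{K}}^2$ together with its saturation at $\overline{x}$ directly. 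Concretely, one takes an arbitrary homogeneous $\overline{f}$ with $\overline{x}\,\overline{f} \in \overline{\mathcal{K}}^2$, expands $\overline{x}\,\overline{f}$ in the generators of $\overline{\mathcal{K}}^2$, and uses the relations $x\a_i = zw_0\,\a_{i_x}$ (from \cite[Lemma 4.3]{BoswellMukundan16}) to cancel the factor $\overline{x}$ and identify the residue modulo $\overline{\mathcal{K}}^2$ as a combination of the $N_i$ generators. The dichotomy on $w_0$ governs which generators of $\overline{\mathcal{K}}$ are available after inverting/saturating: when $w_0 \notin I_2(B(\varphi''))$, the minors $\overline{\a_i}$ become negligible modulo the saturation in a way that collapses $\overline{\mathcal{K}}^2$ down to $(\overline{z^2w_0^2})$, leaving only the $N_1$ part; when $w_0 \in I_2(B(\varphi''))$ one cannot discard $\overline{\mathcal{K}}^2$ and must keep the full $\overline{\mathcal{K}}^2 + N_2$, with the extra generators in $N_2$ (the $z$-multiplied differences and the second-order differences of the $\overline{\a_i}$) accounting precisely for the new symbolic elements.

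To finish, I would verify that the candidate $\mathcal{C}$ is already saturated with respect to $\overline{x}$, i.e. $\mathcal{C} : (\overline{x}) = \mathcal{C}$; combined with $\mathcal{C} \subseteq \overline{\mathcal{K}}^{(2)} = \overline{\mathcal{K}}^2 : (\overline{x})^\infty$ and $\overline{\mathcal{K}}^2 \subseteq \mathcal{C}$, this forces $\mathcal{C} = \overline{\mathcal{K}}^{(2)}$. The self-saturation of $\mathcal{C}$ is where the explicit Gröbner basis of $\mathcal{C}$ pays off: one reads off that no leading term of a syzygy forcing a new element is divisible by $\overline{x}$. I expect the main obstacle to be the bookkeeping in case (2): organizing the generators of $\overline{\mathcal{K}}^2$ and $N_2$ into a genuine Gröbner basis and checking that all S-polynomials reduce to zero, since the mixed generators $\overline{c_{ij}}$ and the difference relations interact nontrivially. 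The case (1) analysis should be comparatively clean because the absence of $w_0$ from $I_2(B(\varphi''))$ makes the $\overline{\a_i}$ essentially redundant after saturation, reducing the problem to understanding $N_1$ alone.
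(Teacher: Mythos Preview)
Your approach is essentially the paper's: both reduce everything to a Gr\"obner basis computation in $S$ verifying that $\overline{x}$ is a nonzerodivisor on the candidate ideal $\mathcal{C}$, after which the equality $\mathcal{C}=\overline{\mathcal{K}}^{(2)}$ follows formally. The paper carries out this Gr\"obner computation explicitly (reverse lex with $x<y<z<w_0<\cdots<w_{n-1}$, two stages of Buchberger, checking that no initial term is divisible by $x$), which is exactly what you outline.

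The only genuine difference is in how the final equality is extracted from ``$\overline{x}$ regular on $\mathcal{C}$.'' The paper does \emph{not} go through the sandwich $\overline{\mathcal{K}}^2\subseteq\mathcal{C}$ and the saturation formula $\overline{\mathcal{K}}^{(2)}=\overline{\mathcal{K}}^2:(\overline{x})^\infty$; instead it localizes at each $P\in\Ass(\mathcal{C})$ and uses \Cref{colon ideal of K} to get $\overline{\mathcal{K}}^{(2)}_P=(\overline{z^2w_0^2})_P:\overline{(x,y,zw_0)}^{(2)}_P=(\overline{z^2w_0^2})_P\subseteq\mathcal{C}_P$, needing only $(\overline{z^2w_0^2})\subseteq\mathcal{C}$. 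Your route, by contrast, requires in case~(1) the containment $\overline{\mathcal{K}}^2\subseteq(\overline{z^2w_0^2})+N_1$, which you assert (``the minors $\overline{\a_i}$ become negligible\ldots collapses $\overline{\mathcal{K}}^2$ down to $(\overline{z^2w_0^2})$'') but do not prove. This containment is true and can be read off once the Gr\"obner basis $G_1$ is in hand (reduce each $\overline{zw_0\,\a_i}$ and $\overline{\a_i\a_j}$ modulo $G_1$), but it is a nontrivial extra check that the paper's localization argument avoids entirely. So your plan is correct, but be aware that in case~(1) you have inserted an additional step that the paper's packaging does not need.
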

\begin{proof}
Let the first $n-3$ entries in the first row of the presentation matrix $\varphi$ be $a_1x+b_1y, a_2x+b_2y,\cdots, a_{n-3}x+b_{n-3}y$ where $a_i,b_i \in \k$. Since $\varphi'$ is strictly equivalent to $[L_{n-2}']$. Then Jacobian Dual of $\varphi''$ is 
\begin{align*}
    B(\varphi'')=\begin{pmatrix}
        a_1w_0+w_1&a_2w_0+w_2&\cdots&a_{n-3}w_0+w_{n-3}&w_{n-2}\\
        b_1w_0+w_2&b_2w_0+w_3&\cdots&b_{n-3}w_0+w_{n-2}&w_{n-1}\\
        0&0&\cdots&0&1\\
    \end{pmatrix}
\end{align*}
For $2 \leq i \leq n-3$, $a_iw_0+w_i,b_{i-1}w_0+w_i \in I_2(B(\varphi''))$. Thus, $(a_{i}-b_{i-1})w_0 \in I_2(B(\varphi''))$. But $w_0 \notin I_2(B(\varphi''))$, this will imply $a_i=b_{i-1}$ for $2 \leq i \leq n-3$. Without loss of generality, take $a_i=b_{i-1}=0$ for $2\leq i \leq n-3$ and $a_1=1,b_{n-3}=1$. Thus, the symmetric equations $l_i=xw_i+yw_{i+1}$ for $2 \leq i \leq n-4$, $l_1=x(w_0+w_1)+yw_2, l_{n-3}=xw_{n-3}+y(w_0+w_{n-2})$ and $l_{n-2}=xw_{n-2}+yw_{n-1}+zw_0$.

We have $\overline{x}$ is regular over $\overline{\mathcal{K}}$ and $(\overline{z^2w_0^2})+N_1 \subseteq \overline{\mathcal{K}}^{(2)}$. If $\overline{x}$ is regular over $(\overline{z^2w_0^2})+N_1$ then for all $P \in \Ass{((\overline{z^2w_0^2})+N_1)}$, $\overline{x} \notin P$. Thus, for all $P \in \Ass{((\overline{z^2w_0^2})+N_1)}$,
\begin{align*}
    \{(\overline{z^2w_0^2})+{N_1} \}_P \subseteq (\overline{z^2w_0^2})_P :\overline{(x,y,zw_0)}^{(2)}_P=(\overline{z^2w_0^2})_P:B_P=(\overline{z^2w_0^2})_P.
\end{align*}
Thus, it suffices to show that $\overline{x}$ is regular over $(\overline{z^2w_0^2})+N_1$. Equivalently, to show in the ring $S$,$x$ is regular over $\mathcal{J}+(z^2w_0^2)+N_1$. We will use the Grobner basis technique to prove this.

Let $<$ be the reverse lexicographical monomial order on the ring $S=\k[\underline{x},\underline{w}]$ and 
\begin{align*}
    x<y<z<w_0<\cdots<w_{n-1}.
\end{align*}
Suppose $G=\{h_1,h_2,\dots,h_m\}$ is the Grobner basis of $\mathcal{J}+(z^2w_0^2)+N_1$ then $\mathcal{J}+(z^2w_0^2)+N_1=(h_1,h_2,\dots,h_m)$. Now $x$ will be regular over $\mathcal{J}+(z^2w_0^2)+N_1$ if and only if $\{ \mathcal{J}+(z^2w_0^2)+N_1 \}:(x)=\mathcal{J}+(z^2w_0^2)+N_1$ and to compute the Grobner basis of $\mathcal{J}+\{(z^2w_0^2)+N_1 \} : (x)$ we need to compute the Grobner basis of $\{ \mathcal{J}+(z^2w_0^2)+N_1 \} \bigcap (x)$.

Consider the ring $S[t]$, where $t$ is indeterminate such that $x<y<z<w_0<\cdots<w_{n-1}<t$. Then, we have to compute the Grobner basis of $t{\mathcal{J}}+(tz^2w_0^2)+tN_1+(1-t)x$. For polynomials $f,g$,  $S(f,g)$ be the $S$-pair of polynomials, then we have to compute following $S$-pair of polynomials, $S(th_i,th_j)$ and $S(th_i,(1-t)x)$, $1\leq i <j \leq m$. Observe that, $S(th_i,th_j)=tS(h_i,h_j)=0$ for $1 \leq i < j \leq m$, as $\{h_1,\dots,h_m \}$ is Grobner basis. 

If $x$ does not divide $\inn_{<}h_i$, for all $i$, then $S(th_i,(1-t)x)=xh_i$. Thus the Grobner basis of $\{ \mathcal{J}+(z^2w_0^2)+N_1 \} \bigcap (x)$ will be $\{xh_1,xh_1,\dots,xh_m \}$ and hence, the Grobner basis of $\{ \mathcal{J}+(z^2w_0^2)+N_1 \} :(x)$ will be $G=\{h_1,h_2,\dots,h_m \}$. So, we need to prove that for all $i$, $x$ does not divide $in_{<}h_i$. That is, we have to compute the Grobner basis of $\mathcal{J}+(z^2w_0^2)+N_1$.

Let $G_0=\{l_1,l_2,\dots,l_{n-2} \} \bigcup \{c_{jk} ; 1 \leq j <k \leq n-3 \} \bigcup \{z^2w_0^2, \eta_{i} ; 1 \leq i \leq n-4 \}$ where 
\begin{align*}
    c_{jk}=w_jw_{k+1}-w_{j+1}w_k ; j \neq 1,n-3, k \neq n-3 && in_{<}c_{jk}=w_{j+1}w_{k} \\c_{1k}=w_0w_{k+1}+w_1w_{k+1}-w_2w_k && in_{<}c_{1k}=w_2w_k \\c_{j,n-3}=w_0w_j+w_jw_{n-2}-w_{j+1}w_{n-3} && in_{<}c_{j,n-3}=w_{j+1}w_{n-3} \\ c_{1,n-3}=w_0^2+w_0w_{n-2}+w_0w_1+w_1w_{n-3}-w_2w_{n-3}&& in_{<}c_{1,n-3}=w_2w_{n-3}.
\end{align*}
 And, 
 \begin{align*}
 \eta_i= w_iw_{n-1}^2-2w_{i+1}w_{n-2}w_{n-1}+w_{i+2}w_{n-2}^2 ; 2 \leq i \leq n-5 && in_{<} \eta_i=w_{i+1}w_{n-2}^2 \\ \eta_1=w_0w_{n-1}^2+w_1w_{n-1}^2-2w_2w_{n-2}w_{n-1}+w_3w_{n-2}^2 && in_{1}=w_3w_{n-2}^2
 \\ \eta_{n-4}=w_{n-4}w_{n-1}^2-2w_{n-3}w_{n-2}w_{n-1}+w_0w_{n-2}^2+w_{n-2}^3 && in_{<} \eta_{n-4}=w_{n-2}^3.
\end{align*}
Now, $\mathcal{J}$ is prime ideal, so in stage one iteration of $S$-polynomials, we have to compute following $S$-pair
of polynomials
\begin{align*}
    S(l_{i'},z^2w_0^2) && S(l_{i'},\eta_i) \\ S(c_{jk},z^2w_0^2) &&
    S(c_{jk}, \eta_i) && S(z^2w_0^2,\eta_i).
\end{align*}
For $1 \leq i' \leq n-3$, $S(l_{i'},z^2w_0^2)$ reduces to zero. We have  $in_{<}l_{i'}=yw_{i'+1}$ for $1 \leq i' \leq n-3$ so $2 \leq i'+1 \leq n-2$. Thus, $\gcd(in_{<}l_{i'},z^2w_0^2)=1$. Hence, $S(l_{i'},z^2w_0^2)$ reduces to zero with respect to $l_{i'},z^2w_0^2$ for $1 \leq i' \leq n-3$. Also, $\lcm(in_{<}l_{n-2},z^2w_0^2)=z^2w_0^2$. Thus, $S(l_{n-2},z^2w_0^2)=zxw_0w_{n-2}+yzw_0w_{n-1}$. Applying, division algorithm with respect to $l_{n-2}$, we get $S(l_{n-2},z^2w_0^2)=-x^2w_{n-2}^2-2xyw_{n-2}w_{n-1}-y^2w_{n-1}^2$. Further, reducing with respect to $l_{n-3}$, we get 
\begin{align*}
    S(l_{n-2},z^2w_0^2)=x^2w_{n-2}^2-2x^2w_{n-3}w_{n-1}-2xyw_0w_{n-1}+y^2w_{n-1}^2.
\end{align*}
We have $\mathcal{J}$ a prime ideal and $c_{jk} \in \mathcal{J}$. So, $c_{jk}; 1 \leq j < k \leq n-3$, is irreducible polynomials in $\k[\underline{w}]$, this implies  $\gcd(in_{<}c_{jk},z^2w_0^2)=1$ and thus $S(c_{jk},z^2w_0^2)$ reduces to zero with respect to $c_{jk}, z^2w_0^2$ for $1 \leq j < k \leq n-3$. Also, $w_0$ regular over $I_2(B(\varphi''))$ and $\eta_i \in \k[\underline{w}]$ implies $\gcd(in_{<} \eta_i, z^2w_0^2)=1$. Hence $S(z^2w_0^2, \eta_i)$ reduces to zero with respect to $z^2w_0^2, \eta_i$ for $1 \leq i \leq n-4$.

Now, we investigate $S(l_{i'}, \eta_i); 1 \leq i' \leq n-2, 1 \leq i \leq n-4$. Clearly, for $1 \leq i \leq n-4$, $\gcd(in_{<}l_{n-2}, in_{<} \eta_i)=\gcd(zw_0,w_{i+2}w_{n-2}^2)=1$. Thus, $S(l_{n-2},\eta_i)$ reduces to zero with respect to $l_{n-2}, \eta_i$ for $1 \leq i \leq n-4$. Also, let $2 \leq i \leq n-4$ then $i'=i$ or $i'=i+1$ or $i' \neq i,i+1$. If $i'=i$, then $in_{<} l_i=yw_{n+1}$ and $in_{<} \eta_i=w_{i+2}w_{n-2}^2$. Thus $\gcd(in_{<}l_i, \eta_i)=1$ and hence $S(l_i,\eta_i)$ reduces to zero with respect to $l_i, \eta_i$. If $i'=i+1$ then $\lcm(in_{<} l_{i+1}, in_{<} \eta_i)=\lcm(yw_{i+2}, w_{i+2}w_{n-2}^2)=yw_{i+2}w_{n-2}^2$. Therefore, $S(l_{i+1},\eta_i)=xw_{i+1}w_{n-2}^2-yw_{i}w_{n-1}^2+2yw_{i+1}w_{n-2}w_{n-1}$. Applying the division algorithm with respect to $l_{i-1}, l_{i}$, we get $S(l_{i+1}, \eta_i)=xw_{i+1}w_{n-2}^2+xw_{i-1}w_{n-1}^2-2xw_iw_{n-2}w_{n-1}=x \eta_{i-1}$. Thus, $S(l_{i+1}, \eta_{i})$ reduces to zero. If $i' \neq i, i+1$ then either $i' < i$ or $i'> i+1$, suppose $i' \leq i-1$ then $\gcd(in_{<}l_{i'}, in_{<} \eta_i)= \gcd(yw_{i'+1},w_{i+2}w_{n-2}^2)=1$, as $i'+1 \leq i$ and this implies $S(l_{i'},\eta_{i})$ reduces to zero for $i' \leq i-1$. Similarly, $S(l_{i'}, \eta_i)$ reduces to zero for $i \geq i+1$. Thus, the only case remaining to investigate is $S(l_{i'}, \eta_1)$. Although, for $ 1\leq i' \leq n-2$ but $i' \neq 2$, $S(l_{i'}, \eta_1)$ reduces to zero because $\gcd(in_{<}l_{i'}, \eta_{i})=1$. For $i'=2$, $\gcd(in_{<}l_2, \eta_1) \neq 1$ and hence
\begin{align*}
    S(l_{2}, \eta_1)=xw_2w_{n-2}^2-yw_0w_{n-1}^2-yw_1w_{n-1}^2+2yw_2w_{n-2}w_{n-3}.
\end{align*}
Thus, applying division algorithm with respect to $l_1$, the reduced polynomial is 
\begin{align*}
    S(l_2,\eta_1)=xw_2w_{n-2}^2-2xw_0w_{n-2}w_{n-1}-2xw_1w_{n-2}w_{n-1}-yw_0w_{n-1}^2-yw_1w_{n-1}^2.
\end{align*}
Finally, we have to compute $S(c_{jk}, \eta_i)$. From definition of $\eta_i=w_{n-1} \a_i-w_{n-2} \a_{i+1} ; 1 \leq i \leq n-4$, we have following possibilities $j=i, k=i+1$ (vice-versa) or $j=i,k\neq i+1$ (vice-versa) or $j\neq i, k=i+1$ (vice-versa) or $j \neq i, k \neq i+1$ (vice-versa). If $j=i, k=i+1$, then $in_{<}c_{i,i+1}=w_{i+1}^2$ and $in_{<} \eta_i=w_{i+2}w_{n-2}^2$. Thus $\gcd(in_{<}c_{i,i+1},in_{<} \eta_i)=1$ and this implies $S(c_{i,i+1}, \eta_i)$ reduces to zero with respect to $c_{i,i+1}, \eta_i$. If $j=i,k\neq i+1$, then $k \geq i+2$. This implies $i+2 \leq n-3$, as $k \leq n-3$. So, $in_{<}c_{ik}=w_{i+1}w_k$. If $k>i+2$, $\gcd(in_{<}c_{ik},\eta_i)=1$. And, thus $S(c_{ik},\eta_i)$ reduces to zero. If $k=i+2$, then $\lcm(in_{<}c_{i,i+2}, in_{<} \eta_i)=w_{i+1}w_{i+2}w_{n-2}^2$, $S(c_{i,i+2}, \eta_i)=2w_{i+1}^2w_{n-2}w_{n-1}-w_iw_{i+3}w_{n-2}^2-w_iw_{i+1}w_{n-1}^2$. Applying division algorithm with respect to $\eta_{i+1}, c_{i,i+1}$, $S(c_{i,i+2}, \eta_i)$ reduces to zero. Similarly, if $j\neq i, k=i+1$, then $S(c_{j,i+1},\eta_i)$
reduces to zero. If $j \neq i,k \neq i+1$, then either $j\geq i+1, k\geq i+2$ or $j \leq i-1, k\leq i$. Let $j\geq i+2$ then $k \geq i+3$, which implies that $S(c_{j,k}, \eta_i)$ reduces to zero, as $\gcd(in_{<}c_{jk},in_{<} \eta_i)=1$. If $j=i+1$, then $k \geq i+2$, $\lcm(in_{<}c_{i+1,k},in_{<}\eta_i)=\lcm(w_{i+2}w_k,w_{i+2}w_{n-2}^2)=w_{i+2}w_kw_{n-2}^2$. Thus, $S(c_{i+1,k},\eta_i)=2w_kw_{i+1}w_{n-2}w_{n-1}-w_{i+1}w_{k+1}w_{n-2}^2-w_kw_iw_{n-1}^2=-\eta_{i-1}$, reduces to zero. Similarly, $S(c_{jk},\eta_i)$ reduces to zero for $j \leq i-1, k\leq i$. Hence, $S(c_{jk},\eta_i)$ reduces to zero for $1 \leq j< k \leq n-3, 1 \leq i \leq n-4$.

Therefore, in stage one iteration of Buchberger's algorithm, there are two non-zero $S$-polynomials. That is, 
\begin{align*}
    S(l_{n-2},z^2w_0^2)=x^2w_{n-2}^2-2x^2w_{n-3}w_{n-1}-2xyw_0w_{n-1}+y^2w_{n-1}^2,\\
    S(l_2,\eta_1)=xw_2w_{n-2}^2-2xw_0w_{n-2}w_{n-1}-2xw_1w_{n-2}w_{n-1}-yw_0w_{n-1}^2-yw_1w_{n-1}^2.
\end{align*}
Let $G_1=G_0 \bigcup \{h_{m+1},h_{m+2} \}$ where $h_{m+1}=S(l_{n-2},z^2w_0^2), h_{m+2}=S(l_2,n_1)$ and $in_{<}h_{m+1}=y^2w_{n-1}^2, in_{<}h_{m+2}=yw_1w_{n-1}^2$. Now in stage two iteration of Buchberger's algorithm, we have to compute the following $S$-polynomials
\begin{align*}
    S(l_i',h_{m+1}) && S(c_{jk},h_{m_+1}) && S(z^2w_0^2, h_{m+1})\\ S(\eta_i, h_{m+1})&&
    S(l_i',h_{m+2}) && S(c_{jk},h_{m_+2}) \\ S(z^2w_0^2, h_{m+2}) && S(\eta_i, h_{m+2})&&
    S(h_{m+1},h_{m+2}).
\end{align*}
We show that every $S$-polynomials in stage two will reduces to zero and thus $G_1$ will be required Grobner basis of $\mathcal{J}+(z^2w_0^2)+N_1$. Enough to show $S(l_{i'},h_{m+1}), S(c_{jk},h_{m+1}),\\ S(z^2w_0^2,h_{m+1}), S(\eta_i, h_{m+1}), S(h_{m+1},h_{m+2})$ reduces to zero. And using similar line of argument, one can show $S(l_{i'},h_{m+2}), S(c_{jk},h_{m+2}), S(z^2w_0^2,h_{m+2}), S(\eta_i, h_{m+2})$ also reduces to zero.

We have $in_{<}h_{m+1}=y^2w_{n-1}^2$ and we have already seen that $in_{<}c_{jk}=w_{j+1}w_{k}; 1 \leq j <k \leq n-3$, $in_{<}\eta_i=w_{i+2}w_{n-2}^2; 1 \leq i \leq n-4$. So, $\gcd(in_{<}c_{jk},in_{<}h_{m+1})=\gcd(in_{<}z^2w_0^2,in_{<}h_{m+1})=\gcd(in_{<}\eta_i, in_{<}h_{m+1})=1$. Thus, $S(c_{jk},h_{m+1}),S(z^2w_0^2,h_{m+1}),S(\eta_i,h_{m+1})$ reduces to zero, for $1 \leq i \leq n-4, 1 \leq j<k \leq n-3$.

For $2 \leq i' \leq n-4$, $\lcm(in_{<}l_{i'},  in_{<}h_{m+1})=\lcm(yw_{i'+1}, y^2w_{n-1}^2)=y^2w_{i'+1}w_{n-1}^2$. Therefore,
$S(l_{i'},h_{m+1})=2x^2w_{i'+1}w_{n-3}w_{n-1}-x^2w_{i'+1}w_{n-2}^2-xyw_{i'}w_{n-1}^2+2xyw_0w_{i'+1}w_{n-1}$. Applying the division algorithm with respect to $l_{i'-1}, l_{i'}$, we get $S(l_{i'},h_{m+1})=2x^2w_{i'+1}w_{n-3}w_{n-1}-x^2w_{i'+1}w_{n-2}^2-x^2w_{i'-1}w_{n-1}^2-2x^2w_0w_{i'}w_{n-1}$. Further, reducing it with respect to $c_{i',n-3}$, we get $S(l_{i'},h_{m+1})=2x^2w_{i'}w_{n-2}w_{n-1}-x^2w_{i'+1}w_{n-2}^2-x^2w_{i'-1}w_{n-1}^2=-x^2 \eta_{i'-1}$. Thus, $S(l_{i'},h_{m+1})$ reduces to zero. Similarly, $S(l_1,h_{m+1}), S(l_{n-3},h_{m+1})$ reduces to zero. Also, $\gcd(in_{<}l_{n-2},in_{<}h_{m+1})=\gcd(zw_0,y^2w_{n-1}^2)=1$, so $S(l_{n-2},h_{m+1})$ reduces to zero. Hence, $S(l_{i'},h_{m+1})$ reduces to zero, for $1 \leq i' \leq n-2$.

Now, we will show that $S(h_{m+1},h_{m+2})$ reduces to zero. Since $\lcm(in_{<}h_{m+1},in_{<}h_{m+2})=y^2w_1w_{n-1}^2$ therefore $S(h_{m+1},h_{m+2})=x^2w_1w_{n-2}^2-2x^2w_1w_{n-3}w_{n-1}-2xyw_0w_1w_{n-1}+xyw_2w_{n-2}^2-2xyw_0w_{n-2}w_{n-1}-2xyw_1w_{n-2}w_{n-1}-y^2w_0w_{n-1}$. Applying, division algorithm with respect to $l_1$, we get $S(h_{m+1},h_{m+2})=-2x^2w_1w_{n-3}w_{n-1}-2xyw_0w_1w_{n-1}-2xyw_0w_{n-2}w_{n-1}-2xyw_1w_{n-2}w_{n-1}\\-y^2w_0w_{n-1}-x^2w_0w_{n-2}^2$. Further, reducing it with respect to $l_{n-3}$, $S(h_{m+1},h_{m+2})=-x^2w_0w_{n-2}^2+2x^2w_0w_{n-3}w_{n-1}+2xyw_0^2w_{n-1}-y^2w_0w_{n-1}^2=-w_0h_{m+1}$. Thus, $S(h_{m+1},h_{m+2})$ reduces to zero.

Hence, $G_1=\{ l_1,l_2,\dots,l_{n-2},z^2w_0^2,h_{m+1},h_{m+2}\} \bigcup \{c_{jk}; 1 \leq j < k \leq n-3 \} \bigcup \{ 
\eta_i ; 1 \leq i \leq n-4 \}$ is the Grobner basis of $\mathcal{J}+(z^2w_0^2)+N_1$ and for any $h \in G_1$, $x$ does not divide $in_{<}h$.

$(2)$ Now we suppose $w_0 \in I_2(B(\varphi''))$. Let the first $n-3$ entries in the first row of the presentation matrix $\varphi$ be $a_1x+b_1y, a_2x+b_2y,\cdots, a_{n-3}x+b_{n-3}y$ where $a_i,b_i \in \k$. Since $\varphi'$ is strictly equivalent to $[L_{n-2}']$. Then the Jacobian dual of $\varphi''$ is 
\begin{align*}
   B(\varphi'')= \begin{pmatrix}
        a_1w_0+w_1&a_2w_0+w_2&\cdots&a_{n-3}w_0+w_{n-3}&w_{n-2}\\
        b_1w_0+w_2&b_2w_0+w_3&\cdots&b_{n-3}w_0+w_{n-2}&w_{n-1}\\
        0&0&\cdots&0&1\\
    \end{pmatrix}
\end{align*}
For $2 \leq i \leq n-3$, $a_iw_0+w_i,b_{i-1}w_0+w_i \in I_2(B(\varphi''))$. Thus, $(a_{i}-b_{i-1})w_0 \in I_2(B(\varphi''))$. Since $w_0 \in I_2(B(\varphi''))$, implies $a_i \neq b_{i-1}$ for at least one $i$, where $2 \leq i \leq n-3$. Without loss of generality, take $a_2=0, b_1=1$ and $a_i=b_{i-1}=0$ for $3 \leq i \leq n-3$.

Using the Grobner basis technique and tracing the above proof, we get $\overline{x}$ regular over $\overline{\mathcal{K}}^2+N_2$. Thus, $\overline{x} \notin P$ for all $P \in \Ass{(\overline{\mathcal{K}}^2+N_2)}$. So,
\begin{align*}
    (\overline{\mathcal{K}}^2+N_2)_P \subseteq (\overline{z^2w_0^2})_P: \overline{(x,y,zw_0)}^{(2)}_P=(\overline{z^2w_0^2})_P:B_P=(\overline{z^2w_0^2})_P.
\end{align*}
But, $(\overline{z^2w_0^2})_P \subseteq (\overline{\mathcal{K}}^2+N_2)_P$. Thus, $(\overline{\mathcal{K}}^2+N_2)_P=\overline{\mathcal{K}}^{(2)}_P$. Hence, $\overline{\mathcal{K}}^2+N_2=\overline{\mathcal{K}}^{(2)}$.
\end{proof}

Now, we discuss the case where $\varphi'$ is strictly equivalent to $[L'_{r}~M_{r_1}~\cdots~M_{r_k}]$. It suffices to consider only one sub-block of type $M_{r_i}$, the results will be the same if there is more than one sub-block of type $M_{r_i}$. That is, the description of $\overline{\mathcal{K}}^{(2)}$ in both cases, $\varphi'$ strictly equivalent to $[L'_r~M_{r_1}~\cdots~M_{r_k}]$ and $\varphi'$ strictly equivalent to $[L'_r~M_{r'}]$ are the same.

Since $M_{r'}$ is a square matrix of order $r'$, we can have $r' \geq 2$ or $r'=1$. Depending on $r'$, we will give the complete description of $\overline{\mathcal{K}}^{(2)}$.

\subsection{When $r' \geq 2$} Suppose that the entries in first row of the presentation matrix $\varphi$ are $a_1x+b_1y, a_2x+b_2y, \dots,a_rx+b_ry, a_{r+1}x+b_{r+1}y,\dots,a_{n-3}x+b_{n-3}y$, therefore 
\begin{align*}
    \varphi=\begin{pmatrix}
    a_1x+b_1y & a_2x+b_2y & \cdots & a_rx+b_ry & a_{r+1}x+b_{r+1}y & \cdots a_{n-3}x+b_{n-3}y & z & *'' \\
    & & & & & & &*'\\
& & \varphi' & & & & &\vdots\\
& & & & & & &*'\\
\end{pmatrix},
\end{align*}
where $\varphi$ is strictly equivalent to $[L'_r~M_{r'}]$.

\begin{lemma}\label{the case where w_0 is always zd}
    Assume the assumption of \Cref{Main Setting}. If $\varphi'$ is strictly equivalent to $[L'_r~M_{r'}]$ and $r' \geq 2$, then $w_0 \in I_2(B(\varphi''))$.
\end{lemma}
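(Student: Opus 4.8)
The plan is to display $w_0$ as an explicit member of $I_2(B(\varphi''))$, the one delicate step being to check that a certain scalar in $\k$ does not vanish. Recall from \Cref{notation 5.1} that $B(\varphi'')$ is the $3\times(n-2)$ Jacobian dual of $\varphi''$ with respect to $(x,y,zw_0)$, so that $l_j=[x~y~zw_0]\cdot(B(\varphi''))_{\bullet,j}$ for $1\le j\le n-2$. First I would observe that the bottom row of $B(\varphi'')$ is $(0,\dots,0,1)$: among $l_1,\dots,l_{n-2}$ the only way to produce a $zw_0$ term is from the product $w_0\cdot z$, and by \Cref{case i and ii presentation matrix} the entry $z$ occurs in the top row of $\varphi$ only in column $n-2$. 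Hence for every $j\neq n-2$ the $2\times2$ minors of $B(\varphi'')$ on rows $\{1,3\}$ and on rows $\{2,3\}$, using columns $\{j,n-2\}$, are exactly the coefficients of $x$ and of $y$ in $l_j$. Writing $l_j=xP_j+yQ_j$ for these linear forms, this shows $P_j,Q_j\in I_2(B(\varphi''))$ for all $j\neq n-2$.

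Next I would specialize to the leading column of the block $M_{r'}$, namely column $r+1$. Since $r'\ge 2$ we have $r+1<n-2$, so this column is covered by the previous paragraph; and as the first column of a square Kronecker block it carries only the diagonal form $\delta:=\alpha x+\beta y$ of $M_{r'}$ and no super-diagonal entry. If $a_{r+1}x+b_{r+1}y$ denotes its top ($w_0$-)entry, then $l_{r+1}=w_0(a_{r+1}x+b_{r+1}y)+w_{r+2}\,\delta$, so that $P_{r+1}=a_{r+1}w_0+\alpha w_{r+2}$ and $Q_{r+1}=b_{r+1}w_0+\beta w_{r+2}$ both lie in $I_2(B(\varphi''))$. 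Cancelling $w_{r+2}$ gives $\beta P_{r+1}-\alpha Q_{r+1}=(\beta a_{r+1}-\alpha b_{r+1})\,w_0\in I_2(B(\varphi''))$.

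The heart of the argument, and the step I expect to be the main obstacle, is to show that the scalar $\beta a_{r+1}-\alpha b_{r+1}$ is nonzero; this is where the height hypothesis is forced into play, since the minor computation by itself only places a scalar multiple of $w_0$ in the ideal. Suppose it vanished. Then $a_{r+1}x+b_{r+1}y$ would be a $\k$-multiple of $\delta$, and since the only other nonzero entry of column $r+1$ of $\varphi$ is the single diagonal entry $\delta$ (the $L'_r$-rows and the remaining $M_{r'}$-rows vanish in this column), that whole column would equal $\delta$ times a constant vector. Every maximal minor of the $n\times(n-1)$ matrix $\varphi$ uses column $r+1$, so every such minor would be divisible by $\delta$; thus $I=I_{n-1}(\varphi)\subseteq(\delta)$ and $\hgt I\le 1$, contradicting that $I$ is a height-two ideal (\Cref{Main Setting}; recall also $\hgt I_{n-1}(\varphi)=2$ from \Cref{minimal prime is unique}). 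Therefore $\beta a_{r+1}-\alpha b_{r+1}$ is a nonzero element of $\k$, and dividing through gives $w_0\in I_2(B(\varphi''))$, as claimed. Two features of this argument are worth recording: the hypothesis $r'\ge 2$ is used precisely to ensure that the leading column of $M_{r'}$ is distinct from the distinguished column $n-2$ (so that $P_{r+1},Q_{r+1}$ are individually extractable), and the fact $\delta\neq 0$ is automatic because the diagonal of a genuine square block in the Kronecker normal form of \Cref{matrix pencils} is a nonzero linear form; notably, indecomposability of $M_{r'}$ is not needed here.
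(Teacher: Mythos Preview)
Your proof is correct and follows essentially the same strategy as the paper: extract the $x$- and $y$-coefficients $P_{r+1}=a_{r+1}w_0+\alpha w_{r+2}$ and $Q_{r+1}=b_{r+1}w_0+\beta w_{r+2}$ of $l_{r+1}$ as $2\times2$ minors of $B(\varphi'')$ (using that the third row is $(0,\dots,0,1)$), eliminate $w_{r+2}$ to obtain a scalar multiple of $w_0$, and then rule out the vanishing of that scalar by observing that it would force column $r+1$ of $\varphi$ to be divisible by the linear form $\delta$, dropping $\hgt I_{n-1}(\varphi)$ to $1$. Your write-up is in fact a bit cleaner than the paper's in two places: you form the combination $\beta P_{r+1}-\alpha Q_{r+1}$ directly rather than dividing separately by $\alpha$ and $\beta$, and you make explicit which minors realize $P_j,Q_j$ and exactly where the hypothesis $r'\ge2$ enters (to keep column $r+1$ away from the distinguished column $n-2$).
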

\begin{proof}
    Suppose $w_0 \notin I_2(B(\varphi''))$ then the Jacobian dual is,
    \begin{align*}
    B(\varphi'')=\begin{pmatrix}
            a_1w_0+w_1 & a_2w_0+w_2 & \cdots & a_rw_0+w_r & a_{r+1}w_0+a^{r_1}w_{r+2} & \cdots & *\\
            b_1w_0+w_2 & b_2w_0+w_3 & \cdots & a_rw_0+w_{r+1} & b_{r+1}w_0+b^{r_1}w_{r+2} &\cdots & *\\
            0 & 0 & \cdots & 0 & 0 & \cdots & *
        \end{pmatrix}.
    \end{align*}
    Now, for $ 2 \leq j \leq r$, $a_jw_0+w_j, b_{j-1}w_0+w_j \in I_2(B(\varphi''))$, thus $(a_j-b_{j-1})w_0 \in I_2(B(\varphi''))$. But $w_0 \notin I_2(B(\varphi''))$, therefore $a_j=b_{j-1}$. From the $(r+1)$-th column of $B(\varphi'')$, we have $a_{r+1}w_0+a^{r_1}w_{r+2}, b_{r+1}w_0+b^{r_1}w_{r+2} \in I_2(B(\varphi''))$. Thus, $a'w_0+w_{r+2}, b'w_0+w_{r+2} \in I_2(B(\varphi''))$, where $a'=a_{r+1}/a^{r_1}$ and $b'=b_{r+1}/b^{r_1}$. Hence, $a'w_0-b'w_0 \in I_2(B(\varphi''))$. Again, $w_0 \notin I_2(B(\varphi''))$ will imply $a'=b'$, that is, $a_{r+1}=a^{r_1}c$ and $b_{r+1}=b^{r_1}c$, for some $c \in \k$ which would imply $r$-th symmetric equation, $l_r=(a^{r_1}x+b^{r_1}y)(cw_0+w_{r+2})$. Now, calculating the $I_{n-1}(\varphi)$ along the $r$-th column, we get $I_{n-1}(\varphi) \subseteq (a^{r_1}x+b^{r_1}y)$, that is, $\hgt I_{n-1}(\varphi) =1$, which is a contradiction as $I_{n-1}(\varphi)=I$, is of height two. Thus, $w_0 \in I_2(B(\varphi''))$. 
\end{proof}

\begin{theorem}\label{subcase two}
    With the assumption of \Cref{Main Setting}. If $\varphi'$ is strictly equivalent to $[L'_r~M_{r'}]$ and $r' \geq 2$, then
        \begin{align*}
        \overline{\mathcal{K}}^{(2)}=\overline{\mathcal{K}}^2 + \overline{(zw_{n-2}\a_{n-3})} \\  +\overline{(dw_{n-2}+bw_{n-1})\a_1-(cw_{n-2}+aw_{n-1})\a_2)} \\ + \overline{(cw_{n-2}+aw_{n-1})\a_3-(dw_{n-2}+bw_{n-1})\a_2+(cw_{n-2}+aw_{n-1})\a_1)} \\ + \overline{(dw_{n-2}+bw_{n-1})\a_{i+1}-(cw_{n-2}+aw_{n-1})\a_i ; 3 \leq i \leq n-5)},
        \end{align*}
        where $a,b,c,d \in \k$ such that the $(n-3)$ symmetric equation $l_{n-3}=zw_0+(aw_{n-1}+cw_{n-2})x+(bw_{n-1}+dw_{n-2}y)$
\end{theorem}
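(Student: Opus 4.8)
The plan is to follow the same two-step strategy employed in \Cref{w_0 is regular over ideal}, exploiting the identity $\overline{\mathcal{K}}^{(2)}=\overline{\mathcal{K}}^2:(\overline{x})^\infty$ (valid since $\overline{x}$ is a nonzerodivisor on $\overline{\mathcal{K}}$ by \Cref{xi is nzd on K}). Write $\mathcal{C}$ for the candidate ideal displayed on the right-hand side of the statement; by construction $\overline{\mathcal{K}}^2\subseteq\mathcal{C}$. First I would establish the inclusion $\mathcal{C}\subseteq\overline{\mathcal{K}}^{(2)}$ by checking that $\overline{x}$ carries each displayed generator into $\overline{\mathcal{K}}^2$. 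The reverse inclusion then follows purely formally once $\overline{x}$ is shown to be a nonzerodivisor on $B/\mathcal{C}$: in that case $\mathcal{C}=\mathcal{C}:(\overline{x})^\infty\supseteq\overline{\mathcal{K}}^2:(\overline{x})^\infty=\overline{\mathcal{K}}^{(2)}$, so the two ideals coincide.

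For the first inclusion I would record the explicit Jacobian dual $B(\varphi'')$ in the $[L'_r~M_{r'}]$ shape, invoking \Cref{the case where w_0 is always zd} to know $w_0\in I_2(B(\varphi''))$ and reading off the minors $\a_i$ together with the scalars $a,b,c,d$ from the form $l_{n-3}=zw_0+(aw_{n-1}+cw_{n-2})x+(bw_{n-1}+dw_{n-2})y$. The essential computational input is the relation $x\a_i=zw_0\,\a_{i_x}$ of \cite[Lemma 4.3]{BoswellMukundan16}, used exactly as in \Cref{x.N_1 is subset of K^2}: multiplying a generator such as $(dw_{n-2}+bw_{n-1})\a_{i+1}-(cw_{n-2}+aw_{n-1})\a_i$ by $\overline{x}$ replaces each $x\a_j$ by $zw_0\,\a_{j_x}$, factors out $\overline{zw_0}$, and places the product in $\overline{\mathcal{K}}^2$. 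The generator $\overline{zw_{n-2}\a_{n-3}}$ and the three distinguished low-index relations ($i=1,2,3$) are treated in the same manner.

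The heart of the argument, and the main obstacle, is proving $\overline{x}$ is regular on $B/\mathcal{C}$, equivalently that $x$ is a nonzerodivisor modulo the lift $\mathcal{J}+\mathcal{C}$ in $S$. As in \Cref{w_0 is regular over ideal}, I would fix the reverse-lexicographic order with $x<y<z<w_0<\cdots<w_{n-1}$, introduce the auxiliary variable $t$, and reduce the claim (via the $S$-polynomial/saturation criterion) to showing that $x$ divides no leading term of any element of a Gröbner basis $G$ of $\mathcal{J}+\mathcal{C}$. The real labour is to start from the natural generating set (the symmetric equations $l_i$, the minors $c_{jk}$, the monomial $z^2w_0^2$, and the $M_{r'}$-twisted relations) and run Buchberger's algorithm, showing each $S$-pair either has coprime leading terms or reduces to zero, or else produces a new basis element (analogous to the $h_{m+1},h_{m+2}$ of \Cref{w_0 is regular over ideal}) whose leading term is again free of $x$, after which the computation stabilizes.

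The case $r'\ge 2$ diverges from the $[L'_{n-2}]$ case precisely because the $M_{r'}$ block forces $w_0\in I_2(B(\varphi''))$ and injects the coefficients $a,b,c,d$, so the defining relations are the $M$-twisted analogues of the $\eta_i$ appearing earlier; the bookkeeping in the $S$-polynomial reductions is correspondingly heavier, and the three special generators for $i=1,2,3$ must be cleared before the uniform family $3\le i\le n-5$ can be processed. Once every leading term of the Gröbner basis is seen to be free of $x$, I would conclude $\mathcal{C}=\overline{\mathcal{K}}^{(2)}$ either directly from the saturation identity above or, equivalently, through the associated-prime localization argument of \Cref{w_0 is regular over ideal}.
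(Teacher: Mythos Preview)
Your proposal is correct and mirrors exactly the approach the paper intends: the paper's proof of this theorem consists of the single line ``The proof is similar to the proof of \Cref{w_0 is regular over ideal},'' and your two-step outline (containment via $\overline{x}\cdot\mathcal{C}\subseteq\overline{\mathcal{K}}^2$ as in \Cref{x.N_1 is subset of K^2}, then regularity of $\overline{x}$ on $B/\mathcal{C}$ via the same Gr\"obner-basis/leading-term argument) is precisely that similarity made explicit.
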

\begin{proof}
The proof is similar to the proof of \Cref{w_0 is regular over ideal}.    
\end{proof}

\subsection{When $r'=1$} 
Since $r'=1$ therefore $M_{r'}=M_1=[ax+by], a,b \in \k$. Define the $B$-ideals, $N_3=(\overline{z^2w_0^2})+(\{\overline{-aw_{n-1}\a_{i+1}+bw_{n-1}\a_i} ; 1\leq i \leq {n-4}\})$ and  \begin{align*}N_4=(\overline{z^2w_0^2})+(\{\overline{-azw_{n-1}\a_{i+1}+bzw_{n-1}\a_i} ; 1\leq i \leq {n-4}\})\\+(\{{\overline{bw_{n-1}(\a_{i+1}-\a_i)-aw_{n-1}(\a_{i+2}-\a_{i+1})}} ; 1 \leq i \leq n-5\}).\end{align*}

Clearly, $N_i \nsubseteq \overline{\mathcal{K}}^2$, for $i=3,4$. Moreover, one can trace the proof of \Cref{x.N_1 is subset of K^2}, and get $\overline{x} \cdot N_i \subseteq \overline{\mathcal{K}}^2$. That is, $N_i \subseteq \overline{\mathcal{K}}^{(2)}$.

\begin{theorem}\label{subcase three}
    Assume the assumption of \Cref{Main Setting}. If $w_0 \notin I_2(B(\varphi''))$, then  we have $\overline{\mathcal{K}}^{(2)}= {N_3}$. \end{theorem}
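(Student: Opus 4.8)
The containment $N_3\subseteq\overline{\mathcal{K}}^{(2)}$ has already been noted in the paragraph preceding the statement (by tracing \Cref{x.N_1 is subset of K^2}), so the entire task is the reverse inclusion $\overline{\mathcal{K}}^{(2)}\subseteq N_3$. My plan is to deduce it, exactly as in \Cref{w_0 is regular over ideal}, from the single fact that $\overline{x}$ is a nonzerodivisor on $B/N_3$, and then to establish that fact by a Gr\"obner-basis computation.

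First, the reduction. By \Cref{colon ideal of K} the ring $B$ is Cohen--Macaulay, $\overline{\mathcal{K}}^{(2)}$ is unmixed of height one, and $\overline{\mathcal{K}}^{(2)}=(\overline{z^2w_0^2}):\overline{(x,y,zw_0)}^{(2)}$. Assuming $\overline{x}$ is regular on $N_3$, every $P\in\Ass(B/N_3)$ avoids $\overline{x}$, so $\overline{(x,y,zw_0)}_P=B_P$ and hence $\overline{(x,y,zw_0)}^{(2)}_P=B_P$; since $\overline{z^2w_0^2}\in N_3\subseteq\overline{\mathcal{K}}^{(2)}$, localizing gives
\begin{align*}
(\overline{z^2w_0^2})_P\subseteq (N_3)_P\subseteq\overline{\mathcal{K}}^{(2)}_P=(\overline{z^2w_0^2})_P:B_P=(\overline{z^2w_0^2})_P .
\end{align*}
Thus $(N_3)_P=\overline{\mathcal{K}}^{(2)}_P$ at every $P\in\Ass(B/N_3)$. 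Because $N_3\subseteq\overline{\mathcal{K}}^{(2)}$, the associated primes of $\overline{\mathcal{K}}^{(2)}/N_3$ lie among those of $B/N_3$, where the quotient vanishes; hence $\overline{\mathcal{K}}^{(2)}/N_3=0$ and $N_3=\overline{\mathcal{K}}^{(2)}$.

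It remains to show $\overline{x}$ is regular on $N_3$, i.e.\ $(\mathcal{J}+N_3):(x)=\mathcal{J}+N_3$ in $S$. Here I would reuse verbatim the colon technique of \Cref{w_0 is regular over ideal}: adjoin an indeterminate $t$ with $x<\cdots<w_{n-1}<t$ under the reverse-lexicographic order, so that a Gr\"obner basis of $(\mathcal{J}+N_3)\cap(x)$ is extracted from $t(\mathcal{J}+N_3)+(1-t)x$, and invoke the identity $S(t h_i,(1-t)x)=x\,h_i$ valid whenever $x\nmid\inn_<(h_i)$. Consequently it suffices to exhibit a Gr\"obner basis of $\mathcal{J}+N_3$ whose leading terms are all free of $x$.

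The heart of the matter, and the main obstacle, is the Buchberger analysis, which differs from \Cref{w_0 is regular over ideal} because of the $M_1$-block. Writing the first-row entries of $\varphi$ as $a_ix+b_iy$, the hypothesis $w_0\notin I_2(B(\varphi''))$ again forces $a_i=b_{i-1}$ across the $L'$-block, while the block $M_1=[ax+by]$ makes the last column of $A$ equal to $[aw_{n-1},\,bw_{n-1}]^{t}$. Hence $\a_i=w_{n-1}(bw_i-aw_{i+1})$ and the generators of $N_3$ acquire the factored shape
\begin{align*}
\eta_i=-aw_{n-1}\a_{i+1}+bw_{n-1}\a_i=w_{n-1}^2\bigl(b^2w_i-2abw_{i+1}+a^2w_{i+2}\bigr),
\end{align*}
carrying an extra factor $w_{n-1}^2$ absent in the single-block case, with leading term $a^2 w_{i+2}w_{n-1}^2$ (or $w_iw_{n-1}^2$ when $a=0$). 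Starting Buchberger's algorithm from $G_0=\{l_1,\dots,l_{n-2}\}\cup\{c_{jk}\}\cup\{z^2w_0^2,\ \eta_i\}$, I would verify stage by stage that each $S$-pair among the $l_{i'}$, the $c_{jk}$, $z^2w_0^2$ and the $\eta_i$ either has coprime leading terms or reduces, modulo the $l_i$ and $c_{jk}$, to an $x$-multiple of another $\eta$ or $c$, and that the finitely many genuinely new generators (the analogues of $h_{m+1},h_{m+2}$) still have leading terms divisible only by the $w_j$ and $y$, never by $x$. The delicate point is precisely this bookkeeping: the shifted leading terms $w_{i+2}w_{n-1}^2$ alter the overlap pattern of the pairs $S(l_{i'},\eta_i)$ and $S(c_{jk},\eta_i)$ relative to \Cref{w_0 is regular over ideal}, and one must check that the algorithm terminates with the $x$-free leading-term property intact; granting this, the colon equality, and hence the theorem, follows.
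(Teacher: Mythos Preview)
Your proposal is correct and follows essentially the same approach as the paper: note $N_3\subseteq\overline{\mathcal{K}}^{(2)}$, prove that $\overline{x}$ is regular on $B/N_3$ by a Gr\"obner-basis computation mimicking \Cref{w_0 is regular over ideal}, and then localize at the associated primes of $N_3$ to force equality. The paper's own proof is in fact even terser---it simply says ``trace the proof of \Cref{w_0 is regular over ideal}'' without spelling out the modified Buchberger bookkeeping---so your discussion of how the $M_1$-block reshapes $\a_i$ and $\eta_i$, and hence the overlap pattern of the $S$-pairs, already goes beyond what the paper records.
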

\begin{proof}
Since $N_3 \subseteq \overline{\mathcal{K}}^{(2)}$.Now one can trace the proof of \Cref{w_0 is regular over ideal} and show that $\overline{x}$ is regular over $N_3$. Thus, for all $P \in \Ass{N_3}$, $\overline{(x,y,zw_0)}^{(2)}_P=B_P$ and hence ${N_3}_P = \overline{\mathcal{K}}^{(2)}_P.$     
\end{proof}

\begin{theorem}\label{subcase four}
    Assume the assumption of \Cref{Main Setting}. If $w_0 \in I_2(B(\varphi''))$, then we have $\overline{\mathcal{K}}^{(2)}= \overline{\mathcal{K}}^2+{N_4}$. \end{theorem}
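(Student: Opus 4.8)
The plan is to prove the two inclusions separately, with all the content lying in the reverse one. The inclusion $\overline{\mathcal{K}}^2 + N_4 \subseteq \overline{\mathcal{K}}^{(2)}$ is already available: $\overline{\mathcal{K}}^2 \subseteq \overline{\mathcal{K}}^{(2)}$ always holds, and $N_4 \subseteq \overline{\mathcal{K}}^{(2)}$ was recorded in the paragraph preceding \Cref{subcase three}. So the whole task is to show $\overline{\mathcal{K}}^{(2)} \subseteq \overline{\mathcal{K}}^2 + N_4$, and for this I would mimic part $(2)$ of \Cref{w_0 is regular over ideal}. Writing $\mathcal{C} := \overline{\mathcal{K}}^2 + N_4$, the key reduction is: \emph{if $\overline{x}$ is a nonzerodivisor on $B/\mathcal{C}$, then $\mathcal{C} = \overline{\mathcal{K}}^{(2)}$.}

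To see why this reduction suffices, recall from \Cref{colon ideal of K} that $\overline{\mathcal{K}}^{(2)} = \overline{(zw_0)}^{(2)} : \overline{(x,y,zw_0)}^{(2)}$, with $\overline{(zw_0)}^{(2)} = \overline{(z^2w_0^2)}$ as in \Cref{w_0 is regular over ideal}. If $\overline{x}$ avoids every $P \in \Ass(B/\mathcal{C})$, then at such a $P$ the ideal $\overline{(x,y,zw_0)}_P = B_P$ is the unit ideal, hence so is its symbolic square; localizing the colon description gives $\overline{\mathcal{K}}^{(2)}_P = \overline{(z^2w_0^2)}_P : B_P = \overline{(z^2w_0^2)}_P \subseteq \mathcal{C}_P$, the last inclusion because $\overline{z^2w_0^2} \in N_4 \subseteq \mathcal{C}$. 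Combined with $\mathcal{C} \subseteq \overline{\mathcal{K}}^{(2)}$, this yields $\mathcal{C}_P = \overline{\mathcal{K}}^{(2)}_P$ for every $P \in \Ass(B/\mathcal{C})$. Since $B/\mathcal{C}$ injects into $\prod_{P \in \Ass(B/\mathcal{C})} (B/\mathcal{C})_P$, equality at all associated primes forces $\mathcal{C} = \overline{\mathcal{K}}^{(2)}$ globally.

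It therefore remains to prove that $\overline{x}$ is regular on $B/\mathcal{C}$, equivalently that $(\mathcal{J} + (z^2w_0^2) + N_4) : (x) = \mathcal{J} + (z^2w_0^2) + N_4$ in $S$. Following the auxiliary-variable trick of \Cref{w_0 is regular over ideal}, I would compute the intersection with $(x)$ inside $S[t]$; the colon stabilizes to the ideal itself exactly when, in a Gröbner basis of $\mathcal{J} + (z^2w_0^2) + N_4$ for the reverse-lexicographic order with $x < y < z < w_0 < \cdots < w_{n-1}$, the variable $x$ divides no leading term. So the problem collapses to exhibiting such a Gröbner basis. Here I would use the $r'=1$, $w_0 \in I_2(B(\varphi''))$ normalization: the block $M_1 = [ax+by]$ fixes the shape of the last linear symmetric equation, while $w_0 \in I_2(B(\varphi''))$ forces (after the $\k$-linear change used in \Cref{w_0 is regular over ideal}$(2)$) some $a_i \ne b_{i-1}$, which is precisely what produces the $z$-twisted generators $\overline{-azw_{n-1}\a_{i+1} + bzw_{n-1}\a_i}$ and the second family of $N_4$. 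Starting from the generating set made of the $l_i$, the relations $c_{jk}$, the element $z^2w_0^2$, and the $N_4$-generators, I would run Buchberger's algorithm as before: most S-pairs vanish because their leading terms are coprime (the $c_{jk}$ and $\eta$-type leading terms lie in $\k[\underline{w}]$ and are coprime to $z^2w_0^2$ and to the $y$-leading terms of the $l_i$), while the finitely many non-coprime pairs reduce via the $l_i$, $c_{jk}$ and a bounded number of adjoined elements analogous to $h_{m+1}, h_{m+2}$, every adjoined leading term again being divisible only by $y$, $z$, $w_0$ and the $w_j$.

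The genuine obstacle is this last Gröbner computation: organizing the S-pair case analysis so that each nontrivial S-polynomial visibly reduces to an $x$- or $w_0$-multiple of an already-listed generator (mirroring the identities $S(l_{i'}, h_{m+1}) = -x^2\eta_{i'-1}$ and $S(h_{m+1}, h_{m+2}) = -w_0 h_{m+1}$ of \Cref{w_0 is regular over ideal}), and verifying that the extra $M_1$-block contributions never create a leading term divisible by $x$. Everything else — the localization argument and the two easy inclusions — is formal, and I expect the computation to run parallel to \Cref{subcase two} and \Cref{w_0 is regular over ideal}$(2)$, differing only in the explicit form of the $M_1$-induced symmetric equation and the corresponding $\a$-relations.
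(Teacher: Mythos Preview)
Your proposal is correct and matches the paper's approach exactly: the paper's own proof is the single line ``The proof is similar to the proof of \Cref{w_0 is regular over ideal},'' and you have faithfully unpacked that reference, including the reduction to $\overline{x}$-regularity and the Gr\"obner-basis verification in reverse-lex order. One small slip: when you pass to $S$ you write the colon condition for $\mathcal{J} + (z^2w_0^2) + N_4$, but since $\mathcal{C} = \overline{\mathcal{K}}^2 + N_4$ the lift must include the generators of $\mathcal{K}^2$ (products $\a_i\a_{i'}$, $zw_0\a_i$, etc.), exactly as in part~(2) of \Cref{w_0 is regular over ideal}; your later list of the starting generating set should be amended accordingly.
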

\begin{proof}
The proof is similar to the proof of \Cref{w_0 is regular over ideal}.    
\end{proof}

\section{Description of Defining ideal of Rees algebra when presentation matrix is in Case II}\label{section 5 and rees algebra in Case ii}

Assume the assumption of \Cref{Main Setting}, and throughout this section, the presentation matrix $\varphi$ is of Case II (see \Cref{case i and ii presentation matrix}). Although the description of the Rees algebra in this case is similar to \Cref{another characterization of defining ideal of Rees algebra} and comparable to \Cref{colon ideal related to defining ideal of Rees algebra}. Again we compute this colon ideal using methods in \cite{SurajMukundan24}, where we identify the the defining equations of the Rees algebra to  a height one unmixed ideal in a Cohen-Macaulay ring.

\begin{notation}
Assume the \Cref{notation for case I}.  
Recall that the defining ideal of symmetric algebra is $\mathcal{L} =(l_1,\dots,l_{n-1})$.  Since $l_{n-1},l_{n-2} \in \mathcal{L}$, then from the presentation matrix $\varphi$ (see \Cref{case i and ii presentation matrix}) of $I$ we have $l_{n-2}=zw_0 + xg_1 + yg_2$ where $g_i$'s are linear polynomial in $w_j; 1 \leq j \leq n-1$ and $l_{n-1}=x^2h_1+y^2h_2+xyh_3+xzh_4+yzh_5$ where $h_i's$ are linear polynomial in $\k[\underline{w}]$ with $h_4,h_5 \notin \k[w_0]$.
\end{notation}

We will collect some of the results of the previous section, which also holds for the presentation matrix $\varphi$ in Case II.

\begin{theorem}\label{ second colon ideal related to defining ideal of Rees algebra}\cite[Lemma 5.1]{SurajMukundan24}
 With the assumption of \cref{Main Setting}, and $\varphi$ in Case II. Then,
 \begin{enumerate}
     \item $\mathcal{A}=\mathcal{L}:(x,y,zw_0)^{\infty}$.
     \item $\mathcal{L} : (x,y,zw_0) =\mathcal{L} + I_3(B(\varphi)).$
     \item $\mathcal{A}=\mathcal{L}:(x,y,zw_0)^2$.
 \end{enumerate}   
\end{theorem}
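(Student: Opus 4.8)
The plan is to observe that all three assertions are the verbatim statements of \Cref{colon ideal related to defining ideal of Rees algebra}, \Cref{colon ideal in second case} and \Cref{Defining ideal in form of saturation of an ideal} (together with the cited \cite{SurajMukundan24}), and that the proofs of those results never invoke the Case~I hypothesis: they use only the structural features shared by every matrix allowed in \Cref{Main Setting}. So the real work is to check that each ingredient survives the passage to Case~II, and the proof then transfers line by line.

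First I would record the one structural fact on which the Jacobian dual depends. In Case~II the entry $z$ sits in the top position of the $(n-2)$-th column of $\varphi$, whose remaining entries are linear forms in $\k[x,y]$; hence the $(n-2)$-th symmetric equation is again $l_{n-2}=zw_0+xg_1+yg_2$ with $g_1,g_2$ linear in the $w_j$, exactly as in \Cref{notation for case I}. Consequently $\mathcal{U}=(x,y,zw_0)$ is a complete intersection and the identity $[w_0\cdots w_{n-1}]\cdot\varphi=[x~y~zw_0]\cdot B(\varphi)$ defining $B(\varphi)$ holds in the same form. The degree-$2$ last column and the hypothesis $l_{n-1}\notin(x,y,zw_0)^2$, which is what actually distinguishes Case~II from Case~I, plays no role at this stage; it enters only later, in the explicit description of $\overline{\mathcal{A}}$ through $\mathcal{K}'$.

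For part (2) I would reprise the proof of \Cref{colon ideal in second case}. It suffices to show $\hgt(\mathcal{L}:\mathcal{U})\geq n-1$, after which \Cref{deformation}, \Cref{computation of generic residual intersection} and \Cref{generic residual intersection is Cohen Macaulay} identify $\mathcal{L}:\mathcal{U}$ as the $(n-1)$-residual intersection $\mathcal{L}+I_3(B(\varphi))$ of the complete intersection $\mathcal{U}$. The height bound follows from the same dichotomy applied to a minimal prime $P$ of $\mathcal{L}:\mathcal{U}$ and $p=P\cap R$: if $(x,y)\subseteq p$ one uses that $I_3(B(\varphi))$ is prime of height $n-3$, and if $(x,y)\nsubseteq p$ one uses local linear type (\Cref{Lan locally linear type}) together with $\Min(I_{n-2}(\varphi))=\{(x,y)\}$ (\Cref{minimal prime is unique}). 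Both \Cref{Lan locally linear type} and \Cref{minimal prime is unique} are established for the whole of \Cref{Main Setting}, and the primeness and height of $I_3(B(\varphi))$ are governed by the submatrix $\varphi''$ of the first $n-2$ columns, which carries the same linear shape in every case; so the argument is unchanged.

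Finally, parts (1) and (3) are the saturation statements already available under \Cref{Main Setting}: part (3) is \Cref{Defining ideal in form of saturation of an ideal} specialised to $m=2$, and part (1) is its infinite analogue, both resting on part (2) and on $\mathcal{A}=\mathcal{L}:(x,y)^\infty$ from \Cref{colon ideal related to defining ideal of Rees algebra}. The point I expect to be the main, if minor, obstacle is to confirm that none of the height and primeness inputs---namely $\hgt(\mathcal{L}:\mathcal{U})\geq n-1$, $\hgt I_3(B(\varphi))=n-3$ and $\Min(I_{n-2}(\varphi))=\{(x,y)\}$---secretly used the Case~I shape $l_{n-1}\in(x,y,zw_0)^2$. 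Since each of these quantities is controlled entirely by $\varphi''$ and by the $\Gs{2}$-but-not-$\Gs{3}$ hypothesis, which are common to all three cases, the case distinction is invisible to this theorem and the earlier proofs carry over verbatim.
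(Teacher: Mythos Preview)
Your proposal is correct and matches the paper's approach: the paper itself gives no proof for this theorem beyond the citation to \cite{SurajMukundan24}, implicitly relying on the fact that the Case~I arguments (\Cref{ second colon ideal related to defining ideal of Rees algebra}, \Cref{colon ideal in second case}, \Cref{Defining ideal in form of saturation of an ideal}) transfer verbatim since they depend only on the shape of the first $n-2$ columns of $\varphi$ and on the $\Gs{2}$/not-$\Gs{3}$ hypothesis, not on whether $l_{n-1}\in(x,y,zw_0)^2$. Your explicit check that each ingredient---the form $l_{n-2}=zw_0+xg_1+yg_2$, the complete intersection $\mathcal{U}$, the height dichotomy, and the inputs \Cref{Lan locally linear type} and \Cref{minimal prime is unique}---survives in Case~II is exactly the verification the paper leaves to the reader.
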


\begin{notation}
Recall the \Cref{notation 5.1}, $\mathcal{J}=(l_1,\dots,l_{n-2}):(x,y,zw_0)$ and  $B=S/{\mathcal{J}}$ is Cohen-Macaulay domain of dimension 5. Define the $B$-ideal, $\mathcal{K'}$ such that
\begin{align*}
\mathcal{K'}=\overline{(z^2w_0)}:\overline{(x,y,zw_0)}^{2}.
\end{align*}
\end{notation}

From the previous section, $\varphi'$ is strictly equivalent to block diagonal matrix $[L_{n-2}']$ or $[L_r'~M_{r'}]$. Depending on wether $w_0 \in I_2(B(\varphi''))$ or $w_0 \notin I_2(B(\varphi''))$,
we give the complete characterization of the $B$-ideal,
${\mathcal{K}}'$.

\begin{theorem}\label{subcase five}
    With the assumption of \Cref{Main Setting}.
    \begin{enumerate}
        \item Let $\varphi'$ is strictly equivalent to $[L'_{n-2}]$.
        \begin{enumerate}
            \item If $w_0 \notin I_2(B(\varphi''))$, then the $B$-ideal,
            \begin{align*}\mathcal{K'}=(\overline{z^2w_0})+(\{ \overline{z\a_i} ; 1 \leq i \leq n-3 \}) \\ +(\{\overline{-w_{n-2}\a_{j+1}+w_{n-1}\a_j} ; 1\leq j \leq {n-4}\}).\end{align*}
            \item If $w_0 \in I_2(B(\varphi''))$, then the $B$-ideal,
            \begin{align*}
            \mathcal{K'}=(\overline{z^2w_0})+(\{\overline{z\a_i} ; 1 \leq i \leq n-3\})+(\{ \overline{\a_i \a_{i'}} ; 1 \leq i,i' \leq n-3 \}) \\+(\{{\overline{w_{n-1}(\a_{i+1}-\a_i)-w_{n-2}(\a_{i+2}-\a_{i+1})}} ; 1 \leq i \leq n-5\}).\end{align*}
        \end{enumerate}
        \item Let $\varphi'$ be strictly equivalent to $[L'_r~M_{r'}]$ and $r' \geq 2$.
        \begin{enumerate}
            \item Then $w_0$ is always a zero divisor over $I_2(B(\varphi''))$.
            \item The $B$-ideal,
            \begin{align*}
                \mathcal{K'}=(\overline{z^2w_0}) + (\{ \overline{z\a_i} ; 1 \leq i \leq n-3\}) + (\{ \overline{\a_i \a_j} ; 1 \leq i,j \leq n-3\}) \\ +((\overline{dw_{n-2}+bw_{n-1})\a_1-(cw_{n-2}+aw_{n-1})\a_2}) \\ + (\overline{(cw_{n-2}+aw_{n-1})\a_3-(dw_{n-2}+bw_{n-1})\a_2+(cw_{n-2}+aw_{n-1})\a_1}) \\ + (\overline{(dw_{n-2}+bw_{n-1})\a_{i+1}-(cw_{n-2}+aw_{n-1})\a_i ; 3 \leq i \leq n-5}), \end{align*}
         where $a,b,c,d \in \k$ such that the $(n-3)$ symmetric equation $l_{n-3}=zw_0+(aw_{n-1}+cw_{n-2})x+(bw_{n-1}+dw_{n-2}y)$.   
        \end{enumerate}
        \item Let $\varphi'$ be strictly equivalent to $[L'_{n-3}~M_1]$, where $M_1=[ax+by]; a,b \in \k$.
        \begin{enumerate}
            \item If $w_0 \notin I_2(B(\varphi'')$, then the $B$-ideal,
            \begin{align*}
                \mathcal{K'}=(\overline{z^2w_0})+(\{\overline{z \a_i} ; 1 \leq i \leq n-3\})+(\{\overline{-aw_{n-1}\a_{j+1}+bw_{n-1}\a_j} ; 1\leq j \leq {n-4}\}).
            \end{align*}
            \item If $w_0 \in I_2(B(\varphi''))$, then the $B$-ideal,
            \begin{align*}
                \mathcal{K'}=(\overline{z^2w_0})+(\{ \overline{z \a_i} ; 1 \leq i \leq n-3 \})+(\{ \overline{\a_i \a_{i'}} ; 1 \leq i,i' \leq n-3\})\\+(\{{\overline{bw_{n-1}(\a_{i+1}-\a_i)-aw_{n-1}(\a_{i+2}-\a_{i+1})}} ; 1 \leq i \leq n-5\}).
            \end{align*}
        \end{enumerate}
    \end{enumerate}
\end{theorem}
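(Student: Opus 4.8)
The plan is to treat each of the three families in \Cref{subcase five} by the same two–inclusion argument already used for Case~I in \Cref{w_0 is regular over ideal} and \Cref{subcase two}--\Cref{subcase four}, the only change being that the colon target $\overline{(z^2w_0^2)}$ and the symbolic square $\overline{\mathcal{K}}^{(2)}$ of Case~I are replaced throughout by $\overline{(z^2w_0)}$ and the ordinary colon $\mathcal{K'}=\overline{(z^2w_0)}:\overline{(x,y,zw_0)}^{2}$. Write $N$ for the explicit $B$-ideal appearing on the right-hand side in the relevant case. I would first show $N\subseteq \mathcal{K'}$, then show $\overline{x}$ is a nonzerodivisor on $N$ by a Gr\"obner basis computation, and finally deduce $N=\mathcal{K'}$ by comparing the two ideals at the associated primes of $B/N$.

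For the forward inclusion I would, exactly as in the proof of \Cref{w_0 is regular over ideal}, first write out the Jacobian dual $B(\varphi'')$ explicitly from the strict-equivalence form of $\varphi'$ and normalise the scalars $a_i,b_i$ according to the dichotomy $w_0\in I_2(B(\varphi''))$ versus $w_0\notin I_2(B(\varphi''))$; for part~(2) this normalisation is unnecessary because \Cref{the case where w_0 is always zd} already forces $w_0\in I_2(B(\varphi''))$ (computing $I_{n-1}(\varphi)$ down the $r$-th column and using $\hgt I=2$). The membership of the listed generators in $\mathcal{K'}$ then follows from the Boswell--Mukundan identities $x\a_i=zw_0\,\a_{i_x}$ and $y\a_i=zw_0\,\a_{i_y}$ (\cite[Lemma 4.3]{BoswellMukundan16}): multiplying $\overline{z\a_i}$ by each generator of $\overline{(x,y,zw_0)}^{2}$ lands in $\overline{(z^2w_0)}$ directly, while for the syzygy-type generators $\overline{-w_{n-2}\a_{j+1}+w_{n-1}\a_j}$ (and their $M_{r'}$ analogues involving the scalars $a,b,c,d$) one traces \Cref{x.N_1 is subset of K^2} verbatim, using that $\overline{x}$ is a nonzerodivisor on the relevant ideal (\Cref{xi is nzd on K}, \Cref{colon ideal of K}).

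The reverse inclusion is where the real computation lives. As in \Cref{w_0 is regular over ideal}, I would fix the reverse lexicographic order with $x<y<z<w_0<\cdots<w_{n-1}$, adjoin the new generators ($\overline{z\a_i}$ in every case, and the products $\overline{\a_i\a_{i'}}$ when $w_0\in I_2(B(\varphi''))$) to the generating set $\{l_i\}\cup\{c_{jk}\}$, and run Buchberger's algorithm on the preimage of $N$ in $S$, checking that the resulting Gr\"obner basis has no leading term divisible by $x$. This yields that $\overline{x}$ is a nonzerodivisor on $N$, so every $P\in\Ass(B/N)$ avoids $\overline{x}$; hence $\overline{(x,y,zw_0)}_P=B_P$ and $\mathcal{K'}_P=\overline{(z^2w_0)}_P=N_P$. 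Since $N\subseteq \mathcal{K'}$, the short exact sequence $0\to \mathcal{K'}/N\to B/N\to B/\mathcal{K'}\to 0$ gives $\Ass(\mathcal{K'}/N)\subseteq\Ass(B/N)$, and the local equalities just obtained force $\mathcal{K'}/N=0$, that is, $N=\mathcal{K'}$.

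The main obstacle is the Gr\"obner basis verification, which must be re-run for each case because the generating sets differ. The new generators $\overline{z\a_i}$ and $\overline{\a_i\a_{i'}}$ create $S$-pairs with the $l_i$ and $c_{jk}$ that did not occur in Case~I, and one must check that each such $S$-polynomial either reduces to zero or has an $x$-free leading term; doing this uniformly in $n$, and keeping track of the scalar parameters $a,b,c,d$ coming from the $M_{r'}$ block in parts~(2) and~(3), is the delicate point. The computation is, however, structurally identical to \Cref{w_0 is regular over ideal}, so the expectation is that the same cancellations (for instance $S$-pairs collapsing to $\pm x^2\eta_{i-1}$ or to $\pm w_0 h_{m+1}$) recur with $z\a_i$ in place of the degree-raising factor, and that the only genuinely new reductions are those involving the quadratic-in-$w$ generators $\overline{\a_i\a_{i'}}$.
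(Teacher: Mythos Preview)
Your proposal is correct and follows essentially the same approach as the paper. The paper's own proof is extremely terse---it simply says to trace the Gr\"obner basis proof of \Cref{w_0 is regular over ideal} to obtain that $\overline{x}$ is regular on the candidate ideal $N$, and then concludes $N_P=\mathcal{K}'_P$ for all $P\in\Ass(N)$---and your proposal is a more detailed unpacking of exactly this strategy, including the explicit short exact sequence step that the paper leaves implicit. One minor wording issue: in your forward-inclusion paragraph, the reference to $\overline{x}$ being a nonzerodivisor is not what actually drives the membership of the syzygy-type generators in $\mathcal{K}'$; rather, iterating the Boswell--Mukundan identity $x\a_i=zw_0\,\a_{i_x}$ twice shows $x^2\eta_j\in(\overline{z^2w_0^2})\subseteq(\overline{z^2w_0})$ directly (and likewise for $xy,\,y^2,\,xzw_0,\,yzw_0,\,z^2w_0^2$), so no nonzerodivisor argument is needed there.
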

\begin{proof}
    Using the Grobner basis technique and tracing the proof of \Cref{w_0 is regular over ideal}, we have $\overline{x}$  regular over the candidate (say $N$) for $\mathcal{K'}$ in each case. Thus, we have $N_p=\mathcal{K'}_p$ for all $p \in \Ass(N)$. Hence $N=\mathcal{K'}$.
\end{proof}

Define  $\mathcal{D}'= \frac {\overline{l_{n-1}}\overline{\mathcal{K'}}}{\overline{z^2w_0}}$, then 
$\mathcal{D}'$ is well defined $B$-ideal (See for example \cite[Proposition 6.5]{SurajMukundan24}). Also $\overline{\mathcal{A}}$ is prime ideal and $\overline{l_{n-1}}\in \overline{\mathcal{A}}$ but $\overline{z},\overline{w_0}\notin \overline{\mathcal{A}}$ therefore $\mathcal{D}' \subseteq \overline{\mathcal{A}}$.

\begin{theorem}\label{equality of D' and A in case iii}
Assume the \Cref{Main Setting}, the $B$-ideals $\mathcal{D}'$ and $\mathcal{\overline{A}}$ are equal.    
\end{theorem}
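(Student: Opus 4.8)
The plan is to run the argument of \Cref{another characterization of defining ideal of Rees algebra} essentially verbatim, substituting the ordinary powers and the denominator $\overline{z^2 w_0}$ of Case II for the symbolic powers and the denominator $\overline{z^2 w_0^2}$ used in Case I. First I would record the structural input, which is the Case II analogue of \Cref{colon ideal of K}: the ring $B=S/\mathcal{J}$ is a Cohen--Macaulay domain of dimension five, the $B$-ideal $\mathcal{K}'$ is unmixed of height one, and one has the dual pair of colon relations
\begin{align*}
\mathcal{K}' = \overline{(z^2 w_0)} : \overline{(x,y,zw_0)}^2, \qquad \overline{(x,y,zw_0)}^2 = \overline{(z^2 w_0)} : \mathcal{K}'.
\end{align*}
Since $B$ is a domain, multiplication by the nonzero fraction $\overline{l_{n-1}}/\overline{z^2 w_0}$ is an injective $B$-module map carrying $\mathcal{K}'$ isomorphically onto $\mathcal{D}'$; hence $\mathcal{D}'$ is, like $\mathcal{K}'$, unmixed of height one by the Serre conditions. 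As $\overline{\mathcal{A}}$ is a height-one prime and the containment $\mathcal{D}'\subseteq\overline{\mathcal{A}}$ has already been established just before the statement, it suffices to prove $\mathcal{D}'_P=\overline{\mathcal{A}}_P$ for every associated prime $P$ of $\mathcal{D}'$, each necessarily of height one.

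I would then split these primes into the same two families as in Case I. For $P$ with $\overline{(x,y,z)}\subseteq P$ or $\overline{(x,y,w_0)}\subseteq P$, one has $\mathcal{K}'\nsubseteq P$, so $\mathcal{K}'_P=B_P$, and the second colon relation above localizes to $\overline{(x,y,zw_0)}^2_P=\overline{(z^2 w_0)}_P$. Using the description $\overline{\mathcal{A}}=\overline{(l_{n-1})}:\overline{(x,y,zw_0)}^2$, which descends to $B$ from $\mathcal{A}=\mathcal{L}:(x,y,zw_0)^2$ in \Cref{ second colon ideal related to defining ideal of Rees algebra}(3) once the vanishing $\overline{l_1}=\cdots=\overline{l_{n-2}}=0$ in $B$ is noted, the primality and the local identity $B_P=\overline{(l_{n-1})}_P:\overline{(x,y,zw_0)}^2_P$ force $\overline{(x,y,zw_0)}^2_P\subseteq\overline{(l_{n-1})}_P$; combined with the reverse containment this gives $\overline{(x,y,zw_0)}^2_P=\overline{(z^2 w_0)}_P=\overline{(l_{n-1})}_P$. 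Substituting into the definition of $\mathcal{D}'$ then yields $\mathcal{D}'_P=\overline{(z^2 w_0)}_P\,\mathcal{K}'_P/\overline{z^2 w_0}=B_P=\overline{\mathcal{A}}_P$.

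For the remaining associated primes, where neither $\overline{(x,y,z)}$ nor $\overline{(x,y,w_0)}$ is contained in $P$, \Cref{Lan locally linear type} shows $I_P$ is of linear type, so $\overline{\mathcal{A}}_P=\overline{(l_{n-1})}_P$; since $\overline{(l_{n-1})}_P\subseteq\mathcal{D}'_P$ and $\mathcal{D}'\subseteq\overline{\mathcal{A}}$, equality follows at $P$ as well. Having matched $\mathcal{D}'_P=\overline{\mathcal{A}}_P$ at every height-one associated prime of $\mathcal{D}'$, the two unmixed height-one $B$-ideals coincide.

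The one piece of genuinely new work is the structural input, namely verifying the Case II analogue of \Cref{colon ideal of K}, and in particular the two displayed colon relations with the \emph{ordinary} square $\overline{(x,y,zw_0)}^2$ and the denominator $\overline{z^2 w_0}$ in place of the symbolic constructions of Case I. I expect the main subtlety to be confirming that every height-one associated prime of $\mathcal{D}'$, equivalently of $\mathcal{K}'$, lands in one of the two families above; this is where one uses that the associated primes of $\mathcal{K}'$ lie among those of $\overline{(z^2 w_0)}$ and avoid $\overline{(x,y)}$, exactly as in the Case I computation. Once this is in hand the localization argument is formally identical to that of \Cref{another characterization of defining ideal of Rees algebra}.
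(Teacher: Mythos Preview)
Your proposal is correct and takes essentially the same approach as the paper: the paper's proof of \Cref{equality of D' and A in case iii} reads in its entirety ``Proof is similar to the proof of \Cref{another characterization of defining ideal of Rees algebra},'' and what you have written is exactly that adaptation---replacing $\overline{\mathcal{K}}^{(2)}$, $\overline{(zw_0)}^2$, and the symbolic square $\overline{(x,y,zw_0)}^{(2)}$ by $\mathcal{K}'$, $\overline{(z^2w_0)}$, and the ordinary square $\overline{(x,y,zw_0)}^2$, and then running the same two-case localization argument on the height-one associated primes of $\mathcal{D}'$.
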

\begin{proof}
Proof is similar to the proof of \Cref{another characterization of defining ideal of Rees algebra}.    
\end{proof}

\section{Description of Defining ideal of Rees algebra when presentation matrix is in Case III}\label{section 6 and rees algebra in case iii}
Assume the \Cref{Main Setting}, and that the presentation matrix $\varphi$ is of Case III (see \Cref{case i and ii presentation matrix}).  
In this section we will show there exist some non-zero element in $\mathcal{A}$ which does not belongs to $\mathcal{L}$ and that this element can be computed from the Jacobian dual matrix. Also we will compute the defining ideal $\mathcal{A}$ of Rees algebra $\rees(I)$ explicitly and prove that it is Cohen-Macaulay.

\begin{notation}
Consider the $S$-ideal, $(x,y,z^2w_0)$ then $\underline{w} \cdot \varphi = (x,y,z^2w_0) \cdot B(\varphi)$. Here $B(\varphi)$ is Jacobian dual of presentation matrix $\varphi$ with respect to ideal $(x,y,z^2w_0)$. Throughout this section we will follow this definition of Jacobian dual.

Let $J=(l_1,\dots,l_{n-2}) \subseteq \mathcal{L}$ and $\varphi''$ be $n \times (n-2)$ submatrix of $\varphi$, obtained after deleting the last column of $\varphi$. From presentation matrix $\varphi$ (Case III of \Cref{case i and ii presentation matrix}), we have $J \subseteq (x,y)$ thus $[l_1~\cdots~l_{n-2}]=[x~y] \cdot B(\varphi'')$. Define the $S$-ideal, $\mathcal{J}=J:(x,y)$ and consider the quotient ring $B=S/{\mathcal{J}}$. Let $f \in S$ and  $\overline{f}$ be image of $f$ in the ring $B$. Define the $B$-ideal,
\begin{align*}
    \overline{\mathcal{K}}'=(\overline{z^2w_0}) : (\overline{x,y,z^2w_0}).
\end{align*}
\end{notation}

\begin{observation}
    With the assumption of \Cref{Main Setting}, the $S$-ideal, $\mathcal{J}$ is Cohen-Macaulay prime ideal of height $n-2$ and 
    \begin{align*}
        \mathcal{J}=J:(x,y)^{\infty}=J+I_2(B(\phi'')).
    \end{align*}
    Furthermore, the ideal $I_2(B(\phi''))$ is prime ideal of height $n-3$.
\end{observation}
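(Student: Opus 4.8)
The plan is to realize $\mathcal{J}=J:(x,y)$ as an $(n-2)$-residual intersection of the complete intersection $\mathcal{U}=(x,y)$ and then transport the Huneke--Ulrich structure theorems to our situation, exactly along the lines of the proof of \Cref{colon ideal in second case}. Since the first $n-2$ columns of $\varphi$ are linear in $\k[x,y]$, we have $J\subseteq(x,y)$ and $[l_1~\cdots~l_{n-2}]=[x~y]\cdot B(\varphi'')$, where $B(\varphi'')$ is a $2\times(n-2)$ matrix whose entries are linear forms in the $w$-variables. Writing $l_i=xB_{1i}+yB_{2i}$, the Cramer-type identities $B_{2j}l_i-B_{2i}l_j=x(B_{1i}B_{2j}-B_{1j}B_{2i})$ and $B_{1i}l_j-B_{1j}l_i=y(B_{1i}B_{2j}-B_{1j}B_{2i})$ show at once that $I_2(B(\varphi''))\subseteq J:(x,y)$, giving the inclusion $J+I_2(B(\varphi''))\subseteq\mathcal{J}$ for free.

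First I would analyze $I_2(B(\varphi''))$ in isolation. Using the Kronecker normal form of $\varphi'$ (\Cref{O block not appear}, \Cref{only one bolck of L'}), so that $\varphi'$ is strictly equivalent to $[L'_{n-2}]$ or to $[L'_r~M_{r_1}\cdots M_{r_k}]$, I would write $B(\varphi'')$ out explicitly; in the $[L'_{n-2}]$ case it is, up to the scalars $a_i,b_i$, the Hankel/scroll matrix $\bigl(\begin{smallmatrix} w_1 & \cdots & w_{n-2}\\ w_2 & \cdots & w_{n-1}\end{smallmatrix}\bigr)$, whose minors are precisely the $c_{jk}$ occurring in \Cref{w_0 is regular over ideal}. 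The decisive point is that $B(\varphi'')$ is a \emph{$1$-generic} matrix of linear forms, so by the theory of $1$-generic matrices (Eisenbud) its ideal of maximal $2\times2$ minors is prime of the expected codimension $(2-2+1)\bigl((n-2)-2+1\bigr)=n-3$, and $S/I_2(B(\varphi''))$ is Cohen--Macaulay. Verifying $1$-genericity block by block from the normal form is the routine-but-essential input at this stage.

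With $\hgt I_2(B(\varphi''))=n-3$ established, I would prove the height bound $\hgt(J:(x,y))\geq n-2$ by the same dichotomy as in \Cref{colon ideal in second case}: for a minimal prime $P$ of $J:(x,y)$ with $\p=P\cap R$, either $(x,y)\subseteq\p$, whence $(x,y)+I_2(B(\varphi''))\subseteq P$ and, as the two summands involve disjoint variable sets, $\hgt P\geq 2+(n-3)=n-1$; or $(x,y)\not\subseteq\p$, whence $I_\p$ is of linear type by \Cref{Lan locally linear type} and \Cref{minimal prime is unique}, forcing $\hgt P\geq n-2$. This makes $\mathcal{J}$ a (geometric) $(n-2)$-residual intersection. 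Since $\mathcal{U}=(x,y)$ is a complete intersection, hence strongly Cohen--Macaulay and $\Gs{\infty}$, \Cref{generic residual intersection is Cohen Macaulay}, \Cref{deformation} and \Cref{computation of generic residual intersection} then give that $\mathcal{J}$ is Cohen--Macaulay of height $n-2$, that $J:(x,y)=J:(x,y)^{\infty}$ (geometric residual), and that the generic formula $I_s=(a_1,\dots,a_s,I_2(X))$ specializes to $\mathcal{J}=J+I_2(B(\varphi''))$.

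The main obstacle is the primeness of $\mathcal{J}$ (and the remaining primeness of $I_2(B(\varphi''))$). I expect to extract it from the $1$-generic/determinantal description: modulo the prime $I_2(B(\varphi''))$ the matrix $\bar B(\varphi'')$ drops to rank one, so $S/I_2(B(\varphi''))$ is the domain coordinate ring of a rational normal scroll (or, in the $M$-block cases, of an irreducible $1$-generic determinantal variety), and the residual relations $\bar l_i=x\bar B_{1i}+y\bar B_{2i}$ cut out an irreducible subvariety of it; combined with the Cohen--Macaulayness from the previous step and the genericity of the specialization, this yields that $S/\mathcal{J}$ is a Cohen--Macaulay domain of dimension $5$. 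Concretely, I would run the argument as in \cite[Observation 5.6, Observation 5.7, Lemma 5.10]{SurajMukundan24} (compare \Cref{colon ideal of K}), where the analogous primeness was proved for the complete intersection $(x,y,zw_0)$; the only substantive change is to replace that three-generated ideal by the two-generated $\mathcal{U}=(x,y)$, which in fact simplifies the Jacobian-dual bookkeeping.
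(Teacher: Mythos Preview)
Your approach is sound and would yield a correct proof, but it differs substantially from the paper's. The paper's argument is a two-line citation: it recalls from \Cref{colon ideal related to defining ideal of Rees algebra} that $\mathcal{A}=\mathcal{L}:(x,y)^\infty$, and then invokes \cite[Proposition 2.2, Theorem 2.3]{BoswellMukundan16} as a black box to obtain simultaneously the equalities $\mathcal{J}=J:(x,y)^\infty=J:(x,y)=J+I_2(B(\varphi''))$, the Cohen--Macaulayness, the primeness, and the height; the primeness of $I_2(B(\varphi''))$ is then read off as a corollary. You instead rebuild this machinery by hand: first establishing $\hgt I_2(B(\varphi''))=n-3$ via $1$-genericity and the Kronecker normal form of $\varphi'$, then running the residual-intersection dichotomy of \Cref{colon ideal in second case} to get the height bound, then applying \Cref{computation of generic residual intersection}--\Cref{deformation} for the explicit description and Cohen--Macaulayness, and finally extracting primeness from the scroll/determinantal structure as in \cite{SurajMukundan24}.

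Two remarks on the details. First, in your height dichotomy the case $(x,y)\not\subseteq\p$ needs one extra line beyond the analogy with \Cref{colon ideal in second case}: there the comparison was with $\mathcal{L}$, but here you work with $J\subsetneq\mathcal{L}$; the missing step is that $\mathcal{L}_\p=\mathcal{A}_\p$ has height $n-1$ and $\mathcal{L}=J+(l_{n-1})$, so $\hgt J_\p\geq n-2$, whence $\hgt P\geq n-2$ since $(J:(x,y))_\p=J_\p$. Second, the $1$-genericity verification for $B(\varphi'')$ in the presence of $M$-blocks is not entirely mechanical and deserves care (cf.\ the obstruction in \Cref{the case where w_0 is always zd}). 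What your route buys is self-containment and an explicit reason for primeness via the determinantal description; what the paper's route buys is brevity, since the entire package is already proved in \cite{BoswellMukundan16} under exactly the hypothesis $\mathcal{A}=\mathcal{L}:(x,y)^\infty$ supplied by \Cref{colon ideal related to defining ideal of Rees algebra}.
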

\begin{proof}
 From \Cref{colon ideal related to defining ideal of Rees algebra}, we have $\mathcal{A}=\mathcal{L}:(x,y)^{\infty}$. Thus \cite[Proposition 2.2, Theorem 2.3] {BoswellMukundan16} imply 
 \begin{align*}
     \mathcal{J}=J:(x,y)^{\infty}=J:(x,y)=J+I_2(B(\varphi'')),
 \end{align*}
is Cohen-Macaulay prime ideal of height $n-2$. And, hence $I_2(B(\varphi''))$ is prime ideal of height $n-3$.
\end{proof}

\begin{observation}
    The ring $B$ is Cohen-Macaulay domain of dimension $5$.
\end{observation}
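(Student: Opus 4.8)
The plan is to read off all three assertions directly from the immediately preceding observation, which already records that $\mathcal{J} = J + I_2(B(\varphi''))$ is a Cohen-Macaulay prime ideal of height $n-2$ in $S$. Each of the three properties claimed for $B = S/\mathcal{J}$ (domain, Cohen-Macaulay, dimension $5$) is then a formal consequence.

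First I would note that since $\mathcal{J}$ is a prime ideal, the quotient $B = S/\mathcal{J}$ is an integral domain. Next, that $\mathcal{J}$ is a Cohen-Macaulay ideal means precisely that $S/\mathcal{J}$ is a Cohen-Macaulay ring, so $B$ is Cohen-Macaulay. The only assertion carrying any computational content is the dimension, and here I would simply count variables: $S = \k[\underline{x},\underline{w}]$ is a polynomial ring in the three variables $x,y,z$ together with the $n$ variables $w_0,\dots,w_{n-1}$, so $\dim S = n+3$. Because $S$ is a Cohen-Macaulay (indeed regular, hence catenary and equidimensional) ring and $\mathcal{J}$ is prime, the dimension formula gives
\begin{align*}
\dim B = \dim S - \hgt \mathcal{J} = (n+3) - (n-2) = 5.
\end{align*}

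There is no genuine obstacle to overcome; the entire substantive work — establishing primeness, the Cohen-Macaulay property, and the height of $\mathcal{J}$ via the identification $\mathcal{J} = J : (x,y)^\infty = J + I_2(B(\varphi''))$ and the cited results of \cite{BoswellMukundan16} — was already carried out in the previous observation. This statement merely packages those facts together with the elementary dimension count above, so the proof is a one-line deduction.
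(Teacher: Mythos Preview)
Your proposal is correct and follows exactly the same approach as the paper: the paper's proof is the single sentence ``Since $\mathcal{J}$ is Cohen-Macaulay prime ideal of the height $n-2$, therefore $B$ is Cohen-Macaulay domain of dimension 5,'' and you have simply unpacked this by noting that $\dim S = n+3$ and computing $\dim B = (n+3)-(n-2)=5$.
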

\begin{proof}
    Since $\mathcal{J}$ is Cohen-Macaulay prime ideal of the height $n-2$, therefore $B$ is Cohen-Macaulay domain of dimension 5.
\end{proof}

\begin{theorem}\label{K in case III}
    With the assumption of \Cref{Main Setting}. The $B$-ideal,
    \begin{align*}
        \overline{\mathcal{K}}''=(\overline{z^2w_0})
        :(\overline{x,y,z^2w_0})=(\overline{z^2w_0}).
    \end{align*}
    Also, $\overline{\mathcal{K}}''$ is Cohen-Macaulay ideal of height one.
\end{theorem}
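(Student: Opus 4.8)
The plan is to first simplify the colon ideal and then read it off from the associated primes of $B/(\overline{z^2w_0})$. Since $\overline{(x,y,z^2w_0)}=(\overline{x},\overline{y})+(\overline{z^2w_0})$ and colon distributes over sums in the second argument, we have $\overline{\mathcal{K}}''=\bigl((\overline{z^2w_0}):(\overline{x},\overline{y})\bigr)\cap\bigl((\overline{z^2w_0}):(\overline{z^2w_0})\bigr)=(\overline{z^2w_0}):(\overline{x},\overline{y})$, because the second factor is all of $B$. Thus it suffices to prove $(\overline{z^2w_0}):(\overline{x},\overline{y})=(\overline{z^2w_0})$. As $B$ is Cohen--Macaulay, I would use the standard criterion that, for a Noetherian ring, $I:\a=I$ if and only if $\a\not\subseteq P$ for every $P\in\Ass(B/I)$; so the goal becomes showing that $(\overline{x},\overline{y})$ is contained in no associated prime of $B/(\overline{z^2w_0})$.

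The key computational input is that the Jacobian dual $B(\varphi'')$ has entries that are linear forms in the $w_j$ alone. Indeed, in Case III every entry of $\varphi''$ is a linear form in $x,y$, so writing $\varphi''_{ij}=a_{ij}x+b_{ij}y$ gives $l_j=x\bigl(\sum_i a_{ij}w_i\bigr)+y\bigl(\sum_i b_{ij}w_i\bigr)$, whence the two rows of $B(\varphi'')$ consist of linear forms in $w_0,\dots,w_{n-1}$ free of $x,y,z$. Consequently $I_2(B(\varphi''))\subseteq\k[w_0,\dots,w_{n-1}]$, and since $J\subseteq(x,y)$ we obtain $\mathcal{J}+(x,y)=I_2(B(\varphi''))+(x,y)$, hence $B/(\overline{x},\overline{y})\cong\k[z,w_0,\dots,w_{n-1}]/I_2(B(\varphi''))$. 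Because $I_2(B(\varphi''))$ is prime of height $n-3$, this quotient is a domain of dimension $(n+1)-(n-3)=4$; since $B$ is a catenary equidimensional domain of dimension $5$, it follows that $(\overline{x},\overline{y})$ is a prime ideal of $B$ of height exactly one. Moreover $z^2w_0$ maps to the nonzero monomial $z^2w_0$ in this quotient, so $z^2w_0\notin\mathcal{J}+(x,y)$; in particular $z^2w_0\notin\mathcal{J}$, so $\overline{z^2w_0}\neq 0$ is a nonzerodivisor on the domain $B$.

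With these facts the conclusion follows quickly. Because $B$ is Cohen--Macaulay and $\overline{z^2w_0}$ is a nonzerodivisor, $B/(\overline{z^2w_0})$ is Cohen--Macaulay and unmixed, so every $P\in\Ass\bigl(B/(\overline{z^2w_0})\bigr)$ is minimal over $(\overline{z^2w_0})$ and has height one in $B$. If some such $P$ contained $(\overline{x},\overline{y})$, then since $(\overline{x},\overline{y})$ is itself a height-one prime and $\hgt P=1$ in the catenary domain $B$, we would get $P=(\overline{x},\overline{y})$; but then $\overline{z^2w_0}\in P=(\overline{x},\overline{y})$ would contradict $z^2w_0\notin\mathcal{J}+(x,y)$. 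Hence no associated prime of $B/(\overline{z^2w_0})$ contains $(\overline{x},\overline{y})$, giving $(\overline{z^2w_0}):(\overline{x},\overline{y})=(\overline{z^2w_0})$ and therefore $\overline{\mathcal{K}}''=(\overline{z^2w_0})$. Finally, $\overline{\mathcal{K}}''$ is principal, generated by the nonzerodivisor $\overline{z^2w_0}$, so $\overline{\mathcal{K}}''\cong B$ is Cohen--Macaulay as a module and $B/\overline{\mathcal{K}}''=B/(\overline{z^2w_0})$ is Cohen--Macaulay; by Krull's principal ideal theorem its height is exactly one.

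I anticipate that the only genuinely delicate point is the structural claim that $B(\varphi'')$ is free of $x,y,z$, which is what collapses the whole computation; the remaining steps are formal consequences of the Cohen--Macaulayness of $B$. One should double-check that the reduction $\mathcal{J}+(x,y)=I_2(B(\varphi''))+(x,y)$ uses only $J\subseteq(x,y)$ and no finer information, and that primeness of $I_2(B(\varphi''))$ persists after adjoining the variable $z$, which it does since $\k[\underline{w}]\hookrightarrow\k[z,\underline{w}]$ is a polynomial extension.
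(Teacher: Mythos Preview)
Your proof is correct and follows essentially the same approach as the paper: both arguments hinge on the observation that $I_2(B(\varphi''))\subseteq\k[\underline{w}]$ in Case~III, and then use the Cohen--Macaulayness of $B$ (hence unmixedness of $(\overline{z^2w_0})$) to conclude that no height-one associated prime of $(\overline{z^2w_0})$ can contain $(\overline{x},\overline{y})$. The only tactical difference is that the paper shows directly that $(\overline{x,y,z^2w_0})$ has height two in $B$ (by checking that any minimal prime over $\mathcal{J}+(x,y,z^2w_0)$ contains either $(x,y,z)+I_2(B(\varphi''))$ or $(x,y,w_0)+I_2(B(\varphi''))$, each of height $n$), whereas you instead identify $(\overline{x},\overline{y})$ as a specific height-one prime of $B$ not containing $\overline{z^2w_0}$; both routes reach the same conclusion with the same ingredients.
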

\begin{proof}
    The ring $B$ is Cohen-Macaulay domain. Thus the $B$-ideal, $(\overline{z^2w_0})$ is Cohen-Macaulay ideal of height one.
    Now we will prove that the $B$-ideal, $(\overline{x,y,z^2w_0})$ is of height two, that is the $S$-ideal, $\mathcal{J}+(x,y,z^2w_0)$ is of height $n$. Let $Q$ be a minimal prime of $\mathcal{J}+(x,y,z^2w_0)$ then $\mathcal{J}+(x,y,z^2w_0) \subseteq Q$, that is, $(x,y,z^2w_0) +I_2(B(\varphi'')) \subseteq Q$. Thus either $(x,y,z)+I_2(B(\varphi'')) \subseteq Q$ or $(x,y,w_0)+I_2(B(\varphi'')) \subseteq Q$. Since $I_2(B(\varphi'')) \subseteq \k[\underline{w}]$ is prime ideal of of height $n-3$ therefore $(x,y,z)+I_2(B(\varphi''))$ and $(x,y,w_0)+I_2(B(\varphi''))$ is of height $n$. Thus $\mathcal{J}+(x,y,z^2w_0)$ is of height $n$ and hence $(\overline{x,y,z^2w_0})$ is of height two.

    Now for any $P \in \Ass{(\overline{z^2w_0})}$, $\hgt P=1$ and this implies $P \notin \Ass{(\overline{x,y,z^2w_0})}$. Thus $(\overline{x,y,z^2w_0})_P=B_P$, for all $P \in \Ass{(\overline{z^2w_0})}$. So, for all $P \in \Ass{(\overline{z^2w_0})}$ we have,
    \begin{align*}
        (\overline{z^2w_0})_P \subseteq (\overline{z^2w_0})_P : (\overline{x,y,z^2w_0})_P=(\overline{z^2w_0})_P:B_P=(\overline{z^2w_0})_P.
    \end{align*}
    Hence, $\overline{\mathcal{K}}''=(\overline{z^2w_0})$ is Cohen-Macaulay ideal of height one.
\end{proof}
    Consider $\mathcal{D}=\frac{\overline{l_{n-1}}{\overline{\mathcal{K}}''}}{\overline{z^2w_0}}$, \Cref{K in case III} implies, $\mathcal{D}$ is well defined $B$-ideal. Also $\overline{l_{n-1}} \in \overline{\mathcal{A}}$, so $\overline{l_{n-1}{\mathcal{K}}}'' \subseteq \overline{\mathcal{A}}$. That is, $\overline{z^2w_0}{\mathcal{D}} \subseteq \overline{\mathcal{A}}$. Since $\overline{\mathcal{A}}$ is the prime ideal and $\overline{z^2w_0} \notin \overline{\mathcal{A}}$ therefore, $\mathcal{D} \subseteq \overline{\mathcal{A}}$.

 \begin{theorem}\label{equality of D and A in case iii}
     With the assumption of \Cref{Main Setting}. The $B$-ideal, $\mathcal{D}$ and $\overline{\mathcal{A}}$ are equal.
 \end{theorem}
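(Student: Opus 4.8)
The plan is to mirror the proof of \Cref{another characterization of defining ideal of Rees algebra}, exploiting the fact that in Case III the ideal $\overline{\mathcal K}''$ is \emph{principal}. First I would record the two ideals in play in a usable form. On one side, \Cref{K in case III} gives $\overline{\mathcal K}''=(\overline{z^2w_0})$, so that
\begin{align*}
\mathcal D=\frac{\overline{l_{n-1}}\,\overline{\mathcal K}''}{\overline{z^2w_0}}=\frac{(\overline{l_{n-1}z^2w_0})}{\overline{z^2w_0}}=(\overline{l_{n-1}}),
\end{align*}
since $B$ is a domain; in particular $\mathcal D$ is a nonzero principal ideal in the Cohen-Macaulay domain $B$ and is therefore unmixed of height one. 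On the other side, since $\mathcal J=J:(x,y)^\infty\subseteq\mathcal L:(x,y)^\infty=\mathcal A$ by \Cref{colon ideal related to defining ideal of Rees algebra}, the ideal $\overline{\mathcal A}=\mathcal A/\mathcal J$ is a height one prime of $B$, and reducing the equality $\mathcal A=\mathcal L:(x,y)^\infty$ modulo $\mathcal J$ (using $\overline J=0$, so $\overline{\mathcal L}=(\overline{l_{n-1}})$, and that $\mathcal A$ is prime with $(x,y)\not\subseteq\mathcal A$) yields $\overline{\mathcal A}=(\overline{l_{n-1}}):\overline{(x,y)}^{\infty}$.

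Since $\mathcal D\subseteq\overline{\mathcal A}$ and both ideals are unmixed of height one, it suffices to prove $\mathcal D_P=\overline{\mathcal A}_P$ for every $P\in\Ass(\mathcal D)$: the module $\overline{\mathcal A}/\mathcal D$ embeds in $B/\mathcal D$, whose associated primes are exactly $\Ass(\mathcal D)$, so vanishing of all these localizations forces $\overline{\mathcal A}=\mathcal D$. The heart of the argument is then to show that \emph{no} $P\in\Ass(\mathcal D)$ contains $\overline{(x,y)}$. Following the observation preceding \Cref{K in case III}, we have $\mathcal J+(x,y)=(x,y)+I_2(B(\varphi''))$, and since $S/(\mathcal J+(x,y))\cong\k[z,\underline w]/I_2(B(\varphi''))$ is a domain of dimension $4$, the ideal $\overline{(x,y)}$ is a prime of $B$ of height one. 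Because the entries $*'$ in the last column of $\varphi$ are homogeneous of degree two without the term $z^2$, they lie in $(x,y)$, whence $l_{n-1}\equiv z^2w_0\pmod{(x,y)}$; and $\overline{z^2w_0}\neq 0$ in $B/\overline{(x,y)}\cong\k[z,\underline w]/I_2(B(\varphi''))$ precisely because $w_0\notin I_2(B(\varphi''))$. Hence $\overline{l_{n-1}}\notin\overline{(x,y)}$, so the unique height one prime containing $\overline{(x,y)}$ is not in $\Ass(\mathcal D)=\Min((\overline{l_{n-1}}))$.

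With this in hand the localization is immediate. For each $P\in\Ass(\mathcal D)$ we have $\overline{(x,y)}\not\subseteq P$; equivalently, writing $\p=P\cap R$, we have $\p\notin V(I_{n-2}(\varphi))=V((x,y))$ by \Cref{minimal prime is unique}, so $I_\p$ is of linear type by \Cref{Lan locally linear type}. Either way $\overline{(x,y)}_P=B_P$, and the saturation defining $\overline{\mathcal A}$ becomes trivial after localizing:
\begin{align*}
\overline{\mathcal A}_P=\big((\overline{l_{n-1}}):\overline{(x,y)}^{\infty}\big)_P=(\overline{l_{n-1}})_P=\mathcal D_P.
\end{align*}
Therefore $\mathcal D=\overline{\mathcal A}$, as claimed.

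I expect the main obstacle to be the verification that $\overline{l_{n-1}}\notin\overline{(x,y)}$, i.e.\ that $w_0\notin I_2(B(\varphi''))$ in Case III; this is the precise input separating Case III from Cases I and II, and it is exactly the content of the height two statement for $\overline{(x,y,z^2w_0)}$ established in \Cref{K in case III}, since $\overline{(x,y,z^2w_0)}=\overline{(x,y)}+(\overline{z^2w_0})$ has height two if and only if $\overline{z^2w_0}\notin\overline{(x,y)}$. Once this component count is secured, every remaining step is a routine localization, and—unlike Cases I and II—no symbolic-power or Gröbner-basis computation is required because $\overline{\mathcal K}''$ is already principal.
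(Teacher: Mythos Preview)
Your proof is correct and, in spirit, is the same ``localize at the associated primes of $\mathcal D$'' argument the paper points to by saying ``similar to \Cref{another characterization of defining ideal of Rees algebra}''. The main difference is that you exploit the principality $\overline{\mathcal K}''=(\overline{z^2w_0})$ right away to identify $\mathcal D=(\overline{l_{n-1}})$, and then reduce everything to the single check $\overline{l_{n-1}}\notin\overline{(x,y)}$, i.e.\ that $\overline{(x,y)}$ is not an associated prime of the principal ideal $(\overline{l_{n-1}})$. The paper's template argument instead splits $P\in\Ass(\mathcal D)$ according to whether $\overline{(x,y,z)}\subseteq P$ or $\overline{(x,y,w_0)}\subseteq P$ and uses the colon duality from \Cref{colon ideal of K} to handle the first case; in Case~III that first case is in fact vacuous (both $\overline{(x,y,z)}$ and $\overline{(x,y,w_0)}$ have height~$2$ in $B$ by the height computation in \Cref{K in case III}), and your direct route makes this vacuity explicit. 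Either way one lands in the ``linear type'' branch and gets $\overline{\mathcal A}_P=(\overline{l_{n-1}})_P=\mathcal D_P$. Your identification of the crux---that $\overline{z^2w_0}\notin\overline{(x,y)}$, equivalently $w_0\notin I_2(B(\varphi''))$, equivalently $\hgt\,\overline{(x,y,z^2w_0)}=2$---is exactly what \Cref{K in case III} supplies, so your citation there is on point.
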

\begin{proof}
 Proof is similar to the proof of \Cref{another characterization of defining ideal of Rees algebra}.   
\end{proof}

\begin{theorem}\label{explicit expression for defining ideal of Rees algebra}
 Assume the \Cref{Main Setting}, then the defining ideal of the Rees algebra, $$\mathcal{A}= \mathcal{L}+ I_3(B(\varphi)).$$
\end{theorem}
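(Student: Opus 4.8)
The plan is to combine the structural isomorphism already established in \Cref{equality of D and A in case iii} with the explicit computation of \Cref{K in case III}, and then to unwind the resulting description back into the ambient ring $S$. By \Cref{equality of D and A in case iii} we have $\overline{\mathcal{A}} = \mathcal{D} = \frac{\overline{l_{n-1}}\,\overline{\mathcal{K}}''}{\overline{z^2w_0}}$ as $B$-ideals, where $B = S/\mathcal{J}$ and $\mathcal{J} = (l_1,\dots,l_{n-2}):(x,y)$. Since \Cref{K in case III} gives $\overline{\mathcal{K}}'' = (\overline{z^2w_0})$, we get $\overline{l_{n-1}}\,\overline{\mathcal{K}}'' = (\overline{l_{n-1}}\cdot\overline{z^2w_0})$; as $B$ is a domain and $\overline{z^2w_0}\neq 0$, dividing the single generator by $\overline{z^2w_0}$ yields $\mathcal{D} = (\overline{l_{n-1}})$. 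Hence $\overline{\mathcal{A}} = (\overline{l_{n-1}})$ in $B$.

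Next I would lift this equality to $S$. Since $\mathcal{J} = (l_1,\dots,l_{n-2}):(x,y) \subseteq (l_1,\dots,l_{n-2}):(x,y)^{\infty} \subseteq \mathcal{L}:(x,y)^{\infty} = \mathcal{A}$, where the last equality is \Cref{colon ideal related to defining ideal of Rees algebra}, the ideal $\mathcal{A}$ contains $\mathcal{J}$, so the image of $\mathcal{A}$ in $B$ is exactly $\mathcal{A}/\mathcal{J} = \overline{\mathcal{A}}$. From $\overline{\mathcal{A}} = (\overline{l_{n-1}})$ together with $\mathcal{J}\subseteq\mathcal{A}$, I obtain $\mathcal{A} = \mathcal{J} + (l_{n-1})$. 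Using the observation $\mathcal{J} = (l_1,\dots,l_{n-2}) + I_2(B(\varphi''))$ and the fact that $(l_1,\dots,l_{n-2}) + (l_{n-1}) = \mathcal{L}$, this gives $\mathcal{A} = \mathcal{L} + I_2(B(\varphi''))$.

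It then remains to identify $I_2(B(\varphi''))$ with $I_3(B(\varphi))$, and here I would exploit the block structure of the Jacobian dual taken with respect to $(x,y,z^2w_0)$. Because each of $l_1,\dots,l_{n-2}$ lies in $(x,y)$ and carries no $z^2w_0$ term, the third row of $B(\varphi)$ vanishes in its first $n-2$ columns, and those columns restrict in the top two rows to $B(\varphi'')$; meanwhile the unique $z^2w_0$ term of $l_{n-1}$, coming from the entry $z^2$ in the last column of $\varphi$, forces the $(3,n-1)$ entry of $B(\varphi)$ to equal $1$. Thus
\[
B(\varphi) = \begin{pmatrix} B(\varphi'') & \ast \\ 0 \ \cdots \ 0 & 1 \end{pmatrix}.
\]
Any $3\times 3$ minor not involving the last column vanishes, since its bottom row is zero, while a $3\times 3$ minor on columns $i<j\leq n-2$ together with the last column expands along the bottom row to exactly the $2\times 2$ minor of $B(\varphi'')$ on columns $i,j$. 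Therefore $I_3(B(\varphi)) = I_2(B(\varphi''))$, and combining with the previous paragraph yields $\mathcal{A} = \mathcal{L} + I_3(B(\varphi))$.

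The main obstacle is the bookkeeping in this last step: one must verify carefully that the chosen Jacobian dual realizes the stated block form, in particular that no $z^2w_0$ contribution is hidden among the first $n-2$ columns and that the $xz$ and $yz$ cross terms appearing in the last column of $\varphi$ are absorbed into the $x$- and $y$-coefficients (the block $\ast$) rather than into the bottom row. Once this normal form is pinned down, the minor computation is routine, and the theorem follows.
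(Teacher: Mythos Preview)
Your proposal is correct and follows essentially the same route as the paper: both invoke \Cref{K in case III} and \Cref{equality of D and A in case iii} to conclude $\overline{\mathcal{A}}=(\overline{l_{n-1}})$ in $B$, lift to $\mathcal{A}=\mathcal{L}+I_2(B(\varphi''))$, and then use the block form of $B(\varphi)$ with respect to $(x,y,z^2w_0)$ to identify $I_2(B(\varphi''))=I_3(B(\varphi))$. Your write-up simply makes explicit the steps that the paper compresses into a single line.
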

\begin{proof}
Follows from \Cref{K in case III}, \Cref{equality of D and A in case iii} and the fact that $I_2(B(\varphi''))=I_3(B(\varphi))$.
\end{proof}

\begin{cor}
    The Rees algebra $\mathcal{R}(I)$ is Cohen-Macaulay.
\end{cor}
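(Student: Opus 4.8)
The plan is to realize the Rees algebra as a hypersurface quotient of the Cohen--Macaulay domain $B$ and then invoke the fact that killing a single regular element preserves Cohen--Macaulayness. All of the genuinely hard computation has already been carried out in \Cref{K in case III} and \Cref{equality of D and A in case iii}; what remains is to assemble it.

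First I would record that $\mathcal{R}(I) \cong S/\mathcal{A}$ and that $\mathcal{J} \subseteq \mathcal{A}$. The containment is immediate: since $(l_1,\dots,l_{n-2}) \subseteq \mathcal{L}$, we have
\[
\mathcal{J} = (l_1,\dots,l_{n-2}):(x,y) \subseteq \mathcal{L}:(x,y)^{\infty} = \mathcal{A}
\]
by \Cref{colon ideal related to defining ideal of Rees algebra}. Consequently $\overline{\mathcal{A}} = \mathcal{A}/\mathcal{J}$ is a genuine ideal of $B = S/\mathcal{J}$, and $\mathcal{R}(I) \cong S/\mathcal{A} \cong B/\overline{\mathcal{A}}$.

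Next I would identify $\overline{\mathcal{A}}$ as a principal ideal. By \Cref{equality of D and A in case iii} we have $\overline{\mathcal{A}} = \mathcal{D} = \overline{l_{n-1}}\,\overline{\mathcal{K}}''/\overline{z^2w_0}$, and by \Cref{K in case III} the ideal $\overline{\mathcal{K}}''$ is the principal ideal $(\overline{z^2w_0})$. Since $B$ is a domain and $\overline{z^2w_0}$ is a nonzero element, dividing $\overline{l_{n-1}}\,(\overline{z^2w_0})$ by $\overline{z^2w_0}$ collapses $\mathcal{D}$ to the principal ideal $(\overline{l_{n-1}})$, so $\overline{\mathcal{A}} = (\overline{l_{n-1}})$. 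Moreover $\overline{l_{n-1}}$ is nonzero in $B$: otherwise $\overline{\mathcal{A}}$ would be the zero ideal, contradicting that $\overline{\mathcal{A}} \cong \overline{\mathcal{K}}''$ has height one. As $B$ is a domain, $\overline{l_{n-1}}$ is therefore a non-zero-divisor.

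Finally I would run the regular-element argument: because $B$ is Cohen--Macaulay of dimension $5$ and $\overline{l_{n-1}}$ is a non-zero-divisor, the quotient $B/(\overline{l_{n-1}})$ is Cohen--Macaulay of dimension $4$, and this quotient is precisely $\mathcal{R}(I)$. The one point that must be checked with care is that $\mathcal{D}$ genuinely reduces to the principal ideal $(\overline{l_{n-1}})$; this is exactly where the special feature of Case III is used, namely that $\overline{\mathcal{K}}'' = (\overline{z^2w_0})$ is principal, in contrast to the more complicated symbolic powers appearing in Cases I and II. Once $\overline{\mathcal{A}}$ is seen to be principal and generated by a regular element, Cohen--Macaulayness of $\mathcal{R}(I)$ is automatic, so there is no substantive obstacle beyond invoking the earlier results.
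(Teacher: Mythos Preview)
Your argument is correct and is exactly the approach the paper's setup is designed to yield: once \Cref{K in case III} gives $\overline{\mathcal{K}}''=(\overline{z^2w_0})$ and \Cref{equality of D and A in case iii} gives $\overline{\mathcal{A}}=\mathcal{D}$, the ideal $\overline{\mathcal{A}}$ collapses to the principal ideal $(\overline{l_{n-1}})$ in the Cohen--Macaulay domain $B$, and quotienting a Cohen--Macaulay ring by a nonzero element of a domain preserves Cohen--Macaulayness. The paper leaves this corollary without proof, and your write-up supplies precisely the intended deduction.
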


\section{examples}\label{section 7 and examples}
In this section, we provide some examples to illustrate for \Cref{presentation matrix},  \Cref{minimal prime is unique}. In all of the following examples, one can use Macaulay2 (\cite{M2} )to check that the ideals $I$ satisfy $\Gs{{2}}$ but not $\Gs{3}$ by using \Cref{Gs and Fitting ideals}.

In the following example, the presentation matrix $\varphi$ is in Case I, and we will compute the defining ideal of the Rees algebra.

\begin{example}\label{example 1}
Let $R=\k[x,y,z]$ and consider the matrix 
\begin{align*}
    \varphi=\begin{bmatrix}
    x & y & z & xz\\
    x & 0 & 0 & x^2\\
    y & x & 0 & y^2\\
    0 & y & x & xy\\
    0 & 0 & y & y^2
    \end{bmatrix}
\end{align*}
Here $n=5$, so we have $I=I_4(\varphi)$. 
The Hilbert-Burch Theorem (see for example \cite[Theorem 20.15]{eisenbud_book}) implies that $I=I_4(\varphi)$ is a height 2 perfect ideal. Also, $\Min(I_3(\varphi))={(x,y)}$ and hence $\hgt I_3(\varphi)=2$.

Let $\mathcal{J}=(l_1,l_2,l_3):(x,y,zw_0)=(l_1,l_2,l_3,w_0^2+w_0w_1-w_2^2+w_0w_3+w_1w_3)$. Define $B=S/{\mathcal{J}}$, then $\overline{\mathcal{K}}^{(2)}=\overline{(z^2w_0^2)}:\overline{(x,y,zw_0)}^{(2)}=\overline{(z^2w_0^2,w_4\a_1-w_3\a_2)}$, where $\a_1=w_0w_4+w_1w_4-w_2w_3, \a_2=w_2w_4-w_0w_3-w_2w_3$. Thus, $\overline{\mathcal{A}}=\frac{\overline{l_4 {\mathcal{K}}}^{(2)}}{\overline{z^2w_0^2}}$.
\end{example}

The following example satisfies the setting of \Cref{subcase five}. The Rees algebra of the ideal in this example is not Cohen-Macaulay.

\begin{example}\label{example 2}
Let $R=\k[x,y,z]$ and consider the matrix 
\begin{align*}
    \varphi=\begin{bmatrix}
    x & y & z & x^2\\
    x & 0 & 0 & xz\\
    y & x & 0 & y^2\\
    0 & y & x & xy\\
    0 & 0 & y & y^2
    \end{bmatrix}
\end{align*}
Here $n=5$, so we have $I=I_4(\varphi)$. 
The Hilbert-Burch Theorem (see for example \cite[Theorem 20.15]{eisenbud_book}) implies that $I=I_4(\varphi)$ is a height 2 perfect ideal. Also, $\Min(I_3(\varphi))={(x,y)}$ and hence $\hgt I_3(\varphi)=2$.

Let $\mathcal{J}=(l_1,l_2,l_3):(x,y,zw_0)=(l_1,l_2,l_3,w_0^2+w_0w_1-w_2^2+w_0w_3+w_1w_3)$. Define, $B=S/{\mathcal{J}}$, then $\overline{\mathcal{K}}'=\overline{(z^2w_0)}:\overline{(x,y,zw_0)}^{(2)}=\overline{(z^2w_0,z\a_1,z\a_2,w_4\a_1-w_3\a_2)}$, where $\a_1=w_0w_4+w_1w_4-w_2w_3, \a_2=w_2w_4-w_0w_3-w_2w_3$. Thus, $\overline{\mathcal{A}}=\frac{\overline{l_4 {\mathcal{K}}}'}{\overline{z^2w_0}}$.
\end{example}

In the following example, the ideal $I$ satisfies the setting of \Cref{K in case III}. The Rees algebra of ideals in this case is Cohen-Macaulay.

\begin{example}\label{example 3}
{\rm
Let $R=\k[x,y,z]$ and consider the matrix 
\begin{align*}
    \varphi=\begin{bmatrix}
    x & x & 0 & z^2\\
    x & 0 & 0 & x^2\\
    y & x & 0 & y^2\\
    0 & y & x & yx\\
    0 & 0 & y & y^2
    \end{bmatrix}
\end{align*}
Here $n=5$, so we have  $I=I_4(\varphi)$. 
The Hilbert-Burch Theorem (see for example \cite[Theorem 20.15]{eisenbud_book}) implies that $I=I_4(\varphi)$ is a height 2 perfect ideal. Also, $\Min(I_3(\varphi))={(x,y)}$ and hence $\hgt I_3(\varphi)=2$. 
Defining ideal $\mathcal{L}$ of Symmetric algebra $\Sym(I)$ is 
\begin{align*}
    \mathcal{L}=(xw_2+(-x-y)w_3+yw_4,xw_1-yw_2,xw_0-yw_1+xw_3-yw_4,z^2w_0-z^2w_1+x^2w_3+yxw_4)
    \end{align*}
    and Jacobian dual of matrix $\varphi$ is
\begin{align*}
B(\varphi)= \begin{pmatrix}
w_0+w_1&w_0+w_2&w_3& xw_1 \\ 
w_2&w_3&w_4&yw_2+xw_3+yw_4\\ 
0 & 0 &0&1 
\end{pmatrix}
\end{align*}
then $I_3(B(\varphi))=(-w_0w_2-w_2^2+w_0w_3+w_1w_3,-w_2w_3+w_0w_4+w_1w_4,-w_3^2+w_0w_4+w_2w_4)$ so generators of $I_3(B(\varphi))$ are in  $\k[w_0,w_1,w_2,w_3,w_4]$ is of degree 2. Therefore defining ideal $\mathcal{A}$ of Rees algebra $\rees(I)$ is 
\begin{align*}
    \mathcal{A}= \mathcal{L}:(x,y)&= \mathcal{L} +I_3(B(\varphi)).
\end{align*}
}
\end{example}

\subsection*{Acknowledgements}
The author would like to thank his advisor, Prof. Vivek Mukundan, for several helpful conversations and insightful comments.

The author is also thankful to the makers of the computer algebra software Macaulay2 \cite{M2}, which helped to verify the examples. 

\bibliographystyle{plain}
\bibliography{refrences}

\end{document}